\numberwithin{equation}{subsection}
\theoremstyle{plain}
\newtheorem{lemma}[equation]{Lemma}
\newtheorem{proposition}[equation]{Proposition}
\newtheorem{theorem}[equation]{Theorem}
\newtheorem{corollary}[equation]{Corollary}
\theoremstyle{definition}
\newtheorem{definition}[equation]{Definition}
\newtheorem{remark}[equation]{Remark}
\newtheorem{examples}[equation]{Examples}
\newtheorem{notation}[equation]{Notation}
\newtheorem*{proposition*}{Proposition}
\newtheorem*{lemma*}{Lemma}
\definecolor{brown}{RGB}{150,100,0}
\definecolor{purple}{RGB}{150,0,100}
\definecolor{grey}{RGB}{128,128,128}
\newcommand{\R}{{\mathbb R}}
\renewcommand{\O}{{\mathbb O}}
\renewcommand{\H}{{\mathbb H}}
\newcommand{\Ll}{{\mathcal L}}    
\newcommand{\Rr}{{\mathcal R}}
\newcommand{\Om}{{\Omega}}
\newcommand{\om}{{\omega}}
\newcommand{\so}{{\mathfrak{so}}}
\newcommand{\g}{{\mathfrak g}}
\renewcommand{\Im} {{\mathrm {Im }\, }}
\newcommand{\vol}{{\mathrm {vol}}}
\newcommand{\Hom}{{\mathrm{Hom}}}
\def\NABLA#1{{\mathop{\nabla\kern-.5ex\lower1ex\hbox{$#1$}}}}
\def\Nabla#1{\nabla\kern-.5ex{}_#1}
\newcommand{\la}{\langle}
\newcommand{\ra}{\rangle}
\newcommand{\GL}{{\rm GL}}
\newcommand{\SO}{{\rm SO}}
\newcommand{\Sp}{{\rm Sp}}
\newcommand{\G}{{\mathrm G_2}}
\newcommand{\Spin}{{\rm Spin}}
\DeclareMathOperator{\Stab}{Stab}
\newcommand{\ver}{\mathrm{vert}}
\newcommand{\hor}{\mathrm{hor}}
\DeclareMathOperator{\Id}{Id}
\begin{document}
	\title[CR-twistor spaces  over manifolds with $\G$  -and $\Spin(7)$-structures]{CR-twistor spaces   over manifolds with $\G$  -and $\Spin(7)$-structures}
	
	\author{Domenico Fiorenza}
	\address{
		Dipartimento di Matematica ``Guido Castelnuovo'',
		Universit\`a   di Roma ``La Sapienza",
		Piazzale Aldo Moro 2, 00185 Roma, 
		Italy}
	\email{fiorenza@mat.uniroma1.it} 
	
	\author{H\^ong V\^an L\^e }
	\address{Institute  of Mathematics of the Czech Academy of Sciences,
		Zitna 25, 11567  Praha 1, Czech Republic}
		\email{hvle@math.cas.cz}
		\thanks{Research  of HVL was supported  by  GA\v CR-project 22-00091S  and
		 RVO: 67985840}
	\date{\today}

\keywords{vector cross product,  formally  integrable CR-structure, torsion-free  $\G$ -and $\Spin(7)$-structure, metric of constant curvature, Fr\"olicher--Nijenhuis bracket, invariant algebraic curvature}
\subjclass[2020]{Primary:53C28, Secondary:53C10, 22E45}

\begin{abstract}  In 1984  LeBrun constructed  a  CR-twistor  space  over  an arbitrary conformal Riemannian  3-manifold and proved that  the  CR-structure  is  formally integrable.   This   twistor  construction  has been    generalized by Rossi in 1985  for  $m$-dimensional Riemannian  manifolds endowed with a $(m-1)$-fold  vector cross product (VCP). In 2011 Verbitsky   generalized    LeBrun's construction   of   twistor-spaces     to   $7$-manifolds  endowed  with    a $\G$-structure.  In this paper we unify    and generalize     LeBrun's, Rossi's  and  Verbitsky's   construction of a CR-twistor  space to the case    where   a  Riemannian  manifold  $(M, g)$      has  a  VCP  structure. We show  that the  formal	 integrability of the CR-structure is expressed  in terms  of  a torsion tensor  on   the  twistor space, which  is a  Grassmanian bundle over $(M, g)$.  If  the VCP structure on $(M,g)$ is generated by a  $\G$- or $\Spin(7)$-structure,  then the vertical component of  the  torsion tensor  vanishes  if and only if  $(M, g)$ has constant curvature,  and the horizontal component  vanishes   if  and only  if $(M,g)$  is a  torsion-free $\G$ or $\Spin(7)$-manifold. Finally  we discuss some open problems.	 
 \end{abstract}

 \maketitle
  
\tableofcontents

\section{Introduction}\label{sec:intr}

\subsection{Motivations and prior  works}\label{subs:motivations}
  In  his papers \cite{BG1967}, \cite{Gray1969},  motivated  by Calabi's work on   almost complex structures  on $S^6$, Gray introduced the notion  of  a vector cross product (VCP for short) structure.
By definition, a $r$-fold VCP structure $\chi$  on an Euclidean vector space $(V,\la   \cdot, \cdot \ra)$ is a multilinear alternating map
	 \[
	 \chi\colon \bigwedge^{r}V \to V
	 \]
such that 	 
$$	\la \chi(v_1, \cdots, v_r), v_i  \ra = 0 \text{  for }   1\le i \le r,$$
$$	\la  \chi(v_1, \cdots, v_r), \chi(v_1, \cdots , v_r) \ra = \| v_1 \wedge \cdots \wedge v_r\|^2,$$
where $\| \cdot \|$ is  the induced metric  on $\wedge ^r V$.   
For a   $r$-fold VCP $\chi$ on $V$,  the associated  VCP-form $\varphi_\chi\colon \bigwedge^{r+1}V\to \mathbb{R}$  is defined as 
\begin{equation}\label{eq:varphi-chi}
\varphi_\chi   (v_1, \cdots,  v_{r+1}) = \la \chi(v_1, \cdots, v_r), v_{r+1}\ra,
\end{equation}
\cite[(4.1)]{Gray1969}. As a matter of notation, once a $r$-fold VCP is fixed one often writes $v_1\times v_2\times\cdots\times v_r$ for $\chi(v_1,\dots,v_r)$.
	 
\begin{remark}\label{rem:BG1}
(1) The Brown-Gray  classification \cite{BG1967} asserts  that a     $r$-fold  VCP  structure    exists  on $\R^m$ if and  only if one of the following  possibilities holds  (i) $r=1$ and  $ m$ is even;
	(ii)  $r =  m-1$;  (iii) $ r = 2$ and $ m = 7$;   (iv)  $ r =3$  and  $m = 8$.

(2) A $(m-1)$-fold  VCP   structure  $\chi$  on $(\R^m, \la \cdot, \cdot \ra)$ is defined   uniquely  by     a  given orientation on $\R^m$.

(3)	For $m =7$, $r=2$      the  VCP form $\varphi_\chi$ is called  the  {\it  associative 3-form}.  Its stabilizer in $\GL (\R^7)$ is   the  exceptional group  $\G\subset  \SO (7)$.  For
	$m = 8$, $r = 3$ the VCP form  $\varphi_\chi$ is called   the {\it Cayley  4-form}. Its  stabilizer  in $\GL (\R^8)$  is the subgroup  $\Spin(7) \subset \SO (8)$.
The   VCP  structures  $(\chi, g)$  in these  cases   are in  a 1-1 
 correspondence  with    their   VCP-forms  $\varphi_\chi$.    Given  a VCP form $\varphi$  on  a 7-manifold       an explicit  formula   for $g_\varphi$  is given  in  \cite[\S 7.1]{Hitchin2000}. Similarly,  given  a  VCP form $\varphi$  on  a 8-manifold,       a  formula   for $g_\varphi$       can be obtained   using     the relation $\varphi ^2 = 8 \vol_{g_\varphi}$  and  Hitchin's method, see  similar results in   \cite[\S 3]{LPV2008}.
\end{remark}

One has an immediate notion of a $r$-fold VCP on a Riemannian  manifold $(M, g)$ as  a smooth $TM$-valued   $r$-form $\chi \in \Om ^r(M, TM)$ such that $\chi(x)$ is a $r$-fold  VCP  on $T_xM$ for all $x \in M$.  The corresponding VCP-form will therefore be an element in  $\Om^{r+1} (M)$.

 \begin{remark}\label{rem:BG3}
 A VCP form  $\varphi_\chi$ is parallel w.r.t.  the Levi-Civita  connection $\nabla ^{LC}_g$  iff  either $(M^m, g)$ is  an orientable    Riemannian  manifold  and  $r = m-1$; or
$m =2 n$,
$(M^{2n}, g)$ is a K\"ahler manifold  and $r =1$; or    $m = 7$  and $(M^7, g)$ is a  torsion-free
$G_2$-manifold and $r =2$; or $m =8$  and  $(M^8, g)$  is a torsion-free   Spin(7)-manifold  and $r =3$. This result singles out 
K\"ahler  manifolds, torsion-free $G_2$-and Spin(7)-manifolds  as important  classes   of Riemannian manifolds with special holonomy \cite{Joyce2000}. Not unrelatedly,  these classes play  a prominent  role in  calibrated geometry, string theory and M-theory,  and  F-theory \cite{Joyce2007}, \cite{GS2002}, \cite{BGP2014}. 
\end{remark}

 \begin{remark}\label{rem:BG4}  The    VCPs  in dimension 3, 7, 8   can be expressed  in terms  of algebraic operations on normed  algebras. Denote by $\Im  \O$ the imaginary   part  of the  octonion algebra $\O$. Harvey and Lawson noticed   that, identifying $\R^7$ with  $\Im \O$,  the      associative 3-form $\varphi_\chi$ on $\Im \O$   has the following form
\cite[(1.1), p. 113]{HL1982}:
	\begin{equation}\label{eq:varphi}
	\varphi_\chi(x, y, z) =       \la  x, yz \ra.
	\end{equation}
Hence   the    2-fold  VCP  $\chi$ on $\Im  \O$ is defined as follows \cite[Definition B.1, p. 145]{HL1982} 
 \begin{equation}\label{eq:vcp2}
  y \times z = \Im  (yz ).
 \end{equation}
 The restriction  of this  2-fold     VCP  to  $\Im  \H \subset \Im  \O$  coincides  with the   2-fold
 VCP on $\R^3$  \cite[p. 145]{HL1982}.
 
   The  3-fold VCP  on $\R^ 8 = \O$   can be expressed as follows \cite[Definition B.3, p. 145]{HL1982}: 
\begin{equation}\label{eq:vcp3}
  u \times v \times  w =  {1\over 2}\left(( u \bar v) w - (w \bar v)u\right). 
  \end{equation}

\end{remark}

The relation between  complex  structures  and VCP  structures     has been manifested  also   via  CR-twistor spaces  over    manifolds  endowed with a  VCP structure.
 In 1984  LeBrun constructed  a  CR-twistor  space  over  an arbitrary conformal Riemannian  3-manifold \cite{LeBrun1984}.
 LeBrun   proved that the CR-twistor space of a conformal Riemmannian 3-manifold is  a  CR-manifold,   i.e. the  CR-structure  is integrable.   This   twistor  construction  has been    generalized by Rossi in 1985  for  $m$-dimensional Riemannian  manifolds endowed with a $(m-1)$-fold VCP \cite{Rossi1985}   and  utilized further  by LeBrun  for his proof  of the  formal  integrability  of the  almost complex structure $J$
 on     the higher dimensional  loop space over    a Riemannian manifold   $(M^m, g)$ endowed with  a $(m-1)$-fold VCP \cite{LeBrun1993}, following  a similar      proof by Lempert  for  the weak  integrability  of  the almost complex structure on the  loop space over a     Riemannian 3-manifold \cite{Lempert1993}.
 In 2011 Verbitsky   generalized    LeBrun's construction   of   twistor-spaces     to   $7$-manifolds  endowed  with    the VCP 3-forms $\varphi$ \cite{Verbitsky2011},  which subsequently has been used by him for    his proof of the formal integrability of the  almost complex  structure on  the loop space over   a  holonomy $\G$-manifold \cite{Verbitsky2010}. 
 
\subsection{Our main results}\label{subsec:main}

 As a first result in this   paper,  we      unify    and generalize     LeBrun's, Rossi's construction  of a CR-twistor
 space  over   a  conformal  Riemannian manfold  in dimension 3  and in arbitrary dimension respectively,  as well as  Verbitsky's   construction of a CR-twistor  space over  a $\G$-manifold to the case    when   the underlying   Riemannian  manifold  $(M, g)$      has  a  VCP  structure, see Definition  \ref{def:twistor}  below. In order to state the result we need fixing notation.

\begin{notation} 
Let $(M, g)$ be an oriented  Riemannian manifold.  

$\bullet$  We denote  by $\mathbb{Gr}^+ (r-1,M)$ the  Grassmannian  of
oriented  $(r-1)$-planes  in $TM$,  which  we  shall  identify  with  decomposable unit  $(r-1)$-vectors in $\bigwedge ^{r-1}TM$. When no confusion is possible we will denote $\mathbb{Gr}^+ (r-1,M)$ simply by $\mathbb{G}$.  We denote by $\pi:  \bigwedge ^{r-1} TM \to M$ the natural projection, which also induces    the natural projection $\pi:  {\mathbb{G}} \to M$.  For any point $v\in  {\mathbb{G}}$, the fiber of $\pi:  {\mathbb{G}} \to M$ through $v$ is naturally identified with the Grassmannian ${\mathbb{Gr}}^+ ({r-1}, T_{ \pi (v)} M)$ of oriented  $(r-1)$-planes  in $T_{ \pi (v)} M$.

$\bullet$ For $v \in  {\mathbb{G}}$ we denote by  $E_v\subseteq T_{\pi (v)}M$  the oriented ${(r-1)}$-plane  associated
to $v$  and  by  $E_v ^\perp$ its orthogonal complement  in  $T_{\pi (v)}M$.
\end{notation}
The  Riemannian metric $g$   induces a natural  Riemannian metric  on   the vector bundle $\bigwedge  ^{{r-1}} TM \stackrel{\pi}{\to}  M$      and so endows  $\bigwedge   ^{r-1} TM$  with   the corresponding Levi-Civita  connection $\nabla ^{LC}$. This induces,   for any $ v \in \bigwedge^{r-1} TM$,  a direct sum decomposition  
\begin{equation}\label{eq:hor1}
T_v (\wedge ^{r-1}TM) =  \wedge ^ {r-1} T_{\pi (v)}M \oplus  T_v^{\mathrm{hor}}(\wedge ^{r-1} TM), 
\end{equation}
where  
$$ T_v ^{\mathrm{hor}}(\wedge ^{r-1}TM)  \cong  T_{\pi (v)} M$$
is the horizontal  distribution in $T\wedge ^{r-1} TM$ w.r.t.      $\nabla ^{LC}$. 
Since $\mathbb{Gr} ^+ ({r-1}, TM)$  is a     fiber sub-bundle  of the vector bundle  $\bigwedge^{r-1} TM \stackrel{\pi}{\to} M$,  for $ v \in {\mathbb{G}}$  the orthogonal decomposition   \eqref{eq:hor1}   induces  the decomposition
\begin{equation}\label{eq:hor2}
T_v{\mathbb{G}} =   T_v ^{\mathrm{vert}}{\mathbb{G}} \oplus^\perp   T_v ^{\mathrm{hor}}{\mathbb{G}},
\end{equation}
where
\begin{equation}\label{eq:vert}
T_v ^{\mathrm{vert}}{\mathbb{G}} =  T_v  \mathbb{Gr}^+ ({r-1}, T_{ \pi (v)} M)
\end{equation}
and 
\begin{equation}\label{eq:hor3}
T_v ^{\mathrm{hor}}{\mathbb{G}}  =  T_v ^{\mathrm{hor}}(\wedge ^{r-1} TM) \cong T_{\pi (v)} M.
\end{equation}
 \begin{notation}Let $(M^m,g)$ be an $m$-dimensional Riemannian manifold. We denote by $B$ the rank $m-r+1$ distribution on ${\mathbb{G}}$  
  defined  at a  point $v$ of ${\mathbb{G}}$ by
\begin{equation}\label{eq:B}
B_v: = \{ w \in T_v ^{\mathrm{hor}}{\mathbb{G}}|\:    d\pi_v(w) \in E_v ^{\perp}  \subset  T_{\pi (v)} M\}\subseteq T_v ^{\mathrm{hor}}{\mathbb{G}}.
\end{equation}
An $r$-fold VCP structure $\chi$ on $(M, g)$ endows the vector spaces $E_v^\perp$ with a complex structure $J_{E_v^\perp}$ defined by
\begin{equation}\label{eq:Jev}
J_{E_v^\perp}(z)=\chi(v\wedge z),
\end{equation}
for $z\in E_v^\perp$, see \cite[Lemma 3.1]{FL2021}, \cite[p. 146]{LL2007},  \cite[Theorem 2.6]{Gray1969}. 
Since $d\pi_v: B_v \to E_v ^\perp$ is an isometry, the complex structure $J_{E_v^\perp}$ induces a complex structure $J_{g, \chi}$ on 
 $B_v$. It is defined by the equation
\begin{equation}\label{eq:BJv}
 d\pi_v (J_{g, \chi}  (w))  =  J_{E_v^\perp}( d\pi_v  (w)).
 \end{equation}
 \end{notation}
  
 \begin{definition} (cf. \cite[Definitions 1.1, 1.2,  p. 3]{DT2006})\label{def:CR}
An {\it almost CR-structure} on a manifold $N$ is a pair $(B,J_B)$ consisting of a distribution $B\subseteq TN$ and of an almost complex structure $J_B$ on $B$.  The triple $(N,B,J_B)$ is called an \emph{almost CR-manifold}.
An almost   CR-structure $(B,J_B)$ on  a  manifold $N$ is said to be {\it formally
integrable} if the complex distribution $B^{1,0}\subset B\otimes\mathbb{C}$  is involutive, i.e., $[B^{1,0},B^{1,0}]\subseteq B^{1,0}$. If $(B,J_B)$ is integrable, then the almost CR-manifold $(N,B,J_B)$ is called a {\it CR-manifold}.
\end{definition}

\begin{remark}\label{rem:integrabiliy-conditions}
 The condition that the  almost CR-structure $(B,J_B)$ is formally integrable can be stated completely in terms of sections of the real vector bundle $B$, without going through its complexification, as follows. Denote by  $\Pi_B$ the orthogonal projection  of $TGr^+ (r-1,  M)$  to   $B$ and  by $\Gamma (B)$ the space  of smooth sections   of $B$. 
\begin{enumerate}
\item \label{CR1} For any $X,Y\in \Gamma(B)$ one has $[J_BX,J_BY]-[X,Y]\in  \Gamma(B)$;

\item \label{CR2} For any $X,Y\in \Gamma(B)$ one has 
\[
 \Pi_B ([J_BX,J_BY]-[X,Y])-J_B\circ \Pi _B([X,J_BY]+[J_BX,Y])=0.
\]
\end{enumerate}

In  literature \cite[p. 128]{bejancu}, \cite[p. 4]{DT2006}    the condition  \eqref{CR2}   is   replaced  by the  following condition
$$	[J_BX,J_BY]-[X,Y]-J_B([X,J_BY]+[J_BX,Y])=0,$$
which has meaning  only  if the condition \eqref{CR1} holds. Clearly     the conditions  \eqref{CR1} and \eqref{CR2}  are equivalent  to the condition \eqref{CR1}  and  the  classical  condition  stated above.

We shall call  the  condition \eqref{CR1}  \emph{the first CR-integrability  condition}, and the  condition \eqref{CR2} \emph{the second    CR-integrability condition}.
In Cartan geometry,  the  condition  \eqref{CR1}  is also called  the {\it partial integability} of a CR-structure \cite[p. 443]{CS09}.
\end{remark}

Now  we    associate  to each  VCP-structure on a Riemannian  manifold $(M, g)$  an almost CR-manifold as follows.

\begin{definition}\label{def:twistor}  Let $(M, g, \chi)$  be a Riemannian manifold  endowed with a VCP structure $\chi$.   The almost CR-manifold $({\mathbb{G}}, B,J_{g,\chi})$  consisting of the manifold ${\mathbb{G}}$ together with the almost CR-structure given by the distribution  $B$ and the  almost complex  structure   $J_{g,\chi}$  on $B$ defined in Equations (\ref{eq:B}) and \eqref{eq:BJv}  will be called the {\it CR-twistor  space  over $(M, g, \chi)$}.	
\end{definition}    

\begin{examples}\label{ex:CRVCP}
(1) Let   $(\chi,g)$ be  a  1-fold VCP on a  smooth manifold  $M^{2n}$.  This is equivalently an Hermitian almost complex structure on $M$. In this case one has
${\mathbb{G}} = M^{2n}$,  the horizontal distribution $B$  is identified with the tangent bundle to  $M^{2n}$ and the  almost complex  structure $J_{g, \chi}$  is identified with the almost complex structure on $M$.

(2) Let  $(\chi, g)$  be a  $(m-1)$-fold    VCP  on  an oriented  manifold $M^m$. Then the   distribution  $B$ on ${\mathbb{G}}$ is  2-dimensional and the   CR-twistor    structure  $({\mathbb{G}}, B, J_{g, \chi}) $  coincides  with  the one constructed  by LeBrun \cite{LeBrun1984}  and  extended by  Rossi \cite{Rossi1985}.

(3) Let  $\chi$  be   a 2-fold  VCP on $(M^7,g)$.  Then    the   CR-twistor structure  on ${\mathbb{G}}$ coincides with   the one constructed  by  Verbitsky in \cite{Verbitsky2011}.
\end{examples}

 Our main result  in this  paper  concerns necessary and sufficient  conditions for the first and the second CR-integrability of the CR-twistor  space  over  a
 Riemannian manifold  $(M, g)$  endowed with  a VCP structure.    Let $\nabla_g ^{LC}$ denote  the  Levi-Civita  connection on $(M, g)$. We say that the VCP $\chi$ is parallel if $\nabla_g ^{LC}\chi=0$.
 
 \

In this   paper  we  prove the following

\begin{theorem}[Main Theorem]\label{thm:main}	
	Let  $\chi\in \Om^{r+1}(M, TM)$ be a VCP structure on a Riemannian manifold $(M, g)$ and  $({\mathbb{G}}, B, J_{g, \chi})$   the associated     CR-twistor  space.  Then  there  exists  a tensor  $T \in  \Gamma (\wedge ^2 B^*\otimes  T{\mathbb{G}})$ on the total space ${\mathbb{G}}$  such   that
	
	\begin{enumerate}
		\item  The first  CR-integrability \eqref{CR1} holds if and only if  for any  $ v \in {\mathbb{G}}$ and $X, Y \in B (v)$ we have   $T^\ver (X, Y) =0\in T^\ver {\mathbb{G}}$.
		
		\item If $(r,m)=(1,2n)$ or $(m-1,m)$ then  $T^\ver = 0$   for any  $(M, g, \chi)$.

		\item If $(r,m)=(2,7)$ or $(3,8)$ then  $T^\ver  = 0$ 
		if   and only if $(M, g)$    has constant  curvature.
	
		\item	 The  second  CR-integrability \eqref{CR2} holds, if and only if    for any  $ v \in {\mathbb{G}}$ and $X, Y \in B (v)$ we have   $T^\hor (X, Y) =0\in  T^\hor {\mathbb{G}}$.

		\item If $(r,m)=(1,2n)$ then  $T^\hor =0$  for $(M, g, \chi)$ 
		if and only if  $\chi$ is integrable.

		\item If $(r,m)=(m-1,m)$ then $T^\hor  = 0$  for any  $(M, g, \chi)$. 
		
		\item If $(r,m)=(2,7)$  or    $(r, m)  =  (3, 8)$ then  $T^\hor = 0$  
		if and only  if  $\chi$ is parallel.
	\end{enumerate}	
\end{theorem}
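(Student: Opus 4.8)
The plan is to construct the torsion tensor $T$ explicitly and then extract its vertical and horizontal components separately, matching them against the two CR-integrability conditions from Remark \ref{rem:integrabiliy-conditions}. First I would define $T$ so that for vector fields $X,Y\in\Gamma(B)$ the combination $[J_{g,\chi}X,J_{g,\chi}Y]-[X,Y]-J_{g,\chi}\Pi_B([X,J_{g,\chi}Y]+[J_{g,\chi}X,Y])$ is tensorial in $X$ and $Y$ and depends only on their values at $v$; the standard check is that the $C^\infty$-bilinearity obstructions cancel because of the almost-complex relation $J_{g,\chi}^2=-\Id$ on $B$. This gives a well-defined element of $\Gamma(\wedge^2 B^*\otimes T\mathbb{G})$. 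Decomposing $T=T^{\ver}+T^{\hor}$ via \eqref{eq:hor2}, conditions \eqref{CR1} and \eqref{CR2} translate exactly into the vanishing of $T^{\ver}$ and $T^{\hor}$ respectively: the first integrability condition measures whether the bracket stays inside $B$ (equivalently inside the horizontal distribution, so its failure is the vertical part $T^{\ver}$), while the second condition is precisely the horizontal, $J$-compatibility statement, so parts (1) and (4) are essentially bookkeeping once $T$ is set up correctly.

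For the computational heart, I would compute the Lie brackets of horizontal lifts of sections of $B$ in terms of the Levi-Civita connection on $\wedge^{r-1}TM$. The key tool is the structure equation for the horizontal distribution: the bracket of two horizontal lifts has a horizontal part governed by covariant derivatives of $\chi$ and $J_{E_v^\perp}$, and a vertical part governed by the curvature $R$ of $(M,g)$ acting on the $(r-1)$-plane $E_v$. Concretely, $T^{\ver}(X,Y)$ at $v$ should reduce to a curvature expression of the form $R(d\pi X, d\pi Y)$ projected onto $T_v^{\ver}\mathbb{G}=T_v\mathbb{Gr}^+(r-1,E_v)$, using \eqref{eq:Jev}, while $T^{\hor}(X,Y)$ should reduce to an expression in $\nabla^{LC}\chi$ paired against $J_{E_v^\perp}$. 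Once these two normal forms are obtained, parts (2)--(3) and (5)--(7) become representation-theoretic statements about when these specific tensorial expressions vanish.

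The case analysis then proceeds by dimension. For part (2), when $r=1$ the space $\mathbb{G}=M$ has no vertical directions, and when $r=m-1$ the fiber $\mathbb{Gr}^+(m-2,T_xM)$ together with the curvature projection produces a vertical component that is forced to vanish for purely dimensional/orientation reasons (the normal bundle to $E_v$ is a line, so the relevant curvature projection is trivial); this matches the LeBrun--Rossi setting where $B$ is $2$-dimensional. For part (3), in the $\G$ and $\Spin(7)$ cases the vertical torsion is, up to a nonzero factor, the projection of the curvature operator onto the traceless part of the action on $E_v^\perp$, and its vanishing for all $v$ is equivalent to the curvature operator being a multiple of the identity on every such plane, i.e.\ constant sectional curvature; here I would invoke an invariant-theoretic lemma (an ``invariant algebraic curvature'' argument, as flagged in the keywords) identifying which curvature components survive the $\G$- or $\Spin(7)$-equivariant projection. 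Symmetrically, for parts (5)--(7) the horizontal torsion is built from $\nabla^{LC}\chi$: when $r=1$ its vanishing is integrability of the almost complex structure (the Nijenhuis tensor reappears, recovering Example \ref{ex:CRVCP}(1)); when $r=m-1$ the full parallelism of a top VCP is automatic on any oriented manifold by Remark \ref{rem:BG3}, forcing $T^{\hor}=0$; and in the $\G$/$\Spin(7)$ cases $T^{\hor}=0$ is equivalent to $\nabla^{LC}\chi=0$, i.e.\ parallelism of $\chi$, again by an equivariant decomposition of the torsion of the $G$-structure into irreducibles.

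The main obstacle will be the explicit derivation of the two normal forms for $T^{\ver}$ and $T^{\hor}$ in terms of $R$ and $\nabla^{LC}\chi$, since this requires carefully tracking how the complex structure $J_{E_v^\perp}=\chi(v\wedge\,\cdot\,)$ varies as $v$ moves both vertically (within a fiber) and horizontally (via parallel transport), and the two contributions interact through the identity $J_{E_v^\perp}^2=-\Id$ and its covariant derivative. I expect the bookkeeping to be delicate precisely where the vertical variation of $J$ meets the curvature term, and the cleanest route is probably to choose, at a fixed $v$, a local orthonormal frame adapted to the splitting $T_{\pi(v)}M=E_v\oplus E_v^\perp$ that is parallel along radial geodesics, so that first-order terms in $\nabla^{LC}$ drop out and only the genuinely tensorial curvature and $\nabla\chi$ contributions remain. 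After that reduction, the equivariance arguments in parts (3) and (7) are the only conceptually nontrivial steps, and they hinge on the Brown--Gray rigidity of the $\G$- and $\Spin(7)$-forms recorded in Remark \ref{rem:BG1}.
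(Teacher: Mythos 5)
Your scaffolding (define $T$, match $T^\ver$ with \eqref{CR1} and $T^\hor$ with \eqref{CR2}, reduce the vertical part to curvature and the horizontal part to $\nabla^{LC}\varphi_\chi$ via radially parallel adapted frames) is the paper's architecture, but two of your steps have genuine gaps. First, the equivalence in part (1) is not ``bookkeeping'', and your parenthetical ``whether the bracket stays inside $B$ (equivalently inside the horizontal distribution)'' is false for $r\geq 2$: $B$ is a \emph{proper} subbundle of $T^{\mathrm{hor}}{\mathbb{G}}$ of corank $r-1$ (horizontal vectors projecting into $E_v$ lie outside $B$). Hence vanishing of the vertical part of $[J_{g,\chi}X,J_{g,\chi}Y]-[X,Y]$ does not by itself put this bracket in $\Gamma(B)$; one needs the nontrivial fact that for $X,Y\in\Gamma(B)$ the bracket $[X,Y]$ has \emph{no} horizontal component along $E_v$-directions, i.e.\ $[X,Y]\in\Gamma(B\oplus^\perp T^{\mathrm{vert}}{\mathbb{G}})$ (Lemma \ref{lem:bracket-in-B-plus-Tvert}), which the paper proves by identifying $E^*=\mathrm{Ann}(E^\perp)$, pulling back the Liouville $1$-form of $T^*M$ along sections horizontal at $v$, and using that the canonical symplectic form vanishes on pairs of horizontal vectors (Lemmas \ref{lemma:annihilator}, \ref{lem:lem21ver}). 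Your proposal has no substitute for this step. Relatedly, your justification of part (2) in the $(m-1,m)$ case is wrong in detail: $E_v^\perp$ is a $2$-plane, not a line, and the projection $\Pi_{\so(E_v^\perp)}R$ is not trivial; what kills $T^\ver$ is that $\so(E_v^\perp)\cong\so(2)$ is abelian, so the commutator condition \eqref{eq:jinv3a} holds automatically.

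Second, and more seriously, the computational heart of parts (3) and (7) is exactly what you defer to ``an invariant-theoretic lemma'' and ``an equivariant decomposition into irreducibles'', and no such off-the-shelf lemma is available. Your derived normal forms agree with the paper's (the commutator condition $[\Pi_{\so(E_v^\perp)}R(w_3,w_4),J_{E_v^\perp}]=0$ and the linear system \eqref{eq:with-wn} on $\nabla\varphi_\chi$), but proving that the only algebraic curvatures satisfying the first system are multiples of $R^{\rm Id}$, and that the $49$-dimensional space of algebraic intrinsic torsions satisfying the second is zero, is the hard content: the paper establishes $\dim\mathcal{AC}_{CR1}(V,\chi)=1$ in the $(2,7)$ case by a Monte Carlo computer-algebra rank computation (Proposition \ref{prop:cr17}), handles $(3,8)$ by a reduction to $(2,7)$ plus an explicit octonionic computation exploiting transitivity of $\Stab_{\Spin(7)}(i\wedge j)$ on $S^1\times S^5$ (Proposition \ref{prop:red2}), and proves $\dim\mathcal{T}_{CR2}(V)=0$ again by computer algebra (Lemma \ref{lemma:dimensions-T}); the authors explicitly flag purely representation-theoretic proofs of both facts as open problems (Conclusions (3), Remark \ref{rem:fg7}). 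The invariance of $\mathcal{AC}_{CR1}$ and $\mathcal{T}_{CR2}$ under $\G$ or $\Spin(7)$ only tells you these spaces are sums of irreducibles; deciding \emph{which} irreducibles survive still requires the concrete linear-algebra verification you omit, and Remark \ref{rem:verprob} shows this is a real pitfall: Verbitsky's plausible-looking invariance claim (that the condition is equivalent to $R$ taking values in $\g_2$) is incorrect. Also note that your appeal to ``Brown--Gray rigidity recorded in Remark \ref{rem:BG1}'' carries no force here, since that remark is merely the classification of VCPs. As it stands, your proposal assumes the conclusion at the only two points where the theorem is genuinely difficult.
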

\begin{remark}\label{rem:parts-5-6} 
Parts (2\&5) and  (2\&6) of the main theorem above combined,  i.e., without decomposing the CR integrability condition into two independent conditions, are classical and we are including them only for completeness. 
In particular, by combining Example \ref{ex:CRVCP} (1) with Example \ref{ex:CR1LeBrun}(1) we recover that  a  Riemannian manifold $(M^{2n}, g)$  endowed with a 1-fold  VCP $\chi$ is
 a CR-manifold if and only if the almost complex structure on $M$  induced by $\chi$ is integrable.
Part (6) is due to  LeBrun, who proved that the   CR-twistor  space over  a  Riemannian manifold $(M^m, g)$  with a $(m-1)$ fold  VCP  $\chi$  is  always  a CR-manifold \cite{LeBrun1984}, \cite{LeBrun1993}. Note that in this case  $\chi$ is always parallel, see \cite[Proposition 4.5]{Gray1969}. 
Part (7) for the case $(2,7)$ is due to
Verbitsky \cite{Verbitsky2011}. Unfortunately his  proof  uses  a  wrong argument, see  Remark  \ref{rem:verprob}. 
Finally, it was also known that the   CR-twistor  space over any   flat  Riemannian manifold $(M, g)$ endowed  with  parallel  VCP is a  CR-manifold.
\end{remark}

\subsection{Organization of  our paper}\label{subs:org}
 In the second section   we   study   the  first  condition \eqref{CR1}   for the formal integrability of  the CR-twistor space
over  a Riemannian  manifold  $(M, g)$ endowed   with  a VCP-structure $\chi$. First, using  a geometric characterization   of   the     distribution $B$
 (Lemma \ref{lemma:annihilator}),   we express the condition \eqref{CR1} for  the CR-twistor space over $(M, g, \chi)$     in terms of the vertical components of the Lie brackets $[J_BX,J_BY]$  and $[X,Y]$,  where $X, Y \in \Gamma (B)$,  with respect to  the  decomposition \eqref{eq:hor2} (Corollary \ref{cor:cond2}). Using this,  we   prove  that  the first  CR-integrability condition  \eqref{CR1}  for   the CR-twistor   space over  $(M, g, \chi)$  holds if and only if   the  curvature  $R(g)$   of  the  underlying Riemannian manifold  $(M, g)$  is   a solution of 
an infinite system   of linear   equations   (Proposition  \ref{prop:ver32}).
  Next,   we  study  the first CR-integrability  condition for  the  $(2,7)$  case  using Proposition  \ref{prop:ver32}  and computer  algebra. Using  these results and ad-hoc methods in  Section \ref{sec:spin7},  we  prove  assertions (1)  and (3)  of  Theorem \ref{thm:main}. 
In Section \ref{sec:second} we  study   the  second     condition \eqref{CR2}  for the formal integrability  of the   CR-twistor space over $(M,g, \chi)$   using the  formalism  of the  Fr\"ohlicher-Nijenhuis  bracket  (Proposition \ref{prop:CR2}).  Then we give the proof of  Theorem  \ref{thm:main}  (4, 7).  Finally we  discuss   our results  and some open  questions. We  include an  Appendix containing {\tt sagemath} codes for solving   the first and  the second  CR integrability condition  in the (2, 7)  case.

\subsection{Notation and conventions}
\

$\bullet$ We  keep  notation in the introduction.   

$\bullet$ For a vector  bundle  $E$ over  a  manifold  $M$ and a smooth section  $\alpha \in \Gamma (E)$,    we  also write  $\alpha _x$ for the value  $ \alpha(x)$    to avoid  possibly  ugly notation  like $\alpha(x) (v)$ occurring, e.g, when $E$ is the endomorphism bundle of $TM$ and $v$ is a tangent vector at $x$.

$\bullet$  If $\xi$ is an   element in   a  vector  space $V$   with   inner product $\la, \ra$,  we denote by $\xi^\sharp$ the element  on $V^*$ defined  by $\xi^\sharp (v) = \la  \xi, v\ra$ for  all $v \in V$.

$\bullet$  Given a  $G$-action  on   a  space   $X$,  for  $x \in X$, we denote by $\mathrm{Stab}_G (x)$ the stabilizer of $x$ in $G$. 

$\bullet$    We   consider   in this paper   the Killing metric  on Lie algebra $\so (\R^n)$ and any its  Lie subalgebra  defined as follows   $\la X, Y\ra =  - \frac{1}{2} Tr  (XY)$.

$\bullet$
Let $(V,\langle,\rangle)$ be an Euclidean vector space. We denote by $\mathcal{AC}(V)$ the vector subspace of $\bigwedge^2V^\ast \otimes \mathfrak{so}(V)$ consisting of elements $R\in \bigwedge^2V^\ast\otimes \mathfrak{so}(V)$ 
that  satisfy the algebraic Bianchi identity, i.e., 
$$ R(w_1, w_2) w_3 +R(w_2, w_3) w_1+R(w_3, w_1) w_2=0,$$
The elements of $\mathcal{AC}(V)$   are called \emph{algebraic curvature (operators)} on $V$.  It is known that   $\dim \mathcal {AC} (V) =  \frac{1}{12}(\dim V)^2 ((\dim V) ^2 -1)$ \cite[Corollary 1.8.4, p. 45]{Gilkey2001}. 

$\bullet$  It is known  that  the  image  $R^{\mathrm{Id}}$ 
of the operator $\mathrm{Id}: \bigwedge ^2   V^*  \to \bigwedge ^ 2 V^*$  in   $\bigwedge ^2 V^* \otimes   \so (V)$ via  the identification $\bigwedge ^2 V^* $ with $\so (V)$     is  an algebraic  curvature  of constant  sectional  curvature, see. e.g.
\cite[Lemma 1.6.4, p. 31]{Gilkey2001}.  It is immediate to  see  that
\begin{equation}\label{eq:rid1}
R^{\rm Id}(w_1, w_2)w_3 := \langle w_2, w_3\rangle w_1 - \langle w_1, w_3\rangle w_2, 
\end{equation}
for any $w_1, w_2, w_3$, see \cite[p. 31]{Gilkey2001}. By the Schur  lemma if $(M,g)$ is a connected Riemannian manifold of dimension at least 3, then the Riemannian curvature tensor of $M$ is of the form $R=\lambda(x) R^{\mathrm{Id}}$ at any point $x\in M$  if and only if $(M,g)$ has constant curvature \cite[Theorem 2.2, p. 202]{KN1963}.

$\bullet$ Let  $Der(\Om^*(M))$  be the   graded Lie algebra of graded derivations of $\Om^* (M)$. 
For $K\in \Om^*(M, TM)$ we denote by $\iota_K$ and by $\mathcal{L}_K=[d,\iota_K]$ the contraction with $K$ and corresponding the Lie derivative, respectively. It is  known that  $\Ll\colon \Om^*(M, TM) \to Der(\Om^*(M))$ is injective,  and moreover \cite{FN1956a, FN1956b}
$$\Ll(\Om^* (M, TM)) = \{  D \in Der (\Om^*(M))|\:  [D, d] = 0\}.$$
Hence  $\Ll(\Om^* (M, TM))$ is  closed  under  the graded  Lie bracket $[, ]$  on $Der(\Om^*(M))$ and one then defines the \emph{Fr\"olicher-Nijenhuis bracket}  $[, ] ^{FN}$ on $\Om^*(M, TM)$   as  the pull-back  of the   graded Lie bracket  on $Der(\Om^*(M))$ via  the  linear embedding $\Ll$, i.e., 
$$\Ll_{[K, L]^{FN}}: = [\Ll_K, \Ll_L].$$

\section[The first integrability condition]{A reformulation of  the first condition  for  the formal integrability  of  CR-twistor   spaces  $({\mathbb{G}}, B. J_{ g, \chi})$}\label{sec:reformulation}
In this section we  reformulate the first condition  for  the  integrability  of  CR-twistor   spaces  $({\mathbb{G}}, B, J_{ g, \chi})$  in terms of a system  of linear  equations for the curvature tensor of $(M,g)$. This will in particular imply that  the  first integrability condition is automatically satisfied in the  case  $ (r,m)=(1,2n)$  or  $ (r, m)= (m-1,m)$. The proof goes in two steps. First we  express the first integrability condition as the condition $[J_{g,\chi}X,J_{g,\chi}Y]^{\mathrm{vert}}=[X,Y]^{\mathrm{vert}}$ for any  $X,Y\in \Gamma(B)$ (Corollary \ref{cor:cond2}). Then we translate this in a system of conditions on the curvature tensor of $(M,g)$  (Proposition \ref{prop:ver32}).
\subsection{The equation $[J_{g,\chi}X,J_{g,\chi}Y]^{\mathrm{vert}}=[X,Y]^{\mathrm{vert}}$}
Let $(M, g, \chi)$  be  a Riemannian manifold    endowed  with  a  $r$-fold VCP structure  and $(B, J_{g, \chi})$ the
almost CR-structure  on  ${\mathbb{G}}$.
 Let $E^*$ be the dual bundle of the tautological bundle $E$  over ${\mathbb{G}}$. At every point $v$ in ${\mathbb{G}}$, the Riemannian metric $g$ induces a natural isomorphism
 $E_v=T_{\pi(v)}M/E_v^\perp$, where $\pi\colon {\mathbb{G}} \to M$ is the projection to the base. This gives a natural identification
$E^* = \mathrm{Ann} (E^\perp)$, where  $\mathrm{Ann} (E^\perp)$ is the vector bundle over ${\mathbb{G}}$  whose
	fiber over $v$ consists of all elements of $T_{\pi (v)}^*M$ that annihilate $E_v ^\perp$.

\begin{remark}\label{rem:bmap}Since $E^*$ is a subbundle of $\pi^*T^*M$ via the identification $E^* = \mathrm{Ann} (E^\perp)$, 
any  $ \theta\in  \Gamma (E^*)$ defines  a    map  of fiber bundles over $M$,
\begin{equation}\label{eq:bmap}
\hat \theta:  {\mathbb{G}} \to T^*M,  
\end{equation}
mapping a point $v\in {\mathbb{G}}$ to the element $\theta_v$ seen as an element in $T_{\pi (v)}^*M$. 
\end{remark}
The  Riemannian metric $g$   induces a natural  Riemannian metric on   the vector bundle $T^*M\to  M$      and so endows  $T^*M$  with   the associated Levi-Civita  connection $\nabla ^{LC}$ and the corresponding splitting of the tangent bundle of the total space of $T^*M$ into a vertical and a horizontal subbundle. The same applies to the bundle $\pi^*T^*M$ and to its subbundle $E^*$. 
\begin{notation}
 For   $ v\in {\mathbb{G}}$  we let
 \begin{equation}\label{eq:gamma-hor}
 \Gamma _{\hor(v)}  (E^*) : = \{  \theta  \in  \Gamma  (E^*): \;    d  \hat  \theta ( T _v ^{\hor} {\mathbb{G}} ) \subset    T^{\hor}_{ \hat \theta (v)}  T^*M   \}.
 \end{equation}
 \end{notation}
 In other words,  $\Gamma _{\hor(v)}(E^*)$   consists of  elements in    $\Gamma (E^*)$           that   are   ``horizontal" at  $v$.
 Using parallel transport in the Grassmann bundle ${\mathbb{G}}\to M$ and in the total space of the vector bundle $E^*$ on ${\mathbb{G}}$ seen as a fiber bundle over $M$ one easily shows that every vector $\xi\in E^*_v$ can be extended to a section of $E^*$ that is horizontal at $v$. For later reference, we state this fact as the following Lemma.
 \begin{lemma}\label{lem:exist}  For any  $\xi \in  E^*_v$ there exists     an  element    $\theta \in \Gamma_{\hor(v)} (E^*)$ such that  $\theta_v = \xi$.
\end{lemma}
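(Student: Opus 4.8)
The plan is to produce $\theta$ by projecting onto $E^*$ a $1$-form on the base $M$ that is chosen to be covariantly constant at $x_0:=\pi(v)$. Concretely, first I would pick a $1$-form $\alpha\in\Om^1(M)$ with $\alpha_{x_0}=\xi$ (viewing $\xi\in E^*_v=\mathrm{Ann}(E_v^\perp)\subset T^*_{x_0}M$) and $(\nabla^{LC}\alpha)_{x_0}=0$; such an $\alpha$ exists, e.g. by parallel-transporting $\xi$ radially along geodesics emanating from $x_0$ in a normal neighbourhood. Let $P\colon \pi^*T^*M\to E^*$ denote the fibrewise orthogonal projection onto $E^*=\mathrm{Ann}(E^\perp)$ determined by $g$, and set $\theta:=P(\pi^*\alpha)\in\Gamma(E^*)$, i.e. $\theta_w=P_w(\alpha_{\pi(w)})$. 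Since $\xi$ already annihilates $E_v^\perp$ we have $P_v(\alpha_{x_0})=P_v(\xi)=\xi$, so $\theta_v=\xi$ as required. It then remains to verify that $\theta\in\Gamma_{\hor(v)}(E^*)$ in the sense of \eqref{eq:gamma-hor}, i.e. that $d\hat\theta_v$ sends $T^{\hor}_v{\mathbb{G}}$ into the horizontal subbundle of $TT^*M$, with $\hat\theta$ the bundle map \eqref{eq:bmap}.

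For the horizontality I would argue along horizontal lifts. By \eqref{eq:hor3} a horizontal vector $w\in T^{\hor}_v{\mathbb{G}}$ is determined by $u:=d\pi_v(w)\in T_{x_0}M$; choosing a curve $\gamma$ in $M$ with $\dot\gamma(0)=u$ and letting $\tilde\gamma$ be its horizontal lift to ${\mathbb{G}}$ with $\tilde\gamma(0)=v$, one has $\dot{\tilde\gamma}(0)=w$. The image $d\hat\theta_v(w)$ is horizontal in $TT^*M$ precisely when the vertical part of $\tfrac{d}{dt}\theta_{\tilde\gamma(t)}$ at $t=0$ vanishes, i.e. when $\tfrac{D}{dt}\theta_{\tilde\gamma(t)}\big|_{t=0}=0$, the covariant derivative being taken along $\gamma$ with respect to $\nabla^{LC}$ on $T^*M$. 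So the whole lemma reduces to checking this one covariant derivative.

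The key observation that makes this work is that the horizontal lift $\tilde\gamma$ is exactly the parallel transport of the oriented plane $E_v$ along $\gamma$: the connection on ${\mathbb{G}}$ is induced from $\nabla^{LC}$ on $\bigwedge^{r-1}TM$, and $\bigwedge^{r-1}$ of the (orthogonal) parallel transport preserves unit decomposable vectors and their orientation. Hence both $E_{\tilde\gamma(t)}$ and $E^\perp_{\tilde\gamma(t)}$ are the parallel transports of $E_v$ and $E_v^\perp$, and since parallel transport is an isometry it intertwines the orthogonal projections: writing $\tau_t\colon T^*_{x_0}M\to T^*_{\gamma(t)}M$ for parallel transport, one gets $P_{\tilde\gamma(t)}=\tau_t\circ P_v\circ\tau_t^{-1}$. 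Therefore $\tau_t^{-1}\theta_{\tilde\gamma(t)}=P_v(\tau_t^{-1}\alpha_{\gamma(t)})$, and using that $\tfrac{D}{dt}s|_{0}=\tfrac{d}{dt}\big(\tau_t^{-1}s(t)\big)|_{0}$ for any section $s$ along $\gamma$, we obtain $\tfrac{D}{dt}\theta_{\tilde\gamma(t)}|_0=P_v\big(\tfrac{D}{dt}\alpha_{\gamma(t)}|_0\big)=P_v(\nabla^{LC}_u\alpha)$. As $(\nabla^{LC}\alpha)_{x_0}=0$, this vanishes for every $u$, hence for every $w\in T^{\hor}_v{\mathbb{G}}$, which is exactly $\theta\in\Gamma_{\hor(v)}(E^*)$.

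The only genuinely substantive point — the rest being routine — is the interchange of covariant differentiation with the projection $P$ along horizontal directions; this is legitimate precisely because along a horizontal lift the subspaces $E$ and $E^\perp$, and hence $P$, are parallel, so $P$ is covariantly constant in horizontal directions at $v$. The variation of $P$ in vertical directions is irrelevant here, since $\Gamma_{\hor(v)}(E^*)$ only constrains $d\hat\theta_v$ on $T^{\hor}_v{\mathbb{G}}$. I do not expect any serious obstacle: this is a standard ``project a covariantly-constant-at-a-point extension'' argument, and smoothness of $\theta$ is automatic from smoothness of $\alpha$ and of $P$.
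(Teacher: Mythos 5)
Your proposal is correct and takes essentially the same route as the paper, whose proof is only the sketch ``using parallel transport in the Grassmann bundle ${\mathbb{G}}\to M$ and in the total space of $E^*$ one easily shows\dots'': your radial parallel transport of $\xi$ (yielding $(\nabla^{LC}\alpha)_{\pi(v)}=0$) combined with the identity $P_{\tilde\gamma(t)}=\tau_t\circ P_v\circ\tau_t^{-1}$ along horizontal lifts, which holds because horizontal curves in ${\mathbb{G}}$ are exactly parallel transports of the plane $E_v$, is precisely the parallel-transport mechanism the authors invoke. The device $\theta=P(\pi^*\alpha)$ is merely a clean way of packaging the extension of $\xi$ across the fibres of ${\mathbb{G}}\to M$ (modulo the routine cutoff needed to make $\alpha$ global), so there is no substantive difference from the paper's argument.
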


\begin{notation}
We write
\begin{align*}
\eta\colon \Gamma(E^*) &\to \Omega^1(\mathbb{G}),\\
\theta&\mapsto \eta[\theta]
\end{align*}
for the map sending a smooth section $\theta$ of $E^*$ to the 1-form $\eta[\theta]$ given by
\[
 \eta[\theta]_v(w)= \theta_v  (d\pi_v(w)),
\]
 for any $w\in T_v{\mathbb{G}}$. In other words, $\eta[\theta]$ is the section of $T^*\mathbb G \to \mathbb G$ given by the composition
 \[
 \mathbb G \xrightarrow{\theta} E^*\hookrightarrow \pi^*T^*M\xrightarrow{(d\pi)^*} T^*{\mathbb{G}}.
 \]
 \end{notation}

\begin{lemma}\label{lemma:annihilator}
Let us consider the subbundle $B \oplus^\perp   T^{\mathrm{vert}} {\mathbb{G}}$ of $T {\mathbb{G}}$. For any $v\in 
{\mathbb{G}}$ we have
\[
B_v \oplus^\perp   T^{\mathrm{vert}}_v {\mathbb{G}}=\bigcap_{\theta\in \Gamma_{\hor(v)}(E ^*)} \ker(\eta[\theta]_v).
\]
\end{lemma}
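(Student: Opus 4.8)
The plan is to prove the two inclusions separately, after first recording a preliminary observation that trivializes half of the argument: the right-hand side depends on each section $\theta$ only through its value $\theta_v\in E^*_v$. Indeed, since $d\pi_v$ annihilates the vertical subspace $T_v^{\mathrm{vert}}\mathbb{G}$, the pairing $\eta[\theta]_v(w)=\theta_v(d\pi_v(w))$ is computed entirely from $\theta_v$ and from $d\pi_v(w)=d\pi_v(w^{\mathrm{hor}})$; in particular $\ker(\eta[\theta]_v)$ depends on $\theta$ only through $\theta_v$, not on the behaviour of $\theta$ near $v$. With this in hand, for the inclusion $\subseteq$ I would check that both summands on the left of \eqref{eq:hor2} are killed by every $\eta[\theta]_v$. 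Any vertical vector lies in $\ker(\eta[\theta]_v)$ because $d\pi_v$ kills it, and for $w\in B_v$ one has $d\pi_v(w)\in E_v^\perp$ by the definition \eqref{eq:B} of $B$, while $\theta_v\in E^*_v=\mathrm{Ann}(E_v^\perp)$ annihilates $E_v^\perp$; hence $\eta[\theta]_v(w)=0$. This already holds for every $\theta\in\Gamma(E^*)$, a fortiori for every $\theta\in\Gamma_{\hor(v)}(E^*)$.

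For the reverse inclusion $\supseteq$, I would take $w$ in the intersection and decompose $w=w^{\mathrm{vert}}+w^{\mathrm{hor}}$ according to \eqref{eq:hor2}. Since the vertical part automatically lies in the target, it suffices to show $w^{\mathrm{hor}}\in B_v$, that is, $d\pi_v(w)\in E_v^\perp$. This is exactly where the restriction to horizontal sections is invoked, via Lemma \ref{lem:exist}: by that lemma every $\xi\in E^*_v$ is the value at $v$ of some $\theta\in\Gamma_{\hor(v)}(E^*)$, so the hypothesis that $\eta[\theta]_v(w)=0$ for all such $\theta$ forces $\xi(d\pi_v(w))=0$ for every $\xi\in E^*_v$. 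Thus $d\pi_v(w)$ lies in the annihilator of $\mathrm{Ann}(E_v^\perp)$, which by the finite-dimensional double-annihilator identity is precisely $E_v^\perp$. Hence $d\pi_v(w)\in E_v^\perp$, so $w^{\mathrm{hor}}\in B_v$, and the inclusion follows.

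The argument is essentially formal, and I do not expect a genuine obstacle beyond invoking double annihilation correctly in finite dimensions. The only conceptual point requiring care is the reverse inclusion: its entire content is that the family $\Gamma_{\hor(v)}(E^*)$ is rich enough to realize \emph{every} covector in $E^*_v$ as a value at $v$ — which is exactly the assertion of Lemma \ref{lem:exist} — so that the locus cut out by these $1$-forms is controlled by all of $E^*_v=\mathrm{Ann}(E_v^\perp)$ rather than by a proper subfamily. Had one restricted to a strictly smaller set of values, the intersection on the right could a priori be larger than $B_v\oplus^\perp T_v^{\mathrm{vert}}\mathbb{G}$, so Lemma \ref{lem:exist} is precisely what guarantees equality rather than a mere inclusion.
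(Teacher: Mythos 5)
Your proof is correct and follows essentially the same route as the paper's: the easy inclusion via $d\pi_v(w_B)\in E_v^\perp$ and $\theta_v\in\mathrm{Ann}(E_v^\perp)$, and the reverse inclusion via Lemma \ref{lem:exist} to realize every $\xi\in E_v^*$ as $\theta_v$ for some $\theta\in\Gamma_{\hor(v)}(E^*)$, concluding with the double-annihilator identity $\bigcap_{\xi\in\mathrm{Ann}(E_v^\perp)}\ker(\xi)=E_v^\perp$. Your preliminary observation that $\eta[\theta]_v$ depends on $\theta$ only through $\theta_v$, and the remark that the forward inclusion needs no horizontality, are correct but minor streamlinings of the same argument.
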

\begin{proof}  
Let $w=w_{B}+w_{\mathrm{vert}} \in T_v{\mathbb{G}}$, where $w _B \in B_v$ and $w_{\mathrm{vert}} \in T_v^{\mathrm{vert}} {\mathbb{G}}$. Then for every $\theta\in \Gamma_{\hor(v)}(E^*)$  we have
\begin{equation}\label{eq:theta}
\eta[\theta]_v(w)=\theta_v(d\pi_v(w_{B}))=0.
\end{equation}
Namely, by definition of $B_v$, equation (\ref{eq:B}), the vector $d\pi_v(w_{B})$ is in $E_v^\perp$ and so it is annihilated by $\theta_v\in E^*_v=\mathrm{Ann}(E_v^\perp)$. 
Vice versa, let $w\in  T_v {\mathbb{G}}$ be such that $\eta[\theta]_v(w)=0$ for every $\theta\in \Gamma_{\hor(v)}(E^*)$.  Let us write $w=w_{\mathrm{hor}}+w_{\mathrm{vert}}$, with   $w_{\hor/\mathrm{vert}} \in T^{\hor/\mathrm{vert}} {\mathbb{G}}$. Let $\xi\in E_v^*$. By Lemma \ref{lem:exist}, there exists $\theta \in \Gamma_{\hor(v)} (E^*)$ such that  $\theta_v = \xi$, and so 
 \[
 \xi(d\pi_v(w_\hor))= \xi(d\pi_v(w))=\theta_v(d\pi_v(w))=\eta[\theta]_v(w)=0.
 \]
 Therefore,  
 \[
 d\pi_v(w_\hor)\in \bigcap_{\xi\in \mathrm{Ann}(E_v^\perp)} \ker(\xi)=E_v^\perp,
\]
and so $w_{\mathrm{hor}}\in B_v$. This completes the  proof of Lemma \ref{lemma:annihilator}.
\end{proof}

\begin{lemma}\label{lem:lem21ver} 
		Let $v\in {\mathbb{G}}$. For  any  $\theta \in \Gamma_{\hor(v)}(E^*)$
 one has   
\[		
		(d\eta[\theta])_v\bigr\vert_{\bigwedge^2 T^{\mathrm{hor}} _{v} {\mathbb{G}}  } =0
\]	

\end{lemma}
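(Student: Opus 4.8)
The plan is to recognize $\eta[\theta]$ as the pullback of the tautological (Liouville) $1$-form on $T^*M$ along the bundle map $\hat\theta$, and then to reduce the statement to the vanishing of the canonical symplectic form on horizontal planes, which in turn is precisely the torsion-freeness of $\nabla^{LC}$. This keeps all the geometric content in a single classical fact and makes the role of $\Gamma_{\hor(v)}(E^*)$ transparent.

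First I would make the identification $\eta[\theta]=\hat\theta^{\,*}\alpha$ explicit, where $\alpha$ is the tautological $1$-form on $T^*M$, characterized by $\alpha_\xi(W)=\xi\big(d\pi_{T^*M}(W)\big)$ for $\xi\in T^*M$ and $W\in T_\xi(T^*M)$, with $\pi_{T^*M}\colon T^*M\to M$ the projection. Since $\pi_{T^*M}\circ\hat\theta=\pi$, one computes for $w\in T_v\mathbb{G}$ that $(\hat\theta^{\,*}\alpha)_v(w)=\hat\theta(v)\big(d\pi_{T^*M}(d\hat\theta_v(w))\big)=\theta_v\big(d\pi_v(w)\big)=\eta[\theta]_v(w)$, which is exactly the defining formula for $\eta[\theta]$. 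Taking exterior derivatives and using that $d$ commutes with pullback gives $d\eta[\theta]=\hat\theta^{\,*}(d\alpha)=\hat\theta^{\,*}\omega$, where $\omega=d\alpha$ is the canonical symplectic form of $T^*M$. Next I would invoke horizontality of $\theta$ at $v$: by definition of $\Gamma_{\hor(v)}(E^*)$ one has $d\hat\theta_v(T_v^{\hor}\mathbb{G})\subseteq T^{\hor}_{\hat\theta(v)}T^*M$, the horizontal subbundle being the one determined by $\nabla^{LC}$. Hence for $X,Y\in T_v^{\hor}\mathbb{G}$ the images $d\hat\theta_v(X),d\hat\theta_v(Y)$ are horizontal vectors in $T(T^*M)$, so $(d\eta[\theta])_v(X,Y)=\omega_{\hat\theta(v)}(d\hat\theta_v(X),d\hat\theta_v(Y))$ is the value of $\omega$ on a pair of horizontal vectors. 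The whole statement thus follows once $\omega$ is shown to annihilate $\bigwedge^2 T^{\hor}(T^*M)$.

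The single nonformal step, and the one I expect to carry the actual content, is this last vanishing. I would prove it by Cartan's formula: extending vector fields $U,V$ on $M$ to their horizontal lifts $U^{\hor},V^{\hor}$ on $T^*M$, one has $\alpha(U^{\hor})|_\xi=\xi(U)$, so that $U^{\hor}(\alpha(V^{\hor}))|_\xi=\xi(\nabla_U V)$ (along a horizontal curve $\xi$ is parallel, so metric compatibility leaves only the $\nabla V$ term), and symmetrically for the other term; moreover $\alpha$ kills the vertical part of $[U^{\hor},V^{\hor}]$, so $\alpha([U^{\hor},V^{\hor}])|_\xi=\xi([U,V])$. Combining,
\[
 d\alpha(U^{\hor},V^{\hor})|_\xi=\xi\big(\nabla_UV-\nabla_VU-[U,V]\big)=\xi(\mathrm{Tor}_{\nabla^{LC}}(U,V))=0,
\]
the last equality being exactly the torsion-freeness of the Levi-Civita connection. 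Since horizontal lifts span the horizontal subbundle and the identity is pointwise, this yields $\omega\big\vert_{\bigwedge^2 T^{\hor}(T^*M)}=0$ and completes the argument.

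A purely self-contained alternative, avoiding $T^*M$ altogether, is to extend $X,Y$ directly to horizontal lifts of vector fields on $M$ and apply Cartan's formula to $\eta[\theta]$ itself: using that $\eta[\theta]$ annihilates $T^{\ver}\mathbb{G}$ and that $\theta$ is parallel along horizontal directions at $v$, the same three terms appear and recombine into $\theta_v(\mathrm{Tor}_{\nabla^{LC}})$. The only delicate bookkeeping there is to justify that horizontality of $\hat\theta$ in the sense of $\Gamma_{\hor(v)}(E^*)$ is equivalent to the vanishing at $v$ of the horizontal covariant derivative of $\theta$; this is standard, but it is the point where care is needed, so I would favor the tautological-form route, which packages this cleanly.
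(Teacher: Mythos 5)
Your proposal is correct and follows essentially the same route as the paper's proof: identify $\eta[\theta]$ as $\hat\theta^{\,*}$ of the Liouville $1$-form, pull back the canonical symplectic form, and use horizontality of $\hat\theta$ at $v$ to reduce everything to the vanishing of $\omega$ on $\bigwedge^2 T^{\hor}(T^*M)$. The only difference is that the paper cites this last vanishing as well known, whereas you supply the standard proof via Cartan's formula and torsion-freeness of $\nabla^{LC}$ --- a correct and welcome addition, since it makes explicit where the geometric content lies.
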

	\begin{proof}  
	The bundle $E^* =\mathrm  {Ann} (E ^\perp)$ over ${\mathbb{G}}$ is a subbundle of the   bundle $\pi ^* T^*M$  and therefore    a section  $\theta$ of $\Gamma (E^*)$ is a section  of $\pi^* T^*M$.  
 We have a  commutative diagram
    \[
 \xymatrix{
  T^*\mathbb{G}\\
  \pi^*T^*M\ar[u]^{(d\pi)^*}\ar[d]_{\mathrm{pr}}\ar[r]^{\hat{\pi}}& T^* M\ar[d]^{\mathrm{pr}}\\
\mathbb G \ar[r]_{\pi}\ar@/^1.5pc/[u]^{\theta}\ar@/^4.5pc/[uu]^{\eta[\theta]}\ar[ur]^{\hat{\theta}}& M,
 }
  \] 
 where $\hat{\theta}\colon \mathbb{G}\to T^*M$ is the map defined in Remark \ref{rem:bmap},
  and so
  \begin{equation}\label{eq:Liou}
   \eta[\theta]  = 
    \hat \theta^* (\Theta_{\mathrm{Lio};M}),
  \end{equation} 
 where  $\Theta_{\mathrm{Lio};M}$   is the   Liouville 1-form on $T^* M$.
 Let $\om $ be the canonical symplectic  form on $T^*M$.   From \eqref{eq:Liou} we obtain
 
  \begin{equation}\label{eq:omega}
   d\eta[\theta] 
   =  \hat \theta^* (\om)=\om\circ (d\hat{\theta}\wedge d\hat{\theta}).
   \end{equation}
 Since $\theta \in \Gamma_{\hor(v)}(E^*)$, the differential $d\hat \theta$ maps  the horizontal
  space $T_v^\hor \mathbb G$  to  $T^\hor_{\hat{\theta}(v)} T^*M$. It   is well-known that the restriction of the canonical 2-form $\omega$ to $\bigwedge^2 T^\hor T^*M$ identically vanishes. This concludes the proof of Lemma \ref{lem:lem21ver}.
\end{proof}

\begin{lemma}\label{lem:bracket-in-B-plus-Tvert}
For  any $X, Y \in \Gamma (B)$ we have
$$[X,Y]\in  \Gamma(B\oplus^\perp T^{\mathrm{vert}}{\mathbb{G}}).$$
\end{lemma}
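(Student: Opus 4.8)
The plan is to test membership in $B \oplus^\perp T^{\mathrm{vert}}\mathbb{G}$ pointwise, using the characterization of Lemma \ref{lemma:annihilator}. Fix $v \in \mathbb{G}$; by that lemma it suffices to show that $\eta[\theta]_v([X,Y]) = 0$ for every $\theta \in \Gamma_{\hor(v)}(E^*)$. I would compute $\eta[\theta]([X,Y])$ via the Cartan formula for the exterior derivative of a $1$-form,
\[
d\eta[\theta](X,Y) = X\bigl(\eta[\theta](Y)\bigr) - Y\bigl(\eta[\theta](X)\bigr) - \eta[\theta]([X,Y]),
\]
so that $\eta[\theta]([X,Y]) = X\bigl(\eta[\theta](Y)\bigr) - Y\bigl(\eta[\theta](X)\bigr) - d\eta[\theta](X,Y)$.

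The key observation is that the first two terms vanish \emph{identically} as functions on $\mathbb{G}$, not merely at $v$. Indeed, for any $\theta \in \Gamma(E^*)$ and any $X \in \Gamma(B)$ one has, at every point $w$,
\[
\eta[\theta]_w(X_w) = \theta_w\bigl(d\pi_w(X_w)\bigr) = 0,
\]
because $X_w \in B_w$ forces $d\pi_w(X_w) \in E_w^\perp$ by \eqref{eq:B}, while $\theta_w \in E^*_w = \mathrm{Ann}(E_w^\perp)$. This uses no horizontality of $\theta$: it holds for arbitrary sections of $E^*$. Consequently $\eta[\theta](X)$ and $\eta[\theta](Y)$ are the zero function, hence $X\bigl(\eta[\theta](Y)\bigr) = Y\bigl(\eta[\theta](X)\bigr) = 0$, and therefore $\eta[\theta]([X,Y]) = -d\eta[\theta](X,Y)$.

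It remains to evaluate $-d\eta[\theta]_v(X_v, Y_v)$. Since $X, Y \in \Gamma(B)$ and $B_v \subseteq T^{\mathrm{hor}}_v\mathbb{G}$ by \eqref{eq:B}, the vectors $X_v$ and $Y_v$ lie in $T^{\mathrm{hor}}_v\mathbb{G}$. As $\theta \in \Gamma_{\hor(v)}(E^*)$, Lemma \ref{lem:lem21ver} gives $(d\eta[\theta])_v\bigl\vert_{\bigwedge^2 T^{\mathrm{hor}}_v\mathbb{G}} = 0$, whence $d\eta[\theta]_v(X_v,Y_v) = 0$. This yields $\eta[\theta]_v([X,Y]) = 0$ for all $\theta \in \Gamma_{\hor(v)}(E^*)$; applying Lemma \ref{lemma:annihilator} at the arbitrary point $v$ then gives $[X,Y] \in \Gamma(B \oplus^\perp T^{\mathrm{vert}}\mathbb{G})$. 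I expect the only delicate point to be the clean separation in the previous paragraph: one must invoke the pointwise annihilation property, valid for \emph{all} sections of $E^*$, to kill the two derivative terms globally, and reserve the horizontality hypothesis on $\theta$ for the single application of Lemma \ref{lem:lem21ver} at $v$.
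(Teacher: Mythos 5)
Your proof is correct and is essentially identical to the paper's own argument: both fix $v$, reduce via Lemma \ref{lemma:annihilator} to showing $\eta[\theta]_v([X,Y]_v)=0$, apply the Cartan formula, kill the derivative terms by observing that $\eta[\theta](X)$ and $\eta[\theta](Y)$ vanish identically (using only that $\theta$ is a section of $\mathrm{Ann}(E^\perp)$), and kill the $d\eta[\theta]$ term at $v$ via Lemma \ref{lem:lem21ver}. Your closing remark correctly isolates the one subtlety — horizontality of $\theta$ is used only in the single pointwise application of Lemma \ref{lem:lem21ver} — exactly as in the paper.
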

\begin{proof}   Let $v$ be a point in ${\mathbb{G}}$. We have to show that $[X,Y]_v\in  B_v\oplus^\perp T_v^{\mathrm{vert}}{\mathbb{G}}$.
By Lemma \ref{lemma:annihilator} this is equivalent to showing that  for every $\theta \in \Gamma _{\hor(v)}(E^*)$
		we have
		$\eta[\theta]_v([X,Y]_v)=0$. 
By the  Cartan formula,
\[
\eta[\theta]_v ([X, Y]_v)=- (d\eta[\theta])_v(X_v,Y_v) + X_v(\eta[\theta](Y))  - Y_v(\eta[\theta](X)).
\]
By definition of $B$, we have $B_v\subset T_v ^{\mathrm{hor}}{\mathbb{G}}$, and so $(d\eta[\theta])_v(X_v,Y_v)=0$, by Lemma \ref{lem:lem21ver}.
By definition of $\Gamma _{\hor(v)}(E^*)$, $\theta$ is in particular an element of $\Gamma(E^*)=\Gamma(\mathrm{Ann}(E^\perp))$. Therefore, for any point $v'$ in ${\mathbb{G}}$ we have
\[
\eta[\theta]_{v'}(X_{v'})=\theta_{v'}  (d\pi_{v'}(X_{v'}))=0,
\]
since $X\in \Gamma(B)$ and so $d\pi_{v'}(X_{v'})\in E_{v'}^\perp$, by the defining equation \eqref{eq:B}. This means that $\eta[\theta](X)$ identically vanish on ${\mathbb{G}}$. By the same argument, also $\eta[\theta](Y)\equiv 0$, and so we have $\eta[\theta]_v ([X, Y]_v)=0$.
\end{proof}

\begin{corollary}\label{cor:cond2}  Let $J_{g,\chi}$ the complex structure on $B$ induced by the VCP $\chi$ (equation \eqref{eq:BJv}). For  any $X, Y \in \Gamma (B)$ we have
\[
[J_{g,\chi}X,J_{g,\chi}Y]-[X,Y]\in  \Gamma(B) \text{ if and only if }[J_{g,\chi}X,J_{g,\chi}Y]^{\mathrm{vert}}=[X,Y]^{\mathrm{vert}}.
\]
\end{corollary}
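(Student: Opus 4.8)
The plan is to reduce the statement to the orthogonal direct-sum decomposition $B\oplus^\perp T^{\mathrm{vert}}{\mathbb{G}}$ already analysed in Lemma \ref{lem:bracket-in-B-plus-Tvert}, after first checking that all four Lie brackets appearing in the claim are controlled by that lemma. The key conceptual point is that $B$ sits inside the \emph{horizontal} part of $T{\mathbb{G}}$, so that membership in $B$ is detected purely by the vanishing of the vertical component.

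First I would observe that $J_{g,\chi}$ is an almost complex structure on the distribution $B$, hence an endomorphism of $B$; in particular it maps $\Gamma(B)$ to itself, so for $X,Y\in\Gamma(B)$ the vector fields $J_{g,\chi}X$ and $J_{g,\chi}Y$ are again sections of $B$. Applying Lemma \ref{lem:bracket-in-B-plus-Tvert} once to the pair $(X,Y)$ and once to the pair $(J_{g,\chi}X,J_{g,\chi}Y)$, I conclude that both $[X,Y]$ and $[J_{g,\chi}X,J_{g,\chi}Y]$ are sections of $B\oplus^\perp T^{\mathrm{vert}}{\mathbb{G}}$, and therefore so is their difference $Z:=[J_{g,\chi}X,J_{g,\chi}Y]-[X,Y]$.

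Next I would exploit the compatibility of the two orthogonal splittings. Since $B_v\subseteq T_v^{\mathrm{hor}}{\mathbb{G}}$ by the defining equation \eqref{eq:B}, any element $w\in B_v\oplus^\perp T_v^{\mathrm{vert}}{\mathbb{G}}$, written uniquely as $w=w_B+w_{\mathrm{vert}}$ with $w_B\in B_v$ and $w_{\mathrm{vert}}\in T_v^{\mathrm{vert}}{\mathbb{G}}$, has horizontal component $w_B$ and vertical component $w_{\mathrm{vert}}$ relative to the decomposition \eqref{eq:hor2}. Consequently, for such $w$ one has $w\in B_v$ if and only if $w^{\mathrm{vert}}=0$, because $w\in B_v$ forces the vertical summand to vanish, and conversely $w^{\mathrm{vert}}=0$ leaves $w=w_B\in B_v$.

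Applying this criterion fibrewise to $Z$, which lies in $B_v\oplus^\perp T_v^{\mathrm{vert}}{\mathbb{G}}$ at every $v$, yields $Z\in\Gamma(B)$ if and only if $Z^{\mathrm{vert}}=0$. By linearity of the vertical projection, $Z^{\mathrm{vert}}=[J_{g,\chi}X,J_{g,\chi}Y]^{\mathrm{vert}}-[X,Y]^{\mathrm{vert}}$, so the condition $Z^{\mathrm{vert}}=0$ is precisely $[J_{g,\chi}X,J_{g,\chi}Y]^{\mathrm{vert}}=[X,Y]^{\mathrm{vert}}$, which is the assertion. I do not expect any genuine obstacle: the substantive analytic content has been absorbed into Lemma \ref{lem:bracket-in-B-plus-Tvert}, and the only step demanding a moment of care is verifying that the $B$-summand of an element of $B\oplus^\perp T^{\mathrm{vert}}{\mathbb{G}}$ coincides with its horizontal part, which is immediate from $B\subseteq T^{\mathrm{hor}}{\mathbb{G}}$.
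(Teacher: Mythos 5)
Your proof is correct and follows essentially the same route as the paper's: apply Lemma \ref{lem:bracket-in-B-plus-Tvert} to both pairs $(X,Y)$ and $(J_{g,\chi}X,J_{g,\chi}Y)$ (using that $J_{g,\chi}$ is a bundle endomorphism of $B$), then observe that an element of $\Gamma(B\oplus^\perp T^{\mathrm{vert}}{\mathbb{G}})$ lies in $\Gamma(B)$ precisely when its vertical component vanishes. The only difference is that you make explicit the compatibility of the splitting $B\oplus^\perp T^{\mathrm{vert}}{\mathbb{G}}$ with the horizontal--vertical decomposition \eqref{eq:hor2}, a point the paper leaves implicit.
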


\begin{proof} By Lemma \ref{lem:bracket-in-B-plus-Tvert}, we have $[X,Y]\in \Gamma(B\oplus T^{\mathrm{vert}}{\mathbb{G}})$.  Since $J_{g,\chi}$ is an vector bundle endomorphism of $B$, we also have $J_{g,\chi}X,J_{g,\chi}Y\in \Gamma(B)$ and so by Lemma \ref{lem:bracket-in-B-plus-Tvert} again, $[J_{g,\chi}X,J_{g,\chi}Y]\in  \Gamma(B\oplus T^{\mathrm{vert}}{\mathbb{G}})$. This gives $[J_{g,\chi}X,J_{g,\chi}Y]-[X,Y]\in  \Gamma(B\oplus T^{\mathrm{vert}}{\mathbb{G}})$ and so $[J_{g,\chi}X,J_{g,\chi}Y]-[X,Y]\in  \Gamma(B)$ if and only if $([J_{g,\chi}X,J_{g,\chi}Y]-[X,Y])^{\mathrm{vert}}=0$.
\end{proof}

\subsection{A curvature  reformulation of the  first CR-integrability condition}\label{sec:curv}

 By Corollary \ref{cor:cond2}, the   first  CR-integrability  condition \eqref{CR1}   is equivalent to
 $$[J_{g,\chi}X,J_{g,\chi}Y]^{\mathrm{vert}}=[X,Y]^{\mathrm{vert}}
 $$
 for any $X,Y\in \Gamma(B)$.
 This latter condition can be conveniently expressed in terms  of the curvature operator  $R$  of the  Levi-Civita  connection on $(M,g)$. %
If $X, Y $ are  horizontal  vector fields  on  ${\mathbb{G}}$ and $v\in \mathbb G$, we have $R_{\pi(v)}(d\pi_v (X_v), d\pi_v (Y_v))   \in  \so ({T_{\pi (v)}M})$ and so a corresponding $\SO(\dim M)$-invariant vertical vector field $\mathfrak{r}_{\pi(v)}(d\pi_v (X_v), d\pi_v (Y_v))$ on the fiber of $\mathbb G\to M$ through $v$. Evaluating this vector field at the point $v$ we obtain a vertical tangent vector $\mathfrak{r}_{\pi(v)}(d\pi_v (X_v), d\pi_v (Y_v))\bigr\vert_v\in T_v^{\mathrm{vert}}{\mathbb{G}}$. It is a standard fact, that can be easily derived from see e.g. \cite[p. 290]{Besse1986} or \cite[p. 89]{KN1963} by noticing that
$
[-,-]^\mathrm{vert}\colon T^\hor \mathbb{G}\otimes T^\hor \mathbb{G} \to T^\mathrm{vert}\mathbb{G}
$
is a tensor,  that
 \begin{equation}
 \label{eq:vertical-to-curvature}
 [X,Y]^{\mathrm{vert}}_v=-\mathfrak{r}_{\pi(v)}(d\pi_v (X_v), d\pi_v (Y_v))\bigr\vert_v.
 \end{equation}

 We identify   $T_v ^\ver \mathbb{G}$  with   $\Hom (E_v, E_v ^\perp)$ and define a  linear embedding  $\epsilon: \Hom(E_v, E_v ^\perp) \to   \so (E_v\oplus E_v^\perp)$
by  extending the following   relations linearly   for  $\xi \in E_v$, $w\in E_v^\perp$, 
 and $X \in E_v \oplus E_v ^\perp$:
\begin{equation}\label{eq:i-ast}\epsilon (\xi ^\sharp \otimes  w )  (X): =  \xi(X) \cdot w -\la w,X\ra \cdot  \xi,
\end{equation}
 where $\xi^\sharp \in E_v^*$   is dual  to $\xi$  w.r.t.  $g\bigr\vert_{E_v}$. We shall  use the shorthand notation $\xi^\sharp \hat \otimes w$   for $\epsilon ( \xi^\sharp \otimes w  )$.
 The decomposition
\begin{equation}\label{eq:ortho}
\so (E_v \oplus E_v^\perp) = \so (E_v) \oplus \so (E_v ^\perp) \oplus \epsilon (\Hom (E_v, E_v ^\perp)
\end{equation}
is    an orthogonal decomposition w.r.t.  the  Killing metric,\footnote{In what follows  we shall often  omit  ``Killing" when  we  talk about a metric on a compact Lie algebra.} see e.g. \cite[Theorem  1.1, p. 231]{Helgason1978}.
 Let 
$
\Pi_{E_v^\ast \hat  \otimes E_v ^\perp} \colon \so (E_v \oplus E_v^\perp)\to \epsilon (\Hom (E_v, E_v ^\perp))
$
be the orthogonal projection. Then, under the identification $T_v ^\ver \mathbb{G}=  \Hom (E_v, E_v ^\perp)$, we have
$
\mathfrak{r}_{\pi(v)}(w_1, w_2)\bigr\vert_v=\Pi_{E_v^\ast \hat \otimes E_v ^\perp} R_{\pi(v)}(w_1,w_2)
$
for any $w_1,w_2$ in $T_{\pi(v)}M$. Therefore, equation \eqref{eq:vertical-to-curvature} can be rewritten as
\begin{equation}\label{eq:vertical-to-curvature2} 
 [X,Y]^{\mathrm{vert}}_v=-\Pi_{E_v^\ast \hat \otimes E_v ^\perp} R_{\pi(v)}(d\pi_v (X_v), d\pi_v (Y_v)).
\end{equation}

\begin{lemma}\label{lemma:jinv}
The following are equivalent
\begin{enumerate}
\item For any $v\in \mathbb{G}$ and two vectors $w_1,w_2\in E_v^\perp$  one has	
\begin{equation}\label{eq:jinv}
 R_{\pi(v)} (w_1, w_2)- R_{\pi(v)} (J_{E_v^\perp}w_1, J_{E_v^\perp}w_2)\in \so(E_v)\oplus \so(E_v^\perp) \subset \so (E_v \oplus E_v ^\perp),
\end{equation}
where $J_{E_v^\perp}$ is the complex structure on $E_v^\perp$ defined by \eqref{eq:Jev}.
\item For any $v \in  \mathbb{G}$, any $w_3 \in E_v ^\perp$ and $w_4 \in E_v$ one has
\begin{equation}\label{eq:jinv3a}
[\Pi_{\so(E_v^\perp)}R_{\pi(v)} (w_3, w_4),J_{E_v^\perp}]=0\in \so (E_v ^{\perp}).
\end{equation}

\end{enumerate}
\end{lemma}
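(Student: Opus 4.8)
The plan is to translate both conditions into statements about the scalar curvature components and then observe that, modulo the symmetries of $R$, they impose literally the same constraint. Throughout I abbreviate $E = E_v$, $F = E_v^\perp$, $J = J_{E_v^\perp}$ (a skew-symmetric complex structure on $F$, since it is orthogonal and squares to $-\mathrm{Id}$), and I set $R(a,b,c,d) := \langle R_{\pi(v)}(a,b)c, d\rangle$, which is antisymmetric in $a,b$ and in $c,d$ and enjoys the pair symmetry $R(a,b,c,d) = R(c,d,a,b)$.

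First I would unwind condition (1) by means of the orthogonal decomposition \eqref{eq:ortho}. The key observation is that the $\epsilon(\Hom(E,F))$-component of an element $A \in \so(E \oplus F)$ is faithfully detected by the single off-diagonal block $\langle A\xi, w\rangle$, $\xi \in E$, $w \in F$ (the complementary block $F \to E$ being minus its transpose, by skew-symmetry of $A$); this is immediate from the defining formula \eqref{eq:i-ast} for $\epsilon$. Hence the containment $R(w_1,w_2) - R(Jw_1,Jw_2) \in \so(E) \oplus \so(F)$ for $w_1, w_2 \in F$ is equivalent to $\langle R(w_1,w_2)\xi, w\rangle = \langle R(Jw_1,Jw_2)\xi, w\rangle$ for all $\xi \in E$ and $w, w_1, w_2 \in F$; by pair symmetry this reads $R(\xi,w,w_1,w_2) = R(\xi,w,Jw_1,Jw_2)$. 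Writing $T_{\xi,w}(b,c) := R(\xi,w,b,c)$, a $2$-form on $F$, condition (1) thus says exactly that each $T_{\xi,w}$ is $J$-invariant, i.e. $T_{\xi,w}(b,c) = T_{\xi,w}(Jb,Jc)$.

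Then I would unwind condition (2). For $w_3 \in F$, $w_4 \in E$ the operator $\Pi_{\so(F)} R(w_3,w_4)$ is simply the $F \to F$ block of $R(w_3,w_4)$, whose matrix coefficients are $\langle R(w_3,w_4)u, w\rangle = R(w_3,w_4,u,w)$ for $u,w \in F$. Since $J \in \so(F)$ is skew, the bracket $[\Pi_{\so(F)} R(w_3,w_4), J]$ vanishes if and only if $\langle R(w_3,w_4)Ju, w\rangle = -\langle R(w_3,w_4)u, Jw\rangle$ for all $u,w$, i.e. $R(w_3,w_4,Ju,w) + R(w_3,w_4,u,Jw) = 0$. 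Moving the $E$-argument to the front via the antisymmetry $R(w_3,w_4,\,\cdot\,,\,\cdot\,) = -R(w_4,w_3,\,\cdot\,,\,\cdot\,)$ turns this into $T_{w_4,w_3}(Ju,w) + T_{w_4,w_3}(u,Jw) = 0$, which is the \emph{infinitesimal} $J$-invariance of the very same family of $2$-forms $\{T_{\xi,a}\}_{\xi \in E,\, a \in F}$.

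The argument then closes with the elementary fact that a $2$-form $T$ on a space with complex structure $J$ satisfies $T(Jb,Jc) = T(b,c)$ for all $b,c$ if and only if $T(Jb,c) + T(b,Jc) = 0$ for all $b,c$ (substitute $c \mapsto Jc$ and use $J^2 = -\mathrm{Id}$). Applying this to every $T_{\xi,w}$ shows that conditions (1) and (2) are both equivalent to the $J$-invariance of all the $2$-forms $T_{\xi,w}$, hence to one another. I do not expect any genuine obstacle here: the lemma is entirely bookkeeping with the curvature symmetries and the block decomposition, with no analytic content. The one point requiring care is the very first step, namely that the mixed $\epsilon(\Hom(E,F))$-component is captured by the single block $\langle A\xi, w\rangle$ and receives no independent contribution from the transposed block; this is exactly where skew-symmetry and the explicit form \eqref{eq:i-ast} of $\epsilon$ enter. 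The only symmetry of $R$ used beyond the two antisymmetries is the pair symmetry, which is itself a consequence of the algebraic Bianchi identity.
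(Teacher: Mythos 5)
Your proof is correct: the block-decomposition reading of \eqref{eq:ortho}, the use of the pair symmetry $R(a,b,c,d)=R(c,d,a,b)$ to move the $E$-argument to the front, and the elementary equivalence between $J$-invariance and infinitesimal $J$-invariance of the bilinear forms $T_{\xi,w}$ together give exactly the ``straightforward'' verification that the paper alludes to (the paper omits the proof, deferring to arXiv:2203.04233v2). Nothing further is needed; in particular you correctly identified the one delicate point, namely that the $\epsilon(\Hom(E_v,E_v^\perp))$-component of a skew operator is detected by the single off-diagonal block $\langle A\xi,w\rangle$, and that $J_{E_v^\perp}\in\so(E_v^\perp)$ is skew because $\langle J_{E_v^\perp}z,z'\rangle=\varphi_\chi(v\wedge z\wedge z')$ is alternating.
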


\begin{proposition}\label{prop:ver32}   The first  condition  \eqref{CR1}  for the  CR-integrability of $(B,  J_{g, \chi})$ is equivalent to \eqref{eq:jinv} (and so to any of the conditions in Lemma \ref{lemma:jinv}).
\end{proposition}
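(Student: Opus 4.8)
The plan is to chain together three ingredients that are already in place: the reduction of the first CR-integrability condition to an equality of vertical brackets (Corollary \ref{cor:cond2}), the identification of the vertical bracket with a curvature projection (equation \eqref{eq:vertical-to-curvature2}), and the defining relation \eqref{eq:BJv} for the complex structure $J_{g,\chi}$ on $B$. The whole argument is then a matter of transporting the condition of Corollary \ref{cor:cond2} through these identifications and reading off the pointwise algebraic statement \eqref{eq:jinv}.

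First I would invoke Corollary \ref{cor:cond2}: condition \eqref{CR1} holds if and only if $[J_{g,\chi}X,J_{g,\chi}Y]^{\mathrm{vert}}=[X,Y]^{\mathrm{vert}}$ for all $X,Y\in\Gamma(B)$. Since $[-,-]^{\mathrm{vert}}\colon T^{\mathrm{hor}}\mathbb{G}\otimes T^{\mathrm{hor}}\mathbb{G}\to T^{\mathrm{vert}}\mathbb{G}$ is a tensor, this equality may be tested pointwise: it holds if and only if for every $v\in\mathbb{G}$ and every pair $X_v,Y_v\in B_v$ one has $[J_{g,\chi}X,J_{g,\chi}Y]^{\mathrm{vert}}_v=[X,Y]^{\mathrm{vert}}_v$. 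Because $d\pi_v\colon B_v\to E_v^\perp$ is an isometry and every vector of $B_v$ extends to a section of $B$, writing $w_1=d\pi_v(X_v)$ and $w_2=d\pi_v(Y_v)$ produces an arbitrary pair of vectors in $E_v^\perp$, and conversely every such pair arises this way.

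Next I would expand both sides using \eqref{eq:vertical-to-curvature2}. Directly, $[X,Y]^{\mathrm{vert}}_v=-\Pi_{E_v^\ast\hat\otimes E_v^\perp}R_{\pi(v)}(w_1,w_2)$. For the other bracket, \eqref{eq:BJv} gives $d\pi_v((J_{g,\chi}X)_v)=J_{E_v^\perp}(w_1)$ and $d\pi_v((J_{g,\chi}Y)_v)=J_{E_v^\perp}(w_2)$, whence $[J_{g,\chi}X,J_{g,\chi}Y]^{\mathrm{vert}}_v=-\Pi_{E_v^\ast\hat\otimes E_v^\perp}R_{\pi(v)}(J_{E_v^\perp}w_1,J_{E_v^\perp}w_2)$. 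Thus the equality of vertical brackets becomes the vanishing of $\Pi_{E_v^\ast\hat\otimes E_v^\perp}\bigl(R_{\pi(v)}(w_1,w_2)-R_{\pi(v)}(J_{E_v^\perp}w_1,J_{E_v^\perp}w_2)\bigr)$ for all $v$ and all $w_1,w_2\in E_v^\perp$. By the orthogonal decomposition \eqref{eq:ortho} the kernel of the projection $\Pi_{E_v^\ast\hat\otimes E_v^\perp}$ onto $\epsilon(\Hom(E_v,E_v^\perp))$ is exactly $\so(E_v)\oplus\so(E_v^\perp)$, so this vanishing is precisely the membership statement \eqref{eq:jinv}; the equivalence with the conditions of Lemma \ref{lemma:jinv} then follows from that lemma.

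I do not expect a deep obstacle here, as the proof is essentially a sequence of substitutions. The only points requiring care are the legitimacy of passing from sections to pointwise data, which is guaranteed by the tensoriality of $[-,-]^{\mathrm{vert}}$, and the surjectivity claim that, as $X,Y$ range over $\Gamma(B)$, the projected vectors $w_1,w_2$ exhaust $E_v^\perp$, which follows from the fact that $d\pi_v$ restricts to an isometry $B_v\to E_v^\perp$.
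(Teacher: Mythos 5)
Your proof is correct and follows exactly the route the paper intends: the paper declares this proof ``straightforward and therefore omitted'' (deferring details to arXiv:2203.04233v2) precisely because it amounts to chaining Corollary \ref{cor:cond2}, the curvature formula \eqref{eq:vertical-to-curvature2}, the defining relation \eqref{eq:BJv}, and the orthogonal decomposition \eqref{eq:ortho}, which is what you do. Your explicit attention to the tensoriality of $[-,-]^{\mathrm{vert}}$ and to the surjectivity of $d\pi_v\colon B_v\to E_v^\perp$ supplies exactly the two pointwise-reduction steps the omitted proof relies on.
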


The proofs of Lemma \ref{lemma:jinv} and Proposition \ref{prop:ver32} are straightforward and therefore omitted. Detailed proofs can be found in  arXiv:2203.04233v2.

\begin{remark}\label{rem:verprob}  In \cite{Verbitsky2011}  Verbitsky also   expresses  the  integrability condition   for the CR-twistor space  over   a  Riemannian $(M^7, g)$  endowed  with  an associative  3-form $\varphi$ in terms  of   constraints on the   curvature of the underlying Riemannian  manifold $(M^7, g)$.   The Condition (ii) in  \cite[Proposition 3.2]{Verbitsky2011}  is  equivalent to our condition \eqref{eq:jinv3a}.  But    his  assertion in \cite[Proposition 3.2]{Verbitsky2011} that  this condition is equivalent  to   the condition that  $R(w_i\wedge  w_j)$ takes value  in   the Lie algebra $\g_2$  is not correct.  In fact,  that assertion  also  contradicts  a   related statement in \cite[Theorem 11.1]{SW2017}. 
\end{remark}

\begin{examples}\label{ex:CR1LeBrun} (1) In the case $(r,m)=(1,2n)$, the vector $w_4$ in  equation \eqref{eq:jinv3a} is neccessarily 0, so  \eqref{eq:jinv3a} is trivially satisfied.
	
(2)  In the case $(r,m)=(m-1,m)$, the vector space $E_v ^\perp$ is of  real dimension $m-(r-1)= 2$. Therefore,   $\so  (E_v ^\perp)$ is an abelian Lie algebra and the  second condition in Lemma \ref{lemma:jinv} is trivially satisfied. 

(3) Let $E_w$ be the oriented 2-plane in $T_{\pi(v)}M$ spanned by the ordered basis $(w_3,w_4)$. If $R(w_3,w_4)\in   \so (E_{w})\subset \so(T_x M)$ for any $w_3,w_4\in T_{\pi(v)}M$, then \eqref{eq:jinv3a}  is automatically satisfied. In the later part  of this section,  we will see that for 2-fold vector cross products on 7-dimensional manifolds and for 3-fold vector cross products on 8-dimensional manifolds the condition $R_{\pi(v)}(w_3,w_4)\in   \so (E_{w})$ for the Riemannian curvature is also necessary. 
\end{examples}

It   follows  from     Example \ref{ex:CR1LeBrun}  and Proposition \ref{prop:ver32} that   the    Condition \eqref{CR1} is non-trivial only for  two cases  $(r, m) = (2, 7)$ and    $(r, m)  = (3, 8)$.

\subsection{An infinite system of linear conditions for $R$}

Equation \eqref{eq:jinv3a} can be interpreted as a system of linear conditions for a section of a certain vector bundle over $M$. 

Let $V$ be an Euclidean space  endowed with an $r$-fold VCP.
For any $w\in \mathbb{Gr} ^+ (2, V)$   let
\begin{equation}\label{eq:CRcommuten}
\mathcal{R}_w := \{ A_w \in  \so (V)|\, [\Pi_{\so(E_v^\perp)}A_w,J_{E_v^\perp}]=0\}
\end{equation}
for any $v\in \mathbb{Gr} ^+ (r-1, V) $ with $\dim(E_v\cap E_w)=1$  and $\dim(E_v^\perp\cap E_w)=1$.

The following Lemma is immediate from the definition of the subspaces $\mathcal{R}_w$  and   Proposition \ref{prop:ver32}.
\begin{lemma}\label{lem:subrepresentation}      The first  condition  \eqref{CR1}  for the CR-integrability  of  the  CR-twistor  space $({\mathbb{G}}, B, J_{g,\chi})$  over  a manifold  $(M, g, \chi)$ holds if and only  if    for any $ w \in   \mathbb{Gr} ^+ (2, TM)$ we have
	$$R(w) \in  \Rr_w.$$

\end{lemma}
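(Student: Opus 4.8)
The plan is to deduce the statement directly from Proposition \ref{prop:ver32} by a purely pointwise, linear-algebraic reorganization of quantifiers, which is why it can be stated as immediate. First I would invoke Proposition \ref{prop:ver32} together with its equivalent form \eqref{eq:jinv3a} in Lemma \ref{lemma:jinv}(2): the first CR-integrability condition \eqref{CR1} holds if and only if for every $v\in\mathbb{G}$, every $w_3\in E_v^\perp$ and every $w_4\in E_v$ one has $[\Pi_{\so(E_v^\perp)}R_{\pi(v)}(w_3,w_4),J_{E_v^\perp}]=0$. Fixing the base point $x=\pi(v)$ and working in the Euclidean space $V=T_xM$, it then suffices to show that this family of conditions, indexed by triples $(v,w_3,w_4)$, coincides with the family $\{R(w)\in\mathcal{R}_w\}_{w\in\mathbb{Gr}^+(2,V)}$.

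The key step is the elementary correspondence between the two index sets. Given a triple $(v,w_3,w_4)$ with $0\neq w_3\in E_v^\perp$ and $0\neq w_4\in E_v$, the vectors $w_3,w_4$ are orthogonal, hence linearly independent, and span an oriented $2$-plane $E_w$; since $w_3\in E_v^\perp$ and $w_4\in E_v$ one checks at once that $\dim(E_v\cap E_w)=1$ and $\dim(E_v^\perp\cap E_w)=1$. By bilinearity of $R$ and linearity of both the bracket and the projection, the left-hand side of \eqref{eq:jinv3a} for the triple $(v,w_3,w_4)$ depends only on the $2$-vector $w_3\wedge w_4$, i.e. only on the oriented $2$-plane $w=E_w$ together with the chosen $v$; by the very definition \eqref{eq:CRcommuten} of $\mathcal{R}_w$ this condition is exactly $R(w)\in\mathcal{R}_w$ (with that same $v$ used to form $J_{E_v^\perp}$ and $\Pi_{\so(E_v^\perp)}$). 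Conversely, for $w\in\mathbb{Gr}^+(2,V)$ and any $v$ with $\dim(E_v\cap E_w)=\dim(E_v^\perp\cap E_w)=1$, the two orthogonal lines $E_v\cap E_w$ and $E_v^\perp\cap E_w$ span $E_w$, so $w=w_3\wedge w_4$ for suitable $w_3\in E_v^\perp\cap E_w$ and $w_4\in E_v\cap E_w$, recovering a triple of the previous type. (Triples with $w_3$ or $w_4$ zero give $R(w_3,w_4)=0$ and are vacuous.)

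To make sure the second family exhausts all of $\mathbb{Gr}^+(2,V)$, I would record that every oriented $2$-plane $E_w=\langle a,b\rangle$, with $a,b$ orthonormal, admits a valid $v$: in the only nontrivial cases $r\geq 2$ (for $r=1$ and $r=m-1$ the condition is vacuous by Example \ref{ex:CR1LeBrun} and Proposition \ref{prop:ver32}), and since $\dim b^\perp=m-1\geq r-1$, one may choose an oriented $(r-1)$-plane $E_v\subset b^\perp$ containing $a$. Then $a\in E_v\cap E_w$ and $b\in E_v^\perp\cap E_w$, while $b\notin E_v$ forces $\dim(E_v\cap E_w)=1$ and $a\notin E_v^\perp$ forces $\dim(E_v^\perp\cap E_w)=1$. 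Thus the two index sets correspond and, grouping the conditions by $w$, the two universally quantified statements become literally the same; combining this with Proposition \ref{prop:ver32} yields the asserted equivalence.

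I expect the only point requiring care to be the bookkeeping of the clause ``for any $v$'' in the definition \eqref{eq:CRcommuten}: it must be read as ranging over all admissible $v$ for the given $w$, and one must check that the resulting constraint on $R(w)$ is insensitive to the choice of $v$. This insensitivity is precisely what the matching with the universally quantified condition \eqref{eq:jinv3a} delivers, so no genuine geometric obstacle arises and the lemma is indeed immediate.
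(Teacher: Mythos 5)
Your proposal is correct and follows exactly the route the paper takes: the paper declares the lemma ``immediate from the definition of the subspaces $\mathcal{R}_w$ and Proposition \ref{prop:ver32}'', and your argument is precisely that observation with the quantifier bookkeeping (the bijection between triples $(v,w_3,w_4)$ and pairs $(w,v)$ with $v$ admissible for $w$) written out. One small remark: no ``insensitivity to the choice of $v$'' needs to be checked, since $\mathcal{R}_w$ is by definition cut out by the conjunction of the conditions over \emph{all} admissible $v$, so grouping the conditions of \eqref{eq:jinv3a} by the plane $w$ already yields the equivalence verbatim.
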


\begin{remark}\label{rem:CR-via-Stiefel}
The condition $\dim(E_v\cap E_w)=1$  and $\dim(E_v^\perp\cap E_w)=1$ means that there exists an orthonormal frame $(w_1,\dots,w_r)$ with $(w_2,\dots, w_{r})$ an orthonormal basis for $E_v$ and $(w_1,w_2)$ an orthonormal basis for $E_w$. Therefore, the first integrability   condition \eqref{CR1}  holds for $(M, g, \chi)$ if and only if   for any  $x \in M$,  and any   orthonormal frame $(w_1,\dots,w_r)$ in $T_xM$  with $(w_2,\dots, w_{r})$ an orthonormal basis for $E_v$ and $(w_1,w_2)$ an orthonormal basis for $E_w$ we have
	\begin{equation}\label{eq:CRcommute8}
	[\Pi_{\so(E_{w_2\wedge\cdots\wedge w_{r}}^\perp)}R_g(x;w_1\wedge w_2),J_{E_{w_2\wedge\cdots\wedge w_{r}}^\perp}]=0 \in \so (E_{w_2\wedge\cdots\wedge w_{r}}^\perp),
	\end{equation}
where $R_g(x;-)$ denotes the curvature tensor of $(M,g)$ at the point $x$.
\end{remark}

\begin{definition} Let $(V,\langle,\rangle)$ be an Euclidean vector space endowed with an $r$-fold VCP $\chi$. We denote by $\mathcal{AC}_{CR1}(V,\chi)\subseteq \mathcal{AC}(V)$ the subspace of 
$\mathcal{AC}(V)$ consisting of those elements $R\in \mathcal{AC}(V)$ such that $R(w)\in \mathcal{R}_w$, for any $w\in \mathbb{Gr} ^+ (2, V)$,
i.e., such that \eqref{eq:CRcommute8} holds for any   orthonormal frame $(w_1,\dots,w_r)$ in $V$.

\end{definition}

\begin{remark}The conditions defining the subspace $\mathcal{AC}_{CR1}(V,\chi)$ of $\mathcal{AC}(V)$ are an infinite system of linear equations. This is the infinite system the title of this Section alludes to.
\end{remark}

\begin{lemma}\label{lem:rid}
We have $R^{\rm Id} \in \mathcal{AC}_{CR1}(V,\chi)$.
\end{lemma}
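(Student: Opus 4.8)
The plan is to verify directly the single defining condition of $\mathcal{AC}_{CR1}(V,\chi)$ for the operator $R^{\rm Id}$. Since the excerpt already records that $R^{\rm Id}$ is an algebraic curvature operator (of constant sectional curvature), we have $R^{\rm Id}\in \mathcal{AC}(V)$, so it remains only to check that $R^{\rm Id}(w)\in \mathcal{R}_w$ for every $w\in \mathbb{Gr}^+(2,V)$. By Lemma \ref{lem:subrepresentation} and Remark \ref{rem:CR-via-Stiefel}, this amounts to showing that \eqref{eq:CRcommute8} holds for $R^{\rm Id}$ and every orthonormal frame $(w_1,\dots,w_r)$ with $(w_2,\dots,w_r)$ a basis of $E_v$ and $(w_1,w_2)$ a basis of $E_w$.

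First I would read off the operator $R^{\rm Id}(w_1,w_2)\in \so(V)$ from \eqref{eq:rid1}. Evaluating the formula on the frame gives $R^{\rm Id}(w_1,w_2)w_1=-w_2$, $R^{\rm Id}(w_1,w_2)w_2=w_1$, and $R^{\rm Id}(w_1,w_2)u=0$ for every $u$ orthogonal to both $w_1$ and $w_2$; that is, $R^{\rm Id}(w_1,w_2)$ is the infinitesimal rotation of the oriented $2$-plane $E_w=\mathrm{span}(w_1,w_2)$ and lies in $\so(E_w)\subseteq \so(V)$.

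The key observation is the position of $w_1,w_2$ relative to the splitting $V=E_v\oplus^\perp E_v^\perp$: in the frame above one has $w_2\in E_v$ and $w_1\in E_v^\perp$. Hence $R^{\rm Id}(w_1,w_2)$ sends $E_v^\perp$ into $E_v$ (via $w_1\mapsto -w_2$) and $E_v$ into $E_v^\perp$ (via $w_2\mapsto w_1$), while annihilating the remaining basis vectors. With respect to the orthogonal decomposition \eqref{eq:ortho} this means that $R^{\rm Id}(w_1,w_2)$ has vanishing $\so(E_v)$- and $\so(E_v^\perp)$-components and lies entirely in the off-diagonal summand $\epsilon(\Hom(E_v,E_v^\perp))$. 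In particular $\Pi_{\so(E_v^\perp)}R^{\rm Id}(w_1,w_2)=0$, so the commutator in \eqref{eq:CRcommute8} (equivalently \eqref{eq:jinv3a}) vanishes identically, whence $R^{\rm Id}(w)\in \mathcal{R}_w$. Alternatively, this conclusion is immediate from Example \ref{ex:CR1LeBrun}(3), since we have just shown that $R^{\rm Id}(w_3,w_4)\in \so(E_w)$ for all $w_3,w_4$.

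I do not expect any genuine obstacle here: the entire content is the elementary identification of $R^{\rm Id}(w_1,w_2)$ with the rotation generator of $E_w$, together with the fact that $w_1$ and $w_2$ lie on opposite sides of the $E_v\oplus^\perp E_v^\perp$ splitting, which forces the diagonal blocks, and in particular the $\so(E_v^\perp)$-projection, to vanish. The only point requiring a line of care is bookkeeping the frame conventions of Remark \ref{rem:CR-via-Stiefel} so that indeed $w_1\in E_v^\perp$ and $w_2\in E_v$.
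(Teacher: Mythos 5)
Your proof is correct and takes essentially the same route as the paper's: both verify \eqref{eq:CRcommute8} by showing that $\Pi_{\so(E_v^\perp)}R^{\rm Id}(w_1,w_2)$ vanishes outright, so the commutator in \eqref{eq:jinv3a} is trivially zero. The paper checks this via $\langle R^{\mathrm{Id}}(w_1,w_2)z_1,z_2\rangle=0$ for all $z_1,z_2\in E_v^\perp$, immediate from \eqref{eq:rid1} since $w_2\in E_v$, while you phrase the same computation geometrically by identifying $R^{\rm Id}(w_1,w_2)$ with the rotation generator of $E_w$, which equals $\epsilon(w_2^\sharp\otimes w_1)$ and hence lies in the off-diagonal summand of \eqref{eq:ortho} because $w_1\in E_v^\perp$ and $w_2\in E_v$ --- an equivalent formulation, not a different argument.
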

\begin{proof}
For any orthonormal $r$-frame $w$ one has $\Pi_{\so(E_{w_2\wedge\cdots\wedge w_{r}}^\perp)}R^{\mathrm{Id}}_{w_1\wedge w_2}=0$. Indeed, this identity is equivalent to the condition
\[
\langle R^{\mathrm{Id}}(w_1,w_2)z_1,z_2\rangle =0, \qquad \forall z_1,z_2\in E_{w_2\wedge\cdots\wedge w_{r}}^\perp
\]
which in turn is immediate from the definition of $R^{\mathrm{Id}}$ (see Notation and Conventions).
\end{proof}

\begin{corollary}
If $\dim V\geq 2$ then $\dim \mathcal{AC}_{CR1}(V,\chi)\geq 1$.
\end{corollary}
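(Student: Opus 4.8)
The plan is to observe that this corollary is an immediate consequence of Lemma \ref{lem:rid} together with the elementary fact that the algebraic curvature operator $R^{\rm Id}$ is nonzero as soon as $\dim V \geq 2$. First I would recall that Lemma \ref{lem:rid} already exhibits an explicit element of $\mathcal{AC}_{CR1}(V,\chi)$, namely $R^{\rm Id}$. Since a vector subspace that contains even a single nonzero vector has dimension at least one, it therefore suffices to check that this distinguished element is not the zero tensor.

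To verify that $R^{\rm Id}\neq 0$ under the hypothesis $\dim V\geq 2$, I would choose an orthonormal pair $w_1,w_2\in V$, which exists precisely because $\dim V\geq 2$, and evaluate the defining formula \eqref{eq:rid1}. Taking $w_3=w_2$ yields
$$
R^{\rm Id}(w_1,w_2)w_2=\langle w_2,w_2\rangle\, w_1-\langle w_1,w_2\rangle\, w_2=w_1\neq 0,
$$
so $R^{\rm Id}$ does not vanish as an element of $\bigwedge^2 V^\ast\otimes\so(V)$. Combining this nonvanishing with the membership $R^{\rm Id}\in\mathcal{AC}_{CR1}(V,\chi)$ supplied by Lemma \ref{lem:rid}, I conclude that $\dim\mathcal{AC}_{CR1}(V,\chi)\geq 1$.

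There is no genuine obstacle here: the statement is a one-line corollary that merely repackages the content of Lemma \ref{lem:rid} as a dimension count, and the only point requiring (trivial) verification is the nonvanishing of $R^{\rm Id}$, which can fail only in the degenerate range $\dim V\leq 1$ that the hypothesis excludes.
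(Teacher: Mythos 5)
Your proposal is correct and matches the paper's (implicit) argument: the corollary is stated there as an immediate consequence of Lemma \ref{lem:rid}, i.e., membership of $R^{\rm Id}$ in $\mathcal{AC}_{CR1}(V,\chi)$ together with its evident nonvanishing for $\dim V\geq 2$. Your explicit verification $R^{\rm Id}(w_1,w_2)w_2=w_1\neq 0$ for an orthonormal pair simply spells out the step the paper leaves tacit.
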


\begin{remark}\label{rem:montecarlo}
If $I$ is a set of $N$ orthonormal $r$-frames in $V$ then we can consider the set
\[
\mathcal{AC}^{[I]}_{CR1}(V,\chi)=\{R\in \mathcal{AC}(V) \,|\,  [\Pi_{\so(E_{w_2\wedge\cdots\wedge w_{r}}^\perp)}R_{w_1\wedge w_2},J_{E_{w_2\wedge\cdots\wedge w_{r}}^\perp}]=0, \forall w\in I\}.
\]
Clearly, for any $I$ one has $\mathcal{AC}_{CR1}(V,\chi)\subseteq \mathcal{AC}^{[I]}_{CR1}(V,\chi)$ and so if $\dim V\geq 2$ then for any $I$ one has $1\leq \dim \mathcal{AC}_{CR1}(V,\chi)\leq \dim \mathcal{AC}^{[I]}_{CR1}(V,\chi)$.
This paves the way to determining $\dim \mathcal{AC}_{CR1}(V,\chi)$ via Monte Carlo methods: one randomly picks a finite subset $I$ and computes the corresponding dimension of $\mathcal{AC}^{[I]}_{CR1}(V,\chi)$. If this happens to be equal to 1, then one sees that necessarily $\dim \mathcal{AC}_{CR1}(V,\chi)=1$.
 \end{remark}

\begin{proposition}\label{prop:cr17}
Let $V$ be a 7-dimensional Euclidean vector space endowed with a 2-fold VCP $\chi$. Then  $\dim \mathcal{AC}_{CR1}(V,\chi)=1$.  In particular, $\mathcal{AC}_{CR1}(V,\chi)$ is spanned by $R^{\rm Id}$.
\end{proposition}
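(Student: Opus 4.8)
The inequality $\dim \mathcal{AC}_{CR1}(V,\chi)\geq 1$ is Lemma \ref{lem:rid}, so the entire content is the reverse bound $\dim \mathcal{AC}_{CR1}(V,\chi)\leq 1$. The plan is to replace the infinite system of conditions \eqref{eq:CRcommute8} (one for each orthonormal frame) by a finite one. By Remark \ref{rem:montecarlo}, for \emph{every} finite set $I$ of orthonormal $2$-frames one has $\mathcal{AC}_{CR1}(V,\chi)\subseteq \mathcal{AC}^{[I]}_{CR1}(V,\chi)$, and $\mathcal{AC}^{[I]}_{CR1}(V,\chi)$ is by construction the solution space of a finite homogeneous linear system. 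Hence it suffices to exhibit a single finite $I$ for which $\dim \mathcal{AC}^{[I]}_{CR1}(V,\chi)=1$: combined with Lemma \ref{lem:rid}, the sandwich $1\leq \dim \mathcal{AC}_{CR1}(V,\chi)\leq \dim \mathcal{AC}^{[I]}_{CR1}(V,\chi)=1$ forces equality, and since $R^{\mathrm{Id}}\neq 0$ it must span.

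To set up the finite system concretely I would fix the standard associative $3$-form on $V=\R^7$ with integer coefficients; this pins down $\chi$ through $\varphi_\chi(x,y,z)=\langle \chi(x\wedge y),z\rangle$ and, for each unit vector $w_2$, the complex structure $J_{E_{w_2}^\perp}(z)=\chi(w_2\wedge z)$ on $E_{w_2}^\perp=w_2^\perp$, all by explicit rational formulas. I would then fix a basis of $\mathcal{AC}(V)$ (the $196$-dimensional space of algebraic curvature operators, cf. the dimension formula in Notation and Conventions), and for each frame $(w_1,w_2)\in I$ expand the commutator condition $[\Pi_{\so(E_{w_2}^\perp)}R(w_1\wedge w_2),J_{E_{w_2}^\perp}]=0$ into linear equations in the coordinates of $R$, stacking them into one matrix $M_I$ whose kernel is $\mathcal{AC}^{[I]}_{CR1}(V,\chi)$. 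To turn this into a proof rather than a numerical experiment, the frames in $I$ should be chosen with coordinates in $\Q$ (or in a fixed quadratic field, if one wants genuinely generic frames), so that $\operatorname{rank}M_I$ is computed exactly by Gaussian elimination over $\Q$.

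The crux is choosing $I$ so that $\operatorname{rank}M_I=195$, i.e. $\dim\ker M_I=1$. The representation theory of $\G=\Stab(\varphi_\chi)$ both predicts the answer and locates the difficulty: since the conditions \eqref{eq:CRcommute8} are $\G$-equivariant, $\mathcal{AC}_{CR1}(V,\chi)$ is a $\G$-invariant subspace of $\mathcal{AC}(V)$, and one computes the $\G$-decomposition $\mathcal{AC}(\R^7)\cong V_1\oplus 2\,V_{27}\oplus V_{64}\oplus V_{77}$ (dimensions $1,27,27,64,77$). The trivial summand $V_1=\langle R^{\mathrm{Id}}\rangle$ lies in $\mathcal{AC}_{CR1}(V,\chi)$, and the expectation is that every other summand is excluded. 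The genuine obstacle is the multiplicity: because $V_{27}$ occurs twice (once in the traceless Ricci part $\mathrm{Sym}^2_0\R^7$ and once inside the Weyl part), a bare character or dimension count cannot rule out a ``diagonal'' copy of $V_{27}$ surviving in $\mathcal{AC}_{CR1}(V,\chi)$; deciding this would require understanding the restriction of the CR1 map to the full $2\,V_{27}$ isotypic block. Solving the explicit finite system over $\Q$ settles exactly this point in one stroke. Following the Monte Carlo heuristic of Remark \ref{rem:montecarlo}, a modest number of random frames already produces maximal rank; re-running the decisive rank computation in exact arithmetic then certifies $\dim\ker M_I=1$ and hence $\dim\mathcal{AC}_{CR1}(V,\chi)=1$. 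The explicit frames and the corresponding {\tt sagemath} verification are recorded in the Appendix.
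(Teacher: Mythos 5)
Your proposal is correct and follows essentially the same route as the paper: the lower bound from Lemma \ref{lem:rid}, the sandwich $\mathcal{AC}_{CR1}(V,\chi)\subseteq \mathcal{AC}^{[I]}_{CR1}(V,\chi)$ from Remark \ref{rem:montecarlo}, and a finite set of rational orthonormal frames whose stacked linear system is solved by exact rank computation (the paper's Appendix code likewise produces rational frames via stereographic projection and reflections, clears denominators, and computes ranks over $\Q$, so its ``Monte Carlo'' verification is already a certificate, not a numerical experiment). Your representation-theoretic aside --- that the multiplicity-two occurrence of the $27$-dimensional $\G$-irreducible blocks a proof by invariants alone --- is not in the paper's proof but accurately explains why the authors resort to computer algebra and matches their concluding remark that a purely representation-theoretic proof of this case remains open.
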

\begin{proof}
The pair $(V,\chi)$ can be identified with the 7-dimensional space $\mathrm{Im}\O$ of imaginary octonions  endowed with their standard VCP. In this model one can easily implement a Monte Carlo computation of $\dim \mathcal{AC}_{CR1}(V,\chi)=1$ as descrbed in Remark \ref{rem:montecarlo}. Implementation shows that already with 100 random points one generally obtains  $\dim \mathcal{AC}_{CR1}(V,\chi)=1$. A {\tt sagemath} code implementing this computation is provided and commented in the Appendix. It runs in about 50 minutes on a 2.4 Ghz 8core.
\end{proof}

Denote by $\times$ the 2-fold  VCP on $\Im \O$, see \eqref{eq:vcp2}. Proposition  \ref{prop:cr17} implies the following    corollary immediately
\begin{corollary}\label{cor:cr17}  If $R \in \mathcal{AC}_{CR1}(\Im \O, \times)$ then  $R(w) \in \so (E_w)$ for any  $ w \in \mathbb{Gr}^+  (2, \Im \O)$.
\end{corollary}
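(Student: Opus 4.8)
The plan is to deduce the corollary directly from the dimension count in Proposition~\ref{prop:cr17} together with the explicit formula \eqref{eq:rid1} for $R^{\rm Id}$. Since Proposition~\ref{prop:cr17} asserts that $\mathcal{AC}_{CR1}(\Im\O,\times)$ is one-dimensional and spanned by $R^{\rm Id}$, any $R\in \mathcal{AC}_{CR1}(\Im\O,\times)$ can be written as $R=\lambda\, R^{\rm Id}$ for some real scalar $\lambda$. Because the property ``$R(w)\in\so(E_w)$'' is preserved under scalar multiplication, it therefore suffices to verify the claim for $R^{\rm Id}$ itself.

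To carry out this verification, I would fix $w\in\mathbb{Gr}^+(2,\Im\O)$ and choose an orthonormal pair $(w_1,w_2)$ spanning $E_w$, so that $R^{\rm Id}(w)=R^{\rm Id}(w_1,w_2)$. Using \eqref{eq:rid1} one reads off that $R^{\rm Id}(w_1,w_2)w_1=-w_2$, that $R^{\rm Id}(w_1,w_2)w_2=w_1$, and that $R^{\rm Id}(w_1,w_2)z=0$ for every $z\in E_w^\perp$, since $\la w_1,z\ra=\la w_2,z\ra=0$ in that case. Hence $R^{\rm Id}(w_1,w_2)$ preserves $E_w$ (acting there as the generator of the oriented rotation) and annihilates $E_w^\perp$; in particular it lies in $\so(E_w)\subset\so(\Im\O)$. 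Scaling by $\lambda$ then gives $R(w)=\lambda\,R^{\rm Id}(w)\in\so(E_w)$, as required.

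There is no genuine obstacle in this argument: the entire computational weight of the statement is carried by Proposition~\ref{prop:cr17}, whose proof rests on the Monte Carlo dimension estimate implemented in {\tt sagemath}. Once that proposition is available, the corollary is a one-line consequence of the structural fact that $R^{\rm Id}$, by its very definition in \eqref{eq:rid1}, has the $2$-plane $E_w$ as an invariant subspace while vanishing on the complementary directions. The only point demanding a small amount of care is the bookkeeping of orientations and orthonormality of $(w_1,w_2)$, which is harmless because $\mathbb{Gr}^+(2,\Im\O)$ is identified with decomposable unit $2$-vectors and the conclusion $R^{\rm Id}(w)\in\so(E_w)$ is manifestly independent of the chosen orthonormal basis of $E_w$.
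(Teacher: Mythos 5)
Your proof is correct and follows exactly the route the paper intends: the paper deduces the corollary ``immediately'' from Proposition~\ref{prop:cr17}, i.e., from writing $R=\lambda R^{\rm Id}$ and observing via \eqref{eq:rid1} that $R^{\rm Id}(w_1,w_2)$ rotates $E_w$ and annihilates $E_w^\perp$, hence lies in $\so(E_w)$. You have merely made explicit the one-line verification the paper leaves implicit, so there is nothing to correct.
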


\section{Proof of Theorem \ref{thm:main}(1--3)}\label{sec:spin7}

 In this section we     define    the   torsion  tensor $T\in \Gamma (\bigwedge^2 B^* \otimes T{\mathbb{G}})$   on  the total space  ${\mathbb{G}}$  over  a  Riemannian manifold  $(M, g)$ endowed  with a    VCP.  Then, using the  results in the previous section,  we  give  a  proof of  Theorem  \ref{thm:main} (1--2) and  of  Theorem \ref{thm:main} (3) for the (2,7) case.
To prove Theorem \ref{thm:main} (3) for the (3,8) case
we  reduce  the  Equation      \eqref{eq:jinv3a}  for  the case $(r, m) =  (3, 8)$  to the  case     $(r, m) = (2, 7)$    and  utilize the   symmetry of  the equation  \eqref{CR1} as well as  ad-hoc techniques.

  Let  $({\mathbb{G}}, B, J_{g,\chi})$  be   the  CR twistor  space over   a  manifold $(M, g)$ endowed with a   VCP   $\chi$.  
  Define a section
$$ T: {\mathbb{G}}\to \bigwedge ^2  B^* \otimes   T {\mathbb{G}}$$ 
by
\begin{align}
T_v(X, Y) ^\ver  =   ([J_{g,\chi}X,J_{g,\chi}Y]-[X,Y])_v ^ \ver,\label{eq:tver}\\
  T_v(X, Y) ^\hor =  \Pi_B ([J_{g,\chi}X,J_{g,\chi}Y]-[X,Y])-J_{g,\chi}\circ \Pi _B([X,J_{g,\chi}Y]+[J_{g, \chi}X,Y])_v.\nonumber\\
  \label{eq:thor}
  \end{align}
for any $v \in {\mathbb{G}}$ and any $X, Y \in   \Gamma (B)$. By \eqref{eq:vertical-to-curvature},  the RHS of  \eqref{eq:tver}  depends only on $X(v), Y(v)$,  thus  $T_v ^\ver $ is a well-defined  tensor.  We  verify immediately, or utilize   \eqref{eq:FNCR2} below, to conclude that that   the RHS  of  \eqref{eq:thor}, like the Nijenhuis tenor,   depends only  on the value $X(v), Y(v)$.  
Thus  $T$ is a tensor   on   the manifold   ${\mathbb{G}}$. Now,

\begin{itemize}
\item Theorem \ref{thm:main}(1) follows  immediately from   Corollary \ref{cor:cond2}, taking into account \eqref{eq:tver}.
\item Theorem \ref{thm:main}(2) is Example \ref{ex:CR1LeBrun} (1--2).
\item Theorem  \ref{thm:main}(3) for the (2,7) case follows from  Proposition \ref{prop:cr17},  Lemma \ref{lem:rid}, \eqref{eq:tver}, noting that if $\dim M\geq 3$ then a metric $g$ on $M$ which satisfies the condition  $R_g(x) =  \lambda  (x)R_{\Id}$ is a constant   curvature metric by the Schur  lemma.
\end{itemize}

To conclude the  proof  of Theorem \ref{thm:main} (3), i.e., to prove it for the (3,8) case, we need a preparatory result.

Let $\chi$ be the 3-fold  VCP on $\R^8 = \O$, see  \eqref{eq:vcp3}  and $\times$ the  2-fold  VCP on $\R^7 = \Im \O$, and let
$$t\colon \bigwedge^2 \O^*\otimes \so (\O)\to\bigwedge^2(\Im \O)^*\otimes \so (\Im\O)$$ be the restriction/projection operator defined by
\[
t(\alpha\otimes  A)=\alpha\bigr\vert_{\Im\O}\otimes \Pi_{\so(\Im\O)}A.
\]
It induces by restriction a map
	\[
	t\colon \mathcal{AC}_{CR1}(\O,\chi)\to \mathcal{AC}_{CR1}(\Im\O,\times).
	\]

\begin{proposition}\label{prop:red2}   Assume  that $R \in \mathcal{AC}_{CR1}(\O, \chi)$. 
Then for any  $w_1 \wedge w_2 \in  \mathbb{Gr} ^+ (2, \O)$  we have
$$R(w_1 \wedge w_2) \in \so  (E_{w_1 \wedge w_2}).$$
\end{proposition}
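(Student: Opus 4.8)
The plan is to reduce the $8$-dimensional statement to the $7$-dimensional Corollary~\ref{cor:cr17} by exploiting the $\Spin(7)$-symmetry and then analysing $R(w_1\wedge w_2)$ one isotypical block at a time. Since the Cayley form, and hence $\chi$, is fixed by $\Spin(7)$, the subspace $\mathcal{AC}_{CR1}(\O,\chi)$ is preserved by the action $R\mapsto g\cdot R$, $g\in\Spin(7)$, where $(g\cdot R)(u,v)=gR(g^{-1}u,g^{-1}v)g^{-1}$; moreover $\Spin(7)$ acts transitively on oriented orthonormal $2$-frames of $\O$. As the desired property transforms equivariantly, namely $R(w_1\wedge w_2)\in\so(E_{w_1\wedge w_2})$ holds iff $(g\cdot R)(gw_1\wedge gw_2)\in\so(E_{gw_1\wedge gw_2})$, it suffices to prove it for every $R\in\mathcal{AC}_{CR1}(\O,\chi)$ under the extra assumption $w_1,w_2\in\Im\O$. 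Fixing such a frame I would use the orthogonal splitting $\O=\mathbb{R}\cdot 1\oplus\Im\O$ and the corresponding decomposition $\so(\O)=\so(\Im\O)\oplus\b$ of \eqref{eq:ortho}, with boost part $\b=\epsilon(\Hom(\mathbb{R}\cdot 1,\Im\O))$. Since $w_1,w_2\in\Im\O$ one has $\so(E_{w_1\wedge w_2})\subset\so(\Im\O)$, so I must show separately that $\Pi_{\so(\Im\O)}R(w_1\wedge w_2)\in\so(E_{w_1\wedge w_2})$ and that the boost part $\Pi_{\b}R(w_1\wedge w_2)$ vanishes.

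The first block is handled by the reduction map $t$. By hypothesis $t(R)\in\mathcal{AC}_{CR1}(\Im\O,\times)$, and Proposition~\ref{prop:cr17} tells us this space is spanned by $R^{\mathrm{Id}}$, so $t(R)=\lambda R^{\mathrm{Id}}$ for some scalar $\lambda$. As $t(R)(w_1,w_2)=\Pi_{\so(\Im\O)}R(w_1,w_2)$ this gives $\Pi_{\so(\Im\O)}R(w_1\wedge w_2)\in\so(E_{w_1\wedge w_2})$ at once, and more usefully it records that every purely imaginary component of $R$ is the corresponding component of $\lambda R^{\mathrm{Id}}$, i.e. $\langle R(e,f)g,h\rangle=\lambda\langle R^{\mathrm{Id}}(e,f)g,h\rangle$ for all $e,f,g,h\in\Im\O$. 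This is the concrete form of the announced reduction of the $(3,8)$ case to the $(2,7)$ case.

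For the boost block I would write $A=R(w_1\wedge w_2)$ and $\beta_y=\langle A\,1,y\rangle=R(w_1,w_2,1,y)$ for $y\in\Im\O$, and aim at $\beta=0$. Here the symmetry of \eqref{CR1} enters: for a unit $c\in\Im\O$ with $c\perp w_1,w_2$ I apply \eqref{eq:CRcommute8} to the orthonormal frame $(w_1,w_2,c)$, for which $E_w=\mathrm{span}(w_1,w_2)$, $E_v=\mathrm{span}(w_2,c)$ and $E_v^\perp=\{w_2,c\}^{\perp}\ni 1$, with complex structure $J=\chi(w_2,c,-)$. Pairing the identity $[\Pi_{\so(E_v^\perp)}A,J]=0$ against $1$ and an arbitrary $u\in E_v^\perp\cap\Im\O$ yields $\langle A\,J1,u\rangle+\langle A\,1,Ju\rangle=0$. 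A short octonion computation gives $J1=\chi(w_2,c,1)=-(w_2\times c)\in\Im\O$, so the first term equals the purely imaginary component $-\lambda\langle R^{\mathrm{Id}}(w_1,w_2)(w_2\times c),u\rangle$, which vanishes because $\langle w_2,w_2\times c\rangle=0$ and $\langle w_2,u\rangle=0$ (as $u\in E_v^\perp$). Hence $\beta$ annihilates $\Pi_{\Im\O}(Ju)=\Pi_{\Im\O}\chi(w_2,c,u)$ for every admissible $c$ and $u$.

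The step I expect to be the main obstacle is to promote this to $\beta=0$, i.e. to check that the imaginary parts of the vectors $\chi(w_2,c,u)$ span $\Im\O$. With $w_2$ fixed, the map $(c,u)\mapsto\Pi_{\Im\O}\chi(w_2,c,u)$ is $\Stab_{\G}(w_2)\cong\SU(3)$-equivariant and takes values in the $6$-dimensional real module $\{w_2\}^{\perp}\cap\Im\O\cong\mathbb{C}^3$, which is $\SU(3)$-irreducible; since the image is nonzero it is everything, so $\beta$ vanishes on $\{w_2\}^{\perp}$. Running the argument with $w_1$ and $w_2$ interchanged shows $\beta$ also vanishes on $\{w_1\}^{\perp}$, and $\{w_1\}^{\perp}+\{w_2\}^{\perp}=\Im\O$ then forces $\beta=0$. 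The delicate point I will have to control carefully is the interference between the frame constraint $c\perp w_1$ and the $\SU(3)$-equivariance used for the spanning; this is exactly the kind of \emph{ad hoc} octonionic verification the theorem's proof relies on. Once $\Pi_{\b}A=0$ is established, combining it with the first block gives $R(w_1\wedge w_2)\in\so(E_{w_1\wedge w_2})$, and unwinding the $\Spin(7)$-reduction completes the proof.
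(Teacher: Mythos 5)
Your proposal shares the paper's skeleton for the first half of the argument and departs from it in the second half. Like the paper, you reduce by $\Spin(7)$-equivariance and transitivity to a frame inside $\Im\O$, and you use the restriction map $t$ together with the $7$-dimensional result to control the $\so(\Im\O)$-block of $A=R(w_1\wedge w_2)$ (you invoke Proposition~\ref{prop:cr17} in the form $t(R)=\lambda R^{\rm Id}$; the paper uses the slightly weaker Corollary~\ref{cor:cr17}, and in fact either suffices for your first-term cancellation). For the boost block the two routes differ. The paper shows that the boost component itself satisfies the commutator equations \eqref{eq:eq-for-U} (because the $\so(E_{i\wedge j})$-component does automatically, by Example~\ref{ex:CR1LeBrun}(3)), then normalizes the boost vector to $a_i\,1^\sharp\hat\otimes i+a_k\,1^\sharp\hat\otimes k$ using Bryant's transitivity of $\Stab_{\Spin(7)}(i\wedge j)$ on $S^1(E_{i\wedge j})\times S^5(E_{i\wedge j}^\perp)$, and kills $a_i,a_k$ by one explicit commutator computation with the Cayley form at $w=\ell$. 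You instead pair the commutator identity against $(1,u)$ to get $\langle A\,J1,u\rangle+\langle A\,1,Ju\rangle=0$, correctly compute $J1=-(w_2\times c)$, correctly kill the first term using the imaginary block ($\langle w_2,w_2\times c\rangle=0$ and $u\perp w_2$), and reduce everything to the claim that the vectors $\Pi_{\Im\O}\chi(w_2,c,u)$ span $\{w_2\}^\perp\cap\Im\O$; your swap $w_1\leftrightarrow w_2$ and the final observation $\{w_1\}^\perp+\{w_2\}^\perp=\Im\O$ are then fine. This trades the paper's appeal to Bryant's orbit theorem for a concrete spanning statement about the triple cross product, which is a legitimate and arguably more self-contained route.

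The one step that does not hold as written is the $\SU(3)$-irreducibility justification of the spanning claim, and you yourself flagged it as the weak point. The admissible pairs $(c,u)$ are constrained by $c\perp w_1$, and $\Stab_{\G}(w_2)\cong \SU(3)$ does not preserve $w_1$, so the span of the image is not a priori $\SU(3)$-invariant; the subgroup preserving all constraints is $\Stab_{\G}(w_1,w_2)\cong \SU(2)$, under which $\{w_2\}^\perp\cap\Im\O$ is \emph{reducible}, so invariance-plus-irreducibility cannot close the argument. Fortunately the spanning statement is true and checkable by a short computation of exactly the ad hoc kind the paper itself performs: for imaginary $x,y,z$ with $x\perp y$ and $z\perp y$ one derives from \eqref{eq:vcp3} that
\begin{equation*}
\Pi_{\Im\O}\,\chi(x,y,z)=-\tfrac12\bigl((x\times y)\times z+(y\times z)\times x\bigr),
\end{equation*}
and taking $(w_1,w_2)=(i,j)$, already the two choices $c=k$ and $c=\ell$, with $u$ running over the basis vectors $\ell,\ell i,\ell j,\ell k$ allowed for each, produce all of $\pm\ell,\pm\ell i,\pm\ell j,\pm\ell k$ together with multiples of $k$ and $i$, hence span $\{j\}^\perp\cap\Im\O$. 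With this lemma supplied in place of the $\SU(3)$ argument, your proof is complete and correct.
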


\begin{proof} 
Since  $\Spin(7)$ acts  transitively  on   $\mathbb{Gr}^+ (2, \O)$  and the  space  $\mathcal{AC}_{CR1}(\O, \chi)$  is   invariant under  the  $\Spin(7)$-action, it suffices  to  prove   Proposition \ref{prop:red2} for  $ w_1 = i$ and $w_2 = j$.
Since $t (R) \in \mathcal{AC}_{CR1}(\O, \times)$, taking into account Corollary \ref{cor:cr17},  we have $R(i\wedge j)\in \Pi_{\so(\Im\O)}^{-1}(\so(E_{i\wedge j}))$ and so
\begin{equation}\label{eq:redij}
R(i\wedge j)   =  R_1 (i\wedge j) + R_2 (i\wedge j),
\end{equation}
with  $R_1(i\wedge j)$ in $\so (E_{i\wedge j})$ and  $ R_2(i\wedge j)$ in $\epsilon (\Hom (E_1,   \Im  \O))$.  
We   shall show that  $R_2(i \wedge j) = 0$.
Let $\mathcal{U}$ be the $\Stab_{\Spin (7)} (i \wedge j)$-invariant subspace of $\so (\O)$ defined by
\begin{equation}\label{eq:eq-for-U}
\mathcal{U}=\{A \in  \so (\O)|\, [\Pi_{\so(E_{j\wedge w}^\perp)}A,J_{E_{j\wedge w}^\perp}]=0, \forall w\in E_{i\wedge j}^\perp\}.
\end{equation}
Since $R \in \mathcal{AC}_{CR1}(\O, \chi)$, we have $R(i\wedge j)\in \mathcal{U}$ and by Example \ref{ex:CR1LeBrun} (3), we have $R_1(i\wedge j)\in \mathcal{U}$. Therefore $R_2(i \wedge j)\in \mathcal{U}$.
  Now   we    write
 $$ R_2 (i \wedge j) = a_i   1^\sharp \hat \otimes i + a_j 1 ^\sharp \hat \otimes  j  +  a_k   1^\sharp \hat \otimes k + a_l 1 ^\sharp\hat \otimes  \ell   + b_i  1 ^\sharp \hat \otimes  \ell i  + b_j 1 ^\sharp \hat \otimes    \ell j +    b_k  1^\sharp \hat \otimes  \ell k,  $$
where recall that   $1 ^\sharp  \in E_1^*$  is dual to $1$ in $E_1$, and $a_i, a_j, a_k, a_l, b_i, b_i, b_k , b_l \in  \R$. 
From \cite[Proposition 2.1, p. 196]{bryant} and other assertions  therein one obtains that the stabilizer $\Stab_{\Spin(7)} ( i\wedge j)$  acts   transitively on the     product of unit spheres $ S^1  (E_{i \wedge j}) \times    S^5 (E_{i \wedge  j}^\perp)$ so we  can assume  without loss of generality that  $a_j = a_l = b_i = b_j = b_k =0$. Picking $w=\ell$ in \eqref{eq:eq-for-U} we obtain
$$ \Pi_{\so  (E_{j\wedge \ell}^\perp)}R_2(i\wedge j)=
\Pi_{\so  (E_{j\wedge \ell}^{\perp})} (a_i 1^\sharp \hat \otimes i  + a_k   1^\sharp \hat \otimes k  )  = a_i 1^\sharp \hat \otimes i  + a_k   1^\sharp \hat \otimes k .$$
Using the      explicit  form of the Cayley    4-form $\varphi_\chi$ as given in \cite[(4.1)]{KLS2018}
\begin{align*}\varphi_\chi= & e ^{0123}+ e ^{0145} + e^{0167} + e^{0246} - e^{0257}- e^{0347} - e^{0356}\\
& +  e^{4567} + e^{2367} + e^{2345} + e^{
	1357} - e^{1346} - e^{1256} - e^{1247},
 \end{align*}
where $e^{abcd}$ is a shorthand notation for $e^a\wedge e^b\wedge e^c\wedge e^d$, and $(e^i)$ is the dual basis of the standard orthonormal basis of $\O$, one computes
 $$[J_{E_{j\wedge \ell}^\perp}, (a_i 1 ^\sharp \hat \otimes i  + a_k   1^\sharp \hat \otimes k )] =a_i (\ell j)^\sharp \hat \otimes i  + a_k( \ell j )^\sharp\hat \otimes  k - a_i 1^\sharp \hat \otimes  \ell  k -  a_k  1^\sharp \hat \otimes \ell i.$$
This vanishes if and only if  $ a_i = a_k=0$.  
\end{proof}

Theorem  \ref{thm:main}(3) in the  (3,8) case now follows immediately by noticing that the Riemannian curvature tensor $R$ of a connected Riemannian manifold $(M,g)$ with $\dim M\geq 3$ satisfies $R(v,w)\in \so(E_{v\wedge w})$ for any two linearly independent tangent vectors if and only if $(M,g)$ has constant sectional curvature.\footnote{At least the  ``if"  assertion   seems  well known,  see  e.g. \cite[p. 31]{Gilkey2001} for an equivalent formulation, which we  also utilize  below.  The ``only if'' part is an easy consequence of Schur's lemma for the Ricci tensor. A detailed  proof can be found  in arXiv:2203.04233v2.}

\begin{remark}\label{rem;existence}  Given   Riemannian     manifolds  $(M^7, g)$, or $ (M^8, g)$ of constant
	curvature,  the existence  of   a    VCP   product  on $(M^7, g)$ and $(M^8, g)$    is equivalent to the existence  of a section  of the associated $SO(7)/G_2$-bundle  over  $M^7$   and   of the  associated   $SO(8)/\Spin(7)$-bundle over  $M^8$,   respectively.  A section of  the associated   $SO(7)/G_2$-bundle  over $M^7$ exists if and only if the manifold $M^7$ is orientable and spinnable,  i.e., equivalently,  if and only if  the first and the second     Stiefel-Whitney classes $w_1(M^7)$ and $w_2(M^7)$ of $M^7$  vanish, see \cite[Theorem 10.6, Chapter IV]{LM89} or \cite[Proposition 3.2]{FKMS1997}.  A section of  the associated   $SO(8)/\Spin(7)$-bundle  over  $M^8$ exists if and only if $w_1(M^8)=w_2(M^8)=0$ and for  any choice of orientation of $M^8$ one has $p_1(M^8)^2- 4p_2(M^8) \pm 8\chi(M^8)=0$, where $p_1$ and $p_2$ are the first two Pontryagin classes and $\chi$ is the Euler class\cite[Theorem 3.4, Corollary 3.5]{GG1970}. 
	A family  of  $\Sp (2)$-invariant $G_2$-structure   on homogeneous  $7$-sphere   $\Sp(2)/\Sp (1)$ of constant  curvature is given in   Remark  at the end  of   Section 2 in \cite{LMES21}.
	It follows from \cite{ACFR20}  that there  is no homogeneous $\Spin(7)$-structure  on   the
	sphere  $S^8$.
\end{remark}

\section{The second       CR-integrability   condition}\label{sec:second}
In  this section $ (M, g, \chi)$  is a   Riemannian  manifold with  a VCP structure  $\chi$
and   $({\mathbb{G}}, B, J_{ g, \chi})$ is its CR-twistor space.    In Subsection \ref{subs:FN} we    express  the second   integrability condition \eqref{CR2}  in terms  of  the  Fr\"olicher-Nijenhuis tensor and compute  this tensor in terms  of $(M, g, \chi)$  in later subsections. Using this  we  complete  the  proof of the Main Theorem \ref{thm:main}. For  this purpose, we   consider  the natural  metric  $\tilde{g}$ on   the total space $\wedge ^{r-1}  TM$    such  that

(i) for any $v \in \bigwedge^{r-1} TM$,  $T^\ver_v  \bigwedge ^{r-1} TM$ is orthogonal to $T^\hor _v \bigwedge^{r-1} TM$, 

 (ii) for any $ v \in \bigwedge^{r-1} TM$   the restriction   of $\tilde g$  to $T^{\ver}_v \wedge ^{r-1} T_{\pi (v)} M$ coincides with   the     metric on $\bigwedge^ {r-1} T_{\pi (v)} M$ defined  by $g(\pi(v))$,
 
 (iii)  The projection $\pi: (\bigwedge^{r-1}TM, \tilde g) \to (M, g)$  is  a Riemannian  submersion.
 
 If $r =2$  then  $\tilde g$  is the Sasaki  metric  on $TM$ \cite{Sasaki58}.
Abusing notation, we also denote  by $\tilde g$ the restriction of $\tilde  g$ to ${\mathbb{G}}$.
Let   us extend  the  operator $J_{g,\chi}\colon B\to B$  to an operator $\tilde J_B\colon T{\mathbb{G}}\to T{\mathbb{G}}$ on the  whole  space $T{\mathbb{G}}$ by setting
$$ (\tilde J_B)\vert_B = J_{g,\chi},  \qquad  (\tilde   J_B)\vert_{B^\perp} =0,$$
where $B^\perp$ is the orthogonal   complement     to $B$ in $T{\mathbb{G}}$. 

\subsection{The second CR-integrability condition and the  Fr\"olicher-Nijenhuis tensor}\label{subs:FN}

\begin{proposition}\label{prop:CR2}  The   second     CR-integrability  condition   is equivalent to the  following  condition
\begin{equation}\label{eq:FN}
\Pi_B ( [\tilde J_B, \tilde J_B]^{FN}_{ | B} ) = 0.
\end{equation}
\end{proposition}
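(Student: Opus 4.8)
The plan is to unwind the Fr\"olicher--Nijenhuis bracket $[\tilde J_B,\tilde J_B]^{FN}$ explicitly and to show that, after projecting to $B$ and restricting to pairs of sections of $B$, it reproduces exactly the left-hand side of the second CR-integrability condition \eqref{CR2}. The starting point is the classical identity for the self-bracket of a vector-valued $1$-form $K\in\Om^1({\mathbb{G}},T{\mathbb{G}})$, namely $[K,K]^{FN}=2N_K$, where the Nijenhuis tensor
\[
N_K(X,Y)=[KX,KY]-K[KX,Y]-K[X,KY]+K^2[X,Y]
\]
is $C^\infty({\mathbb{G}})$-bilinear and skew, hence a genuine tensor. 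Because it is tensorial, I may evaluate it on sections $X,Y\in\Gamma(B)$ (extended arbitrarily to vector fields on ${\mathbb{G}}$) and the outcome will depend only on the pointwise values $X_v,Y_v\in B_v$; this is precisely what the notation $[\tilde J_B,\tilde J_B]^{FN}_{|B}$ refers to.

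The key computational step uses two elementary properties of $\tilde J_B$. First, since $\tilde J_B$ agrees with $J_{g,\chi}$ on $B$ and vanishes on $B^\perp$, for every vector field $Z$ on ${\mathbb{G}}$ one has $\tilde J_B Z=J_{g,\chi}\,\Pi_B Z$; second, as $J_{g,\chi}$ maps $B$ to $B$ with $J_{g,\chi}^2=-\Id$, it follows that $\tilde J_B^2=-\Pi_B$ on all of $T{\mathbb{G}}$. Substituting $K=\tilde J_B$ into $N_K$ and using $\tilde J_B X=J_{g,\chi}X$, $\tilde J_B Y=J_{g,\chi}Y$ for $X,Y\in\Gamma(B)$ then gives
\[
N_{\tilde J_B}(X,Y)=[J_{g,\chi}X,J_{g,\chi}Y]-J_{g,\chi}\Pi_B[J_{g,\chi}X,Y]-J_{g,\chi}\Pi_B[X,J_{g,\chi}Y]-\Pi_B[X,Y].
\]
Applying $\Pi_B$ and using that each term $J_{g,\chi}\Pi_B[\,\cdot\,]$ already lies in $B$ (so $\Pi_B$ fixes it), I would obtain
\[
\tfrac12\,\Pi_B[\tilde J_B,\tilde J_B]^{FN}(X,Y)=\Pi_B\bigl([J_{g,\chi}X,J_{g,\chi}Y]-[X,Y]\bigr)-J_{g,\chi}\Pi_B\bigl([J_{g,\chi}X,Y]+[X,J_{g,\chi}Y]\bigr),
\]
whose right-hand side is precisely the expression appearing in \eqref{CR2}. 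Hence \eqref{eq:FN} holds if and only if this vanishes for all $X,Y\in\Gamma(B)$, which is exactly the second CR-integrability condition.

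The computation itself is short; the one point that requires care is that $\tilde J_B$ is \emph{not} an almost complex structure on $T{\mathbb{G}}$ --- it is degenerate, with $\tilde J_B^2=-\Pi_B$ rather than $-\Id$ --- so the usual Nijenhuis manipulations must be carried out keeping every projection $\Pi_B$ in place. It is exactly this degeneracy that converts the ``naive'' Nijenhuis vanishing into the projected condition \eqref{CR2} featuring $\Pi_B$ in both clusters of terms. I would record, as a harmless auxiliary remark (via Lemma \ref{lem:bracket-in-B-plus-Tvert}), that $[X,Y]$ already lies in $\Gamma(B\oplus^\perp T^{\mathrm{vert}}{\mathbb{G}})$, which makes the bookkeeping of the $\tilde J_B^2[X,Y]$ term transparent, though strictly the identity $\tilde J_B^2=-\Pi_B$ on all of $T{\mathbb{G}}$ already suffices. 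No essential obstacle arises; the main task is simply to track the tensoriality and the projections correctly.
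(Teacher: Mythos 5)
Your proof is correct and takes essentially the same route as the paper: both arguments rest on the classical identity $[\tilde J_B,\tilde J_B]^{FN}=2N_{\tilde J_B}$ for the Fr\"olicher--Nijenhuis self-bracket of a vector-valued $1$-form (the paper cites \cite[Corollary 8.12, p.~73]{KMS1993}) and then reduce to condition \eqref{CR2} by commuting $\Pi_B$ past $\tilde J_B$, which is exactly your observation that $\tilde J_B=J_{g,\chi}\,\Pi_B$, $\tilde J_B^2=-\Pi_B$, and $J_{g,\chi}\Pi_B(\cdot)$ already lies in $B$. Your explicit bookkeeping of tensoriality and of the degeneracy of $\tilde J_B$ merely spells out what the paper leaves implicit.
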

\begin{proof}   By \cite[Corollary 8.12, p. 73]{KMS1993}, for any  two vector fields $X, Y $ on $T{\mathbb{G}}$ we  have
	
\begin{equation}
\Pi _B ({1\over 2}[\tilde J_B, \tilde  J_B]^{FN} (X, Y))  =\Pi _B ( [\tilde J_B   X,  \tilde  J_B  Y] - [X, Y] - \tilde J_B ( [X, \tilde J_B Y]  + [ \tilde J_B X, Y])).
\label{eq:FNCR2}
\end{equation}
Taking into account $\Pi_B \circ   \tilde J _{B}  =  \tilde J_{B} \circ  \Pi_B$, this   proves  Proposition  \ref{prop:CR2}.
\end{proof}

Now we are  going to express   Condition \eqref{eq:FN} in terms  of   the Levi-Civita covariant  derivative  $\tilde \nabla$  on  the Riemannian manifold  $({\mathbb{G}}, \tilde{g})$.
 Let $\tilde \om$ be the   2-form  on ${\mathbb{G}}$ defined by $\tilde  \om (X, Y) = \tilde g (\tilde  J_B  X, Y)$.
Equivalently, $\tilde \om _v (X,  Y) = \varphi_\chi (v \wedge d\pi_v X \wedge  d\pi_vY)$. In particular, we have
\begin{equation}\label{eq:omega-via-phi}
\tilde \om _v (X,  Y)= (\pi^*\varphi_\chi) (Z,X ,Y)
\end{equation}
for any $Z$ with $d\pi_vZ=v$. Notice that, by construction, $\tilde{\omega}_v$ only depends on the horizontal parts of the tangent vectors $X$ and $Y$ in $T_v{\mathbb{G}}$.

 Denote by  $(e_i)$ an orthonormal   basis  of $T_v  \mathbb{G}$. By \cite[Proposition 2.2]{KLS2018}, we have
	$$ [\tilde J_B , \tilde J_B]^{FN}_{v}  =  2 \sum_{i,j}\big ( (\imath_{e_i}\tilde \om) \wedge  (\imath_{e_j}\tilde{\nabla}_{e_i}\tilde \om) +  \sum_{k}(\imath_{e_j}\imath_{e_i}\tilde \om) \wedge  e^k \wedge (\imath_{e_i}\tilde{\nabla} _{e_k}\tilde \om) \big) \otimes  e_j. $$
Let $m=\dim M$ and $N=\dim \mathbb{G}$. We can choose  $(e_i)$  in such a way  that $e_1, \cdots,  e_{m-r+1}$   is  a basis of $B(v)$. 
With such a choice one has that $\Pi_B( [\tilde J_B , \tilde J_B]^{FN}_{|B(v)} ) = 0$   if and only if for all $ j,p,q \in [1, m+r -1]$ one has
\begin{align*}
\imath_{e_p}\imath_{e_q} \left(\sum_{i\in [1,m-r+1]}(\imath_{e_i}\tilde \om) \wedge  (\imath_{e_j}\tilde{\nabla}_{e_i}\tilde \om) +\sum_{i,k\in [1,m-r+1]}  (\imath_{e_j}\imath_{e_i}\tilde \om) \wedge  e^k \wedge (\imath_{e_i}\tilde{\nabla} _{e_k}\tilde \om)\right)= 0.
\end{align*}
We can choose $(e_1, \cdots,  e_{m-r+1})$ to be
a unitary  frame with respect   to    the  pair $( \tilde{g}\vert_{B(v)}, J_{g,\chi})$, i.e.,  in such a way that $e_{\frac{m+r -1}{2}+k}=J_{g,\chi}e_k$, for $k\in[1, \frac{m+r -1}{2}]$.  The    vectors   $(e_1, \cdots,  e_{\frac{m+r-1}{2}})$  will be called a {\it Hermitian  basis}. 
 With this choice, for $a,b\in [1, m+r -1]$ with $a<b$ one has 
\[
\tilde{\omega}(e_a,e_b)=\begin{cases}
1& \text{ if } e_b=J_{g,\chi}  e_a\\
0 &\text{ elsewhere}
\end{cases}
\]

The second  CR-intergrabiltiy condition is therefore equivalent to the  system 
\begin{align}
\imath_{e_p}\imath_{e_q}\Big ( \imath_{J_{g,\chi}e_p} \tilde \om \wedge  (\imath_{e _j}\tilde{\nabla}_{J_{g,\chi}e_p}\tilde \om) +\imath_{J_{g,\chi}e_q}\tilde \om \wedge (\imath_{e_j}\tilde{\nabla}_{J_{g,\chi}e_q}\tilde \om)\nonumber\\
  +  \sum_{  k \in [1,m-r +1]}  e^k  \wedge (\imath_{J_{g,\chi}e_j}\tilde{\nabla}_{e_k} \tilde \om )\Big ) = 0  \label{eq:jlr}
\end{align}
for  any $j,p,q \in [1, m-r+1]$.
The term  involving $e^k$ in the  last  sum in LHS of \eqref{eq:jlr} vanishes unless $k \in \{  p, q\}$.  So we can  rewrite \eqref{eq:jlr}  as follows
\begin{align}
-  \imath_{e_q}(\imath_{e_j}\tilde{\nabla}_{J_{g,\chi}e_p}\tilde \om) +\imath_{e_p}(\imath_{e_j}\tilde{\nabla}_{J_{g,\chi}e_q}\tilde \om)
+ \imath_{e_q} (\imath _{J_{g,\chi} e_j} \tilde{\nabla} _{e_p}\tilde \om)  - \imath_{e_p} \imath_{J_{g,\chi}e_j} \tilde{\nabla} _{e_q}\tilde \om) =0\nonumber\\
 \text{ for any }  j,p,q \in [1, m -r +1] .\label{eq:pqrgen}
\end{align}

We can now complete the proof of Theorem \ref{thm:main}. 
\begin{itemize}
\item Theorem \ref{thm:main}(4)   follows  from Lemma \ref{lem:bracket-in-B-plus-Tvert}, Proposition \ref{prop:CR2}  and \eqref{eq:thor}.
\item Theorem \ref{thm:main}(5--6) is classical, see Remark \ref{rem:parts-5-6}.
\item A proof of Theorem \ref{thm:main}(7) is the content of the following two Sections.
\end{itemize}
 
\subsection{The  second    CR-integrability   condition for a  $7$-manifold $(M,g)$ with   a   VCP structure}
Since the complex structure $J_{g,\chi}$ on $B(v)$ is given by the vector cross product with the unit vector $v$, and the 2-form $\tilde{\omega}$ is defined in terms of the 3-form $\varphi_\chi$, one should expect that the expressions appearing in \eqref{eq:pqrgen} can be written in terms of $\varphi_\chi$ and of $v\times-$. This is precisely the content of the following Lemma.
\begin{lemma}\label{lemma:to-prove-pqrgen}
Let $x$ be a point in $M$ and let $(v,w_1,w_2,w_3)$ be a orthonormal quadruple in $T_xM$ such that $(w_1,w_2,w_3)$ is a Hermitian basis of $(E_v^\perp, J_{E_v^\perp})$. 
Let $w_{3+k}=v\times  w_k$ for $k\in[1,3]$.
Let  $(e_1,\dots,e_6)$ be an orthonormal basis in $B(v)$ with $w_a=d\pi_v(e_{a})$ for $a\in[1,6]$.
Then the following identities hold  for  $j, p, q \in [1, 6]$
\begin{align}
\label{eq:from-e-to-w-in} \imath_{e_q}(\imath_{e_j}\tilde{\nabla}_{J_{g,\chi}e_p}\tilde \om)&=(\nabla_{   v\times  w_p} \varphi_\chi) (v, w_j, w_q)\\
\label{eq:from-e-to-w-iin} \imath_{e_q} (\imath _{J_{g,\chi} e_j} \tilde{\nabla} _{e_p}\tilde \om)&=(\nabla_{ w_p} \varphi_\chi) ( v,  v \times  w_j, w_q)
\end{align}

\end{lemma}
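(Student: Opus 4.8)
The plan is to exhibit $\tilde\om$ as a single interior product and then differentiate by the Leibniz rule, isolating one term that turns out to be vertical and one that descends to $\nabla\varphi_\chi$ on the base. Since $r=2$, the space $\mathbb{G}$ is the unit tangent sphere bundle and each point $v$ is a unit tangent vector of $M$. First I would introduce the \emph{tautological horizontal vector field} $\Vv$ on $\mathbb{G}$, defined by requiring $\Vv_v\in T_v^{\hor}\mathbb{G}$ to be the horizontal lift of $v$, so that $d\pi_v(\Vv_v)=v$. By \eqref{eq:omega-via-phi}, and because $\pi^*\varphi_\chi$ sees only the horizontal parts of its arguments, one has $\tilde\om=\imath_{\Vv}\,\pi^*\varphi_\chi$. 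Both left-hand sides of \eqref{eq:from-e-to-w-in} and \eqref{eq:from-e-to-w-iin} then have the form $(\tilde\nabla_W\tilde\om)(A,B)$ with $W,A,B\in T_v^{\hor}\mathbb{G}$.

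Since $\tilde\nabla$ commutes with contractions, for any $W$ one obtains
\[
\tilde\nabla_W\tilde\om=\imath_{\tilde\nabla_W\Vv}\,\pi^*\varphi_\chi+\imath_{\Vv}\,\tilde\nabla_W\pi^*\varphi_\chi,
\]
so that, evaluated on horizontal $A,B$,
\[
(\tilde\nabla_W\tilde\om)(A,B)=(\pi^*\varphi_\chi)(\tilde\nabla_W\Vv,A,B)+(\tilde\nabla_W\pi^*\varphi_\chi)(\Vv,A,B).
\]
The next step is to show that for horizontal $W$ the first summand vanishes. I would realize $\bar W:=d\pi_v W$ by a curve $c$ in $M$ and set $\gamma(t)=P_{0,t}v$, the parallel transport of $v$ along $c$; this is a horizontal curve in $\mathbb{G}$ with $\dot\gamma(0)=W$, and $\Vv_{\gamma(t)}$ is the horizontal lift of the parallel field $t\mapsto P_{0,t}v$. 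Since $\pi$ is a Riemannian submersion, $d\pi(\tilde\nabla_W\Vv)=\tfrac{D}{dt}\big(P_{0,t}v\big)\big\vert_{0}=0$, hence $\tilde\nabla_W\Vv\in T^{\ver}\mathbb{G}$; as $\pi^*\varphi_\chi$ annihilates vertical vectors, the first summand is zero. Passing from the total space of $TM$ to $\mathbb{G}=S(TM)$ only modifies $\tilde\nabla_W\Vv$ by a radial vertical second-fundamental-form term, which is irrelevant here.

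For the second summand I would extend the values at $v$ of $\Vv$, $A$ and $B$ to horizontal lifts of vector fields on $M$; by O'Neill's formula the vertical corrections produced by $\tilde\nabla_W$ are again killed by $\pi^*\varphi_\chi$, which yields
\[
(\tilde\nabla_W\pi^*\varphi_\chi)(\Vv,A,B)=(\nabla_{\bar W}\varphi_\chi)\big(v,\,d\pi_v A,\,d\pi_v B\big).
\]
It then remains to specialize. For \eqref{eq:from-e-to-w-in} I take $W=J_{g,\chi}e_p$, so $\bar W=v\times w_p$, together with $A=e_j$ and $B=e_q$, giving $(\nabla_{v\times w_p}\varphi_\chi)(v,w_j,w_q)$. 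For \eqref{eq:from-e-to-w-iin} I take $W=e_p$, so $\bar W=w_p$, together with $A=J_{g,\chi}e_j$ (whence $d\pi_v A=v\times w_j$) and $B=e_q$, giving $(\nabla_{w_p}\varphi_\chi)(v,v\times w_j,w_q)$.

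The main obstacle is concentrated in the two submersion-geometry facts above: the verticality of $\tilde\nabla_W\Vv$ for horizontal $W$, and the descent of $\tilde\nabla_W\pi^*\varphi_\chi$ to $\nabla_{\bar W}\varphi_\chi$ on horizontal arguments. Neither is deep, but both require care in bookkeeping the horizontal/vertical splitting and in checking that the difference between the Levi-Civita connection of $\tilde g$ on $S(TM)$ and that on the total space $TM$ (a purely radial vertical correction) never interferes, since every quantity that survives is evaluated against $\pi^*\varphi_\chi$, which vanishes on vertical vectors.
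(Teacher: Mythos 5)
Your proposal is correct, and it reorganizes the argument in a way that differs from the paper's, although the two proofs run on the same fuel. The paper proves exactly the two submersion facts you isolate, but as statements about adapted local frames: Lemma \ref{lemma:gs-prop-7-2} (horizontal lifts $\hat e_i$ of normal-coordinate fields have $(\tilde\nabla_{\hat e_i}\hat e_j)^{\mathrm{hor}}\vert_v=0$) and Lemma \ref{lemma:gs-prop-7-2-bis} (the same for the lift $\mathcal H$ of the parallel-transport field $Z$), both deduced from the explicit Sasaki-metric formula $\tilde\nabla_{\hat e_i}\hat e_j\vert_v=-\tfrac12(R_g(W_i,W_j)v)^{\mathrm{v.l.}}\vert_v$ of \cite[Proposition 7.2 (i)]{GK02}; it then computes the scalar derivative $\tilde\nabla_{\hat f_p}(\tilde\om(\hat e_j,\hat e_q))\vert_v$ twice, once by the product rule (where the vertical covariant derivatives of the frame drop out because $\tilde\om$ sees only horizontal components) and once along the horizontal lift of a geodesic via \eqref{eq:omega-via-phi}, where it becomes $\nabla_{u_p}(\varphi_\chi(Z,W_j,W_q))$. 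You instead encode everything in the single global identity $\tilde\om=\imath_{\mathcal V}\pi^*\varphi_\chi$, with $\mathcal V$ the tautological horizontal (geodesic-spray) field, and differentiate once by Leibniz; your verticality of $\tilde\nabla_W\mathcal V$ is precisely the content of Lemma \ref{lemma:gs-prop-7-2-bis} (indeed your justification — that $\mathcal V$ agrees along the parallel-transport curve with the lift of $Z$ — is the paper's construction of $\mathcal H$), and your descent statement for $\tilde\nabla_W\pi^*\varphi_\chi$ on horizontal slots is O'Neill's formula playing the role of the citation to \cite{GK02}. What your route buys is a frame-free master identity $(\tilde\nabla_W\tilde\om)(A,B)=(\nabla_{d\pi_vW}\varphi_\chi)(v,d\pi_vA,d\pi_vB)$ valid for all horizontal $W,A,B$, from which \eqref{eq:from-e-to-w-in} and \eqref{eq:from-e-to-w-iin} follow uniformly with no case-splitting between $p\in[1,3]$ and $p\in[4,6]$ (the paper's $u_p$, $f_p$ bookkeeping); what the paper's route buys is that every step is an elementary derivative of a scalar function along a curve, with the one nontrivial geometric input quarantined in the reference. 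Your caution about $S(TM)$ versus the total space of $TM$ is also handled correctly — the second-fundamental-form correction is radial vertical, hence annihilated by $\pi^*\varphi_\chi$ — and in fact it can be bypassed entirely: $\pi\colon(\mathbb{G},\tilde g)\to(M,g)$ is itself a Riemannian submersion with the same horizontal distribution, so O'Neill applies directly on $\mathbb{G}$.
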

In order to prove Lemma \ref{lemma:to-prove-pqrgen} we will need some preparation. Let $v\in \mathbb{G}=\mathbb{Gr}^+_1(1,M)\subset TM$. For a tangent vector $X$ in $T_{\pi(v)}M$, we denote by $X^{\mathrm{h.l.}}\vert_v$ and by $X^{\mathrm{v.l.}}\vert_v$ the horizontal and vertical lifts of $X$ to horizontal and tangent vectors in $T_vTM$. Since $T_v^{\mathrm{hor}}TM=T_v^{\mathrm{hor}}\mathbb{G}$, for any horizontal tangent vector $Y\in T_v^{\mathrm{hor}}\mathbb{G}$ we have $Y=(d\pi_v(Y))^{\mathrm{h.l.}}\vert_v$.
\begin{lemma}\label{lemma:gs-prop-7-2}
Let $v\in \mathbb{G}$ and let $(e_i)$ be a orthonormal basis of $T^{\mathrm{hor}}_v\mathbb{G}$ with $d\pi_v(e_7)=v$. Let $w_1,\dots,w_6,v\in T_{\pi(v)}M$ be images of $e_1,\dots,e_7$ via $d\pi_v$, and let $(W_i)$  be the vector fields on a neighborhood $U$ of $\pi(v)$ in $M$ corresponding to the normal coordinates defined by the exponential map $\exp_{\pi(v)}\colon T_{\pi(v)}M\to M$ such that $W_i\vert_{\pi(v)}=w_i$ for $i=1,\dots,6$ and $W_7\vert_{\pi(v)}=v$. Finally let $(\hat{e}_i)$ be the vector fields  in the neighborhood $\pi^{-1}(U)$ of $v$ in $\mathbb{G}$ defined by horizontal lifting of $(W_i)$, i.e., $\hat{e}_i\vert_y=W_{i; \pi(y)}^{\mathrm{h.l}}\vert_y$ for any $y\in \pi^{-1}(U)$.
Then $\hat{e}_i\vert_v=e_i$ and
\[
(\tilde{\nabla}_{\hat{e}_i}\hat{e}_j)^{\mathrm{hor}}\bigr\vert_v=0
\]
for any $i,j\in [1,6]$.
\end{lemma}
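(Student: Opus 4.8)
The plan is to derive both statements from the structure of $\tilde g$ as the metric of a Riemannian submersion together with the defining property of normal coordinates, so that essentially no computation is required. First I would settle the identity $\hat e_i\vert_v=e_i$. By construction $\hat e_i\vert_v=W_{i;\pi(v)}^{\mathrm{h.l.}}\vert_v=w_i^{\mathrm{h.l.}}\vert_v$, and since $e_i\in T_v^{\mathrm{hor}}\mathbb{G}$ is by hypothesis the unique horizontal vector with $d\pi_v(e_i)=w_i$, while $w_i^{\mathrm{h.l.}}\vert_v$ is by definition the unique horizontal vector projecting to $w_i$, the two coincide. This uses only that $d\pi_v$ restricts to an isomorphism $T_v^{\mathrm{hor}}\mathbb{G}\xrightarrow{\sim}T_{\pi(v)}M$.

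For the second assertion I would first record that, by property (iii) in the construction of $\tilde g$ together with the identification $T_v^{\mathrm{hor}}\mathbb{G}=T_v^{\mathrm{hor}}TM$, the projection $\pi\colon(\mathbb{G},\tilde g)\to(M,g)$ is a Riemannian submersion, and that each $\hat e_i$ is, by its very definition, the horizontal lift of the base vector field $W_i$, i.e.\ a \emph{basic} vector field. The key step is then O'Neill's formula for the Levi-Civita connection of a Riemannian submersion applied to basic fields (see e.g.\ \cite{Besse1986}): the horizontal component of $\tilde\nabla_{\hat e_i}\hat e_j$ is the horizontal lift of the base covariant derivative,
\[
(\tilde\nabla_{\hat e_i}\hat e_j)^{\mathrm{hor}}=(\nabla_{W_i}W_j)^{\mathrm{h.l.}},
\]
the remaining part being vertical (the O'Neill/curvature correction), and hence irrelevant to the horizontal projection. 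For $r=2$ one may equivalently invoke the explicit connection formulas of the Sasaki metric \cite{Sasaki58}, whose only horizontal contribution to $\tilde\nabla_{X^{\mathrm{h.l.}}}Y^{\mathrm{h.l.}}$ is $(\nabla_X Y)^{\mathrm{h.l.}}$.

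Finally I would evaluate at $v$ and invoke the defining feature of normal coordinates. Since the $W_i$ form the coordinate frame of the geodesic normal coordinates centered at $\pi(v)$, all Christoffel symbols vanish at that point, so $\nabla_{W_i}W_j\vert_{\pi(v)}=0$. Combining this with the displayed identity gives
\[
(\tilde\nabla_{\hat e_i}\hat e_j)^{\mathrm{hor}}\bigr\vert_v=\bigl(\nabla_{W_i}W_j\vert_{\pi(v)}\bigr)^{\mathrm{h.l.}}\bigr\vert_v=0
\]
for all $i,j\in[1,6]$, as claimed.

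I expect the only point requiring care — rather than a genuine obstacle — to be the justification that O'Neill's horizontal formula may be applied on $\mathbb{G}$ itself (not merely on $TM$) and that the curvature correction is purely vertical; both are handled by checking that the $\hat e_i$ are basic and that the horizontal distributions of $\mathbb{G}$ and $TM$ coincide, so that the horizontal part of $\tilde\nabla_{\hat e_i}\hat e_j$ is computed entirely within the submersion $\pi\colon(\mathbb{G},\tilde g)\to(M,g)$. No further difficulty arises, the conclusion reducing to the normal-coordinate identity above.
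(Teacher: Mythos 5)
Your proof is correct, but it takes a different route from the paper's. The paper quotes the explicit Gudmundsson--Kappos formula for the Levi-Civita connection of the Sasaki metric on $TM$ (\cite[Proposition 7.2 (i)]{GK02}): since $\nabla_{W_i}W_j\vert_{\pi(v)}=0$ in normal coordinates, that formula collapses to
\[
\tilde{\nabla}_{\hat{e}_i}\hat{e}_j\bigr\vert_v=-\tfrac{1}{2}\bigl(R_g(W_i,W_j)\vert_{\pi(v)}v\bigr)^{\mathrm{v.l.}}\bigr\vert_v,
\]
which is purely vertical, and the horizontal part vanishes. You instead invoke O'Neill's identity for basic fields of a Riemannian submersion, $(\tilde\nabla_{\hat e_i}\hat e_j)^{\mathrm{hor}}=(\nabla_{W_i}W_j)^{\mathrm{h.l.}}$, and then use the same normal-coordinate vanishing. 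The two arguments are cousins --- the vertical remainder in O'Neill's framework, $\tfrac{1}{2}[\hat e_i,\hat e_j]^{\mathrm{vert}}$, is exactly the curvature term the paper's formula exhibits --- but your version has two genuine advantages. First, it works directly with the submersion $\pi\colon(\mathbb{G},\tilde g)\to(M,g)$, whereas the paper's citation concerns the Sasaki metric on all of $TM$; strictly, one must then pass from the connection of $TM$ to that of the unit sphere bundle $\mathbb{G}$ via the Gauss formula, noting that the second fundamental form of $\mathbb{G}\subset TM$ is vertical-valued (the normal direction is the radial vertical one) so the horizontal parts agree --- a point the paper leaves tacit and your setup avoids entirely, provided you do verify (as you indicate) that $\tilde g\vert_{\mathbb{G}}$ makes $\pi$ a Riemannian submersion with horizontal distribution $T^{\mathrm{hor}}\mathbb{G}$ and that the $\hat e_i$ are basic. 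Second, being independent of any explicit metric formula, your argument transfers verbatim to the $(3,8)$ case, where $\mathbb{G}=\mathbb{Gr}^+(2,M)\subset\bigwedge^2 TM$ and the paper merely asserts the analysis ``goes along the same lines.'' What the paper's computation buys in exchange is the explicit vertical value of $\tilde{\nabla}_{\hat{e}_i}\hat{e}_j$ at $v$, which is stronger than the lemma requires but makes the curvature content visible.
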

\begin{proof}
The first statement is immediate by the definition of $\hat{e}_i$: we have
\[
\hat{e}_i\vert_v=W_{i; \pi(v)}^{\mathrm{h.l}}\vert_v=w_i^{\mathrm{h.l}}\vert_v=(d_{\pi_v}(e_i))^{\mathrm{h.l}}\vert_v=e_i.
\]
For the second statement, let $\nabla$ denote the Levi-Civita connection of $(M,g)$.
Since $\nabla_{W_i}W_j\bigr\vert_{\pi(v)}=0$, by
 \cite[Proposition 7.2 (i)]{GK02} we have
\[
\tilde{\nabla}_{\hat{e}_i}\hat{e}_j\bigr\vert_v=-\frac{1}{2}(R_g(W_i,W_j)\vert_{\pi(v)}v)^{\mathrm{v.l.}}\vert_v,
\]
where $R_g$ denotes the curvature tensor of $(M,g)$. The conclusion immediately follows.
\end{proof}

\begin{lemma}\label{lemma:gs-prop-7-2-bis}
In the same notation as in Lemma \ref{lemma:gs-prop-7-2}, let $Z$ be the vector field on the neighborhood $U$ of $\pi(v)$ in $M$ defined by $Z_x=\mathrm{tra}_{\pi(v),x}(v)$, where $\mathrm{tra}_{\pi(v),x}\colon T_\pi(v)M\to T_xM$ is the parallel transport along the unique geodesics in $U$ from $\pi(v)$ to $x$, and let $\mathcal{H}$ be the horizontal lift of $Z$ to the  neighborhood $\pi^{-1}(U)$ of $v$ in $\mathbb{G}$, i.e.,  $\mathcal{H}_y=Z_{\pi(y)}^{\mathrm{h.l.}}\vert_y$ for any $y\in \mathbb{G}$ with $\pi(y)\in U$. Then $d\pi_v(\mathcal{H}_v)=v$ and
\[
(\tilde{\nabla}_{\hat{e}_i}\mathcal{H})^{\mathrm{hor}}\bigr\vert_v=0
\]
for any $i,j\in [1,6]$.
\end{lemma}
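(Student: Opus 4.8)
The plan is to treat the two assertions separately, the second being a direct analogue of Lemma \ref{lemma:gs-prop-7-2}. For the first assertion I would argue directly: the parallel transport $\mathrm{tra}_{\pi(v),\pi(v)}$ along the constant geodesic at $\pi(v)$ is the identity, so $Z_{\pi(v)}=v$, and hence $\mathcal{H}_v=Z_{\pi(v)}^{\mathrm{h.l.}}\vert_v=v^{\mathrm{h.l.}}\vert_v$. Since a horizontal lift projects onto its base vector under $d\pi_v$, this gives $d\pi_v(\mathcal{H}_v)=v$ at once.

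For the second assertion the key observation is that $\mathcal{H}$ is the horizontal lift of the base vector field $Z$, exactly as $\hat{e}_j$ is the horizontal lift of $W_j$ in Lemma \ref{lemma:gs-prop-7-2}. The only property of $W_j$ actually used in the proof of that lemma was the vanishing $\nabla_{W_i}W_j\vert_{\pi(v)}=0$, which killed the horizontal part of the expression furnished by \cite[Proposition 7.2 (i)]{GK02} and left only the vertical curvature term. I would therefore reduce the present statement to verifying the corresponding identity
\[
\nabla_{W_i}Z\vert_{\pi(v)}=0, \qquad i\in[1,6],
\]
after which the same application of \cite[Proposition 7.2 (i)]{GK02} yields
\[
\tilde{\nabla}_{\hat{e}_i}\mathcal{H}\vert_v=-\tfrac12\bigl(R_g(W_i,Z)\vert_{\pi(v)}v\bigr)^{\mathrm{v.l.}}\vert_v,
\]
which is purely vertical and so has vanishing horizontal part, for every $i\in[1,6]$ (the index $j$ in the statement playing no role, since $\mathcal{H}$ does not depend on it).

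To establish $\nabla_{W_i}Z\vert_{\pi(v)}=0$ I would use that, by construction, $Z$ is parallel along every radial geodesic issuing from $\pi(v)$. At the center $\pi(v)$ itself \emph{every} tangent direction is radial, and this is exactly what forces the derivative to vanish in all directions there, whereas at a point $x\neq\pi(v)$ only the single radial direction would be available. Concretely, for each $i$ let $\gamma_i(t)=\exp_{\pi(v)}(t\,w_i)$, so that $\dot{\gamma}_i(0)=w_i=W_i\vert_{\pi(v)}$; along $\gamma_i$ the field $Z$ is the parallel transport of $v$, hence its covariant derivative along $\gamma_i$ vanishes identically, and evaluating at $t=0$ gives $\nabla_{w_i}Z\vert_{\pi(v)}=0$. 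Since the covariant derivative at a point depends only on the differentiating vector there and on the restriction of $Z$ to any one curve realizing that direction, we may take this curve to be $\gamma_i$ and conclude $\nabla_{W_i}Z\vert_{\pi(v)}=0$. This radial-parallelism step is the only substantive point, and the mild care it requires — choosing the realizing curve to be the radial geodesic — is the main (and essentially only) obstacle; once it is in hand the argument is formally identical to that of Lemma \ref{lemma:gs-prop-7-2}, with $W_j$ replaced by $Z$.
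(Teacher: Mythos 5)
Your proposal is correct and follows essentially the same route as the paper: both reduce the claim to $\nabla_{W_i}Z\bigr\vert_{\pi(v)}=0$ and then invoke \cite[Proposition 7.2 (i)]{GK02} exactly as in Lemma \ref{lemma:gs-prop-7-2} to see that $\tilde{\nabla}_{\hat{e}_i}\mathcal{H}\bigr\vert_v$ is purely vertical. The only difference is that you spell out the radial-parallelism argument (every direction at the center $\pi(v)$ is realized by a radial geodesic along which $Z$ is parallel, and the covariant derivative depends only on the restriction of $Z$ to such a curve), which the paper asserts in a single sentence; your version is a legitimate, slightly more careful elaboration of the same step.
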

\begin{proof}
The first statement is immediate from $\mathcal{H}_v=Z_{\pi(v)}^{\mathrm{h.l.}}\vert_v=v^{\mathrm{h.l.}}\vert_v$. For the second statement, 
since $Z$ is defined by parallel transport along geodesics stemming from $\pi(v)$ and the vector fields $W_i$ correspond to normal coordinates at $\pi(v)$, we have $\nabla_{W_i}Z\bigr\vert_{\pi(v)}=0$, where $\nabla$ denotes the Levi-Civita connection on $M$. The conclusion then follows from \cite[Proposition 7.2 (i)]{GK02}, by the same reasoning as in the proof of  Lemma  \ref{lemma:gs-prop-7-2}.
\end{proof}

\begin{proof}[Proof of Lemma \ref{lemma:to-prove-pqrgen}]
By assumption, the triple $(e_1,e_2,e_3)$ is a Hermitian basis of $(B(v),  J_{g,\chi})$, so that setting $e_4=J_{g,\chi}e_1$, $e_5=J_{g,\chi}e_2$ and $e_6=J_{g,\chi}e_3$  we obtain a orthonormal basis for  $B(v)$. We complete it to a orthonormal basis for $T^{\mathrm{hor}}_v\mathbb{G}$ by adding a horizontal vector $e_7$ with $d\pi_v(e_7)=v$. 
Let us also write 
\[
u_p=\begin{cases}
w_{p+3}&\text{ if }p\in [1,3]\\
-w_{p-3}&\text{ if }p\in [4,6]
\end{cases};\quad
f_p=\begin{cases}
e_{p+3}&\text{ if }p\in [1,3]\\
-e_{p-3}&\text{ if }p\in [4,6]
\end{cases}
\]
and, accordingly, $\hat{f}_p=\hat{e}_{p+3}$ if $p\in [1,3]$ and $\hat{f}_p=-\hat{e}_{p-3}$ if $p\in [4,6]$.
We are in the assumptions of Lemma \ref{lemma:gs-prop-7-2} and \ref{lemma:gs-prop-7-2-bis} and so, in the same notation as there, we have
$
(\tilde{\nabla}_{\hat{e}_i}\hat{e}_j)^{\mathrm{hor}}\bigr\vert_v= (\tilde{\nabla}_{\hat{e}_i}\mathcal{H})^{\mathrm{hor}}\bigr\vert_v=0.
$
Therefore, recalling that $\tilde{\omega}$ only depends on the horizontal components of its arguments,
\begin{align*}
(\tilde{\nabla}_{\hat{f}_p}(\tilde{\omega}(\hat{e}_j,\hat{e}_q)))\bigr\vert_v&=((\tilde{\nabla}_{\hat{f}_p}\tilde{\omega})(\hat{e}_j,\hat{e}_q))\bigr\vert_v
=(\imath_{\hat{e}_q}\imath_{\hat{e}_j}\tilde{\nabla}_{\hat{f}_p}\tilde{\omega})\bigr\vert_v=\imath_{e_q}\imath_{e_j}\left((\tilde{\nabla}_{\hat{f}_p}\tilde{\omega})\bigr\vert_v\right)\\
&=\imath_{e_q}\imath_{e_j}(\tilde{\nabla}_{f_p}\tilde{\omega}).
\end{align*}
On the other hand,
\[
(\tilde{\nabla}_{\hat{f}_p}(\tilde{\omega}(\hat{e}_j,\hat{e}_q)))\bigr\vert_v=\tilde{\nabla}_{f_p}(\tilde{\omega}(\hat{e}_j,\hat{e}_q))=\frac{d}{dt}\biggr\vert_{t=0}
\tilde{\omega}_{\tilde{\gamma}(t)}(\hat{e}_j\vert_{\tilde{\gamma}(t)},\hat{e}_q\vert_{\tilde{\gamma}(t)}),
\]
where $\tilde{\gamma}$ is any path in $Gr^+1(1,M)$ with $\tilde{\gamma}(0)=v$ and $\frac{d}{dt}\bigr\vert_{t=0}\tilde{\gamma}=f_p$. In particular, we can choose as $\tilde{\gamma}$ the horizontal lift of the geodesics $\gamma_{\pi(v);u_p}$  stemming from the point $\pi(v)$ of $M$ with tangent vector $u_p=d\pi_{v}f_p$. By definition of parallel transport, this lift is $Z_{\gamma_{\pi(v);u_p}}$ so that
\[
(\tilde{\nabla}_{\hat{f}_p}(\tilde{\omega}(\hat{e}_j,\hat{e}_q)))\bigr\vert_v=\frac{d}{dt}\biggr\vert_{t=0}
\tilde{\omega}_{Z_{\gamma_{\pi(v);u_p}(t)}}(\hat{e}_j\vert_{Z_{\gamma_{\pi(v);u_p}(t)}},\hat{e}_q\vert_{Z_{\gamma_{\pi(v);u_p}(t)}}).
\]

By definition of $\mathcal{H}$, we have $d\pi_y\mathcal{H}_y=Z_{\pi(y)}$ for any $y\in \pi^{-1}(U)$, so that, in particular, $d\pi_{Z_{\gamma_{\pi(v);u_p}(t)}}\mathcal{H}_{Z_{\gamma_{\pi(v);u_p}(t)}}=Z_{\gamma_{\pi(v);u_p}(t)}$. By \eqref{eq:omega-via-phi} we therefore have
\begin{align*}
\tilde{\omega}_{Z_{\gamma_{\pi(v);u_p}(t)}}(\hat{e}_j\vert_{Z_{\gamma_{\pi(v);u_p}(t)}},\hat{e}_q\vert_{Z_{\gamma_{\pi(v);u_p}(t)}})&=
(\pi^*\varphi_\chi)(\mathcal{H},\hat{e}_j,\hat{e}_q)\bigr\vert_{Z_{\gamma_{\pi(v);u_p}(t)}}\\
&=\varphi_\chi(Z,W_j,W_q)\bigr\vert_{\gamma_{\pi(v);u_p}(t)},
\end{align*}

and so

\begin{align*}
(\tilde{\nabla}_{\hat{f}_p}(\tilde{\omega}(\hat{e}_j,\hat{e}_q)))\bigr\vert_v&=\frac{d}{dt}\biggr\vert_{t=0}\varphi_\chi(Z,W_j,W_q)\bigr\vert_{\gamma_{\pi(v);u_p}(t)}
=\nabla_{u_p}(\varphi_\chi(Z,W_j,W_q))\\
&=(\nabla_{u_p}\varphi_\chi)(v,w_j,w_q),
\end{align*}

where in the last identity we used the fact that, by construction, $\nabla_{u_p}Z=\nabla_{u_p}W_j=\nabla_{u_p}W_q=0$.
Since $f_p=J_{g,\chi}e_p$ we have
\[
u_p=d\pi_vf_p=d\pi_v(J_{g,\chi}e_p)=J_{E_v^\perp}(d\pi_v e_p)=J_{E_v^\perp}(w_p)=v\times w_p,
\]

so
we have finally found
\[
\imath_{e_q}\imath_{e_j}(\tilde{\nabla}_{J_{g,\chi}e_p}\tilde{\omega})=(\nabla_{v\times w_p}\varphi_\chi)(v,w_j,w_q).
\]

This proves \eqref{eq:from-e-to-w-in}. The proof of \eqref{eq:from-e-to-w-iin} is analogue.
\end{proof}
 From \eqref{eq:pqrgen} and Lemma \ref{lemma:to-prove-pqrgen} we get the following.
\begin{lemma}\label{lemma:cr-2-for-g2}
The second   CR-integrability  holds  for  $(\mathbb{Gr}^+ (1, M^7), B, J_{g, \chi})$ if and only if  for  any       $x \in M,    v  \in \mathbb{G}$ and some (and hence any) Hermitian basis  $w_1,w_2,w_3$ of $(E_v ^\perp,J_{E_v ^\perp})$ the following  conditions  hold
\begin{align}
		(\nabla_{v \times w_q}\varphi_\chi) ( v, w_j ,w_p)  - (\nabla_{v \times w_p}\varphi_\chi) ( v, w_j, w_q)\nonumber \\
		+  (\nabla_{ w_q} \varphi_\chi) ( v,  v \times  w_j, w_p)  - (\nabla_{ w_p}\varphi_\chi) ( v,  v \times w_j, w_q) =0
		\label{eq:with-wn}
		\end{align}
for 	any $j,p,q\in[1,6]$, where $w_{3+k}=v\times w_k$ for $k\in[1,3]$. 
\end{lemma}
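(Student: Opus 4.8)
The plan is to derive \eqref{eq:with-wn} by feeding the two computational identities of Lemma \ref{lemma:to-prove-pqrgen} into the frame-level reformulation \eqref{eq:pqrgen} of the second CR-integrability condition. By the discussion culminating in \eqref{eq:pqrgen} (which rests on Proposition \ref{prop:CR2} together with the unwinding of $\Pi_B[\tilde J_B,\tilde J_B]^{FN}_{|B}$), the second CR-integrability of $({\mathbb{G}}, B, J_{g,\chi})$ is equivalent to \eqref{eq:pqrgen} holding for all $j,p,q$ in the range $[1,m-r+1]$. Specializing to $(r,m)=(2,7)$ gives $m-r+1=6$, and $E_v^\perp$ carries the complex structure $J_{E_v^\perp}$ making it a $3$-dimensional complex (hence $6$-real-dimensional) space, so the indices run precisely over $[1,6]$ as in the statement.

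First I would fix $x\in M$ and $v\in{\mathbb{G}}$, choose a Hermitian basis $w_1,w_2,w_3$ of $(E_v^\perp, J_{E_v^\perp})$, set $w_{3+k}=v\times w_k$ for $k\in[1,3]$, and take the orthonormal frame $(e_1,\dots,e_6)$ of $B(v)$ with $d\pi_v(e_a)=w_a$ --- exactly the frame for which Lemma \ref{lemma:to-prove-pqrgen} is stated and for which \eqref{eq:pqrgen} was derived. The core step is then a termwise substitution. The first two summands of \eqref{eq:pqrgen}, namely $\imath_{e_q}\imath_{e_j}\tilde{\nabla}_{J_{g,\chi}e_p}\tilde\om$ and $\imath_{e_p}\imath_{e_j}\tilde{\nabla}_{J_{g,\chi}e_q}\tilde\om$, are rewritten by \eqref{eq:from-e-to-w-in} (the second one with the roles of $p$ and $q$ interchanged) as $(\nabla_{v\times w_p}\varphi_\chi)(v,w_j,w_q)$ and $(\nabla_{v\times w_q}\varphi_\chi)(v,w_j,w_p)$; the last two summands $\imath_{e_q}\imath_{J_{g,\chi}e_j}\tilde{\nabla}_{e_p}\tilde\om$ and $\imath_{e_p}\imath_{J_{g,\chi}e_j}\tilde{\nabla}_{e_q}\tilde\om$ are rewritten by \eqref{eq:from-e-to-w-iin} as $(\nabla_{w_p}\varphi_\chi)(v,v\times w_j,w_q)$ and $(\nabla_{w_q}\varphi_\chi)(v,v\times w_j,w_p)$. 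Collecting the four terms with their signs and rearranging into the form antisymmetric in $(p,q)$ turns \eqref{eq:pqrgen} into \eqref{eq:with-wn}, and conversely; this is the whole content of the equivalence.

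It remains to justify the phrase \emph{some (and hence any)} Hermitian basis. Since the second CR-integrability of $({\mathbb{G}}, B, J_{g,\chi})$ is an intrinsic property of the almost CR-structure, it is independent of the frame used to express it; thus once the equations \eqref{eq:with-wn} hold for the particular Hermitian basis chosen above they hold for every Hermitian basis, and conversely, validity of \eqref{eq:with-wn} for a single Hermitian basis already forces the frame-level condition \eqref{eq:pqrgen}. Equivalently, one may note that the family of equations \eqref{eq:with-wn} (ranging over all $j,p,q\in[1,6]$) is invariant under the $\U(3)$-action changing the Hermitian basis of $(E_v^\perp,J_{E_v^\perp})$, so its solvability does not depend on the chosen basis.

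I expect essentially no analytic obstacle here: all the geometric work --- computing $\tilde{\nabla}\tilde\om$ on the twistor space in terms of $\nabla\varphi_\chi$ via horizontal lifts and normal coordinates --- is already packaged into Lemma \ref{lemma:to-prove-pqrgen}, so the present statement reduces to bookkeeping. The one point demanding care is the index and sign matching: I must apply \eqref{eq:from-e-to-w-in} and \eqref{eq:from-e-to-w-iin} with the correct identification of the outer contraction index versus the covariant-derivative direction, and use the convention $w_{3+k}=v\times w_k$ (equivalently $v\times w_{3+k}=-w_k$) consistently whenever $p$ or $q$ lies in $[4,6]$, so that the four substituted terms assemble with exactly the signs displayed in \eqref{eq:with-wn}.
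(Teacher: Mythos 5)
Your proposal is correct and is essentially the paper's own proof: the paper likewise obtains this lemma by substituting the identities \eqref{eq:from-e-to-w-in} and \eqref{eq:from-e-to-w-iin} of Lemma \ref{lemma:to-prove-pqrgen} termwise into the frame-level condition \eqref{eq:pqrgen}, and your closing paragraph merely makes explicit the frame-independence (\emph{some, hence any} Hermitian basis) that the paper leaves implicit. One bookkeeping caveat you share with the printed text: substituting literally into \eqref{eq:pqrgen} \emph{as displayed} yields the opposite relative sign between the pair of $(\nabla_{v\times w_\bullet}\varphi_\chi)(v,w_j,\cdot)$ terms and the pair of $(\nabla_{w_\bullet}\varphi_\chi)(v,v\times w_j,\cdot)$ terms compared with \eqref{eq:with-wn}; this traces to a sign slip in the $\imath_{J_{g,\chi}e_j}$-summand of \eqref{eq:jlr} (with the inner-first contraction convention used in the proof of Lemma \ref{lemma:to-prove-pqrgen} one has $\sum_i(\imath_{e_j}\imath_{e_i}\tilde\om)\,e_i=-J_{g,\chi}e_j$, not $+J_{g,\chi}e_j$), so \eqref{eq:with-wn} is the correct final form and your derivation, like the paper's, is right modulo this sign typo in the intermediate display.
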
	
\begin{definition}
Let $V$ be a 7-dimensional Euclidean space endowed with a 2-fold vector cross product $\times$. The space of \emph{algebraic intrinsic torsions} for $(V,\times)$ is the subspace $\mathcal{T}(V)$ of $V^\ast\otimes \bigwedge^3 V^\ast$ consisting of those elements $A$ such that $A(\eta_1;\eta_2\wedge \eta_3\wedge(\eta_2\times \eta_3))=0$ for any $\eta_1,\eta_2,\eta_3\in V$. The $G_2$-invariant subspace $\mathcal{T}_{CR2}(V)$ of $\mathcal{T}(V)$ is defined as the subspace of $\mathcal{T}(V)$ consisting of those elements $A$ such that  for  any  $v  \in \mathbb{Gr}^+ (1, V)$ and some (and hence any) Hermitian basis  $w_1,w_2,w_3$ of $(E_v ^\perp,J_{E_v ^\perp})$ the condition
\begin{align}
		A(v \times w_q; v\wedge w_j \wedge w_p)  - A(v \times w_p;v \wedge w_j \wedge w_q)\nonumber \\
		+  A( w_q; v\wedge (v \times  w_j)\wedge w_p)  - A(w_p; v\wedge  (v \times w_j)\wedge w_q) =0
		\label{eq:with-wn-V}
		\end{align}
holds for 	any $j,p,q\in[1,6]$, where $w_{3+k}=v\times w_k$ for $k\in[1,3]$. 
\end{definition}
\begin{lemma}\label{lemma:dimensions-T}
We have $\dim(\mathcal{T}(V))=49$ and $\dim(\mathcal{T}_{CR2}(V))=0$.
\end{lemma}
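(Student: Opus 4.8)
The plan is to treat both equalities as computations of the dimension of the solution space of a finite system of linear equations on the $245$-dimensional vector space $V^*\otimes\bigwedge^3 V^*$ (the conditions range over all of $V$ but span a finite-dimensional space of functionals), and to dispatch the first equality by pure $\G$-representation theory and only the second by computation. For the first equality I would observe that the defining condition of $\mathcal{T}(V)$ is equivalent to requiring $A(\eta_1;-)\in W$ for every $\eta_1\in V$, where $W\subseteq\bigwedge^3 V^*$ is the annihilator of the span $S$ of all associative $3$-vectors $\eta_2\wedge\eta_3\wedge(\eta_2\times\eta_3)$. Hence $\mathcal{T}(V)=V^*\otimes W$ and $\dim\mathcal{T}(V)=7\dim W$, so it suffices to prove $\dim W=7$.

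Since $\G$ permutes associative $3$-vectors, $W$ is a $\G$-submodule of $\bigwedge^3 V^*=\bigwedge^3_1\oplus\bigwedge^3_7\oplus\bigwedge^3_{27}$, and I would pin it down componentwise. The line $\bigwedge^3_1=\R\varphi_\chi$ is not contained in $W$, because $\varphi_\chi(\eta_2,\eta_3,\eta_2\times\eta_3)=\|\eta_2\wedge\eta_3\|^2\neq 0$ for linearly independent $\eta_2,\eta_3$. The component $\bigwedge^3_7=\{\iota_X(*\varphi_\chi):X\in V\}$ is contained in $W$: one checks $(*\varphi_\chi)(X,e_1,e_2,e_3)=0$ for all $X$ on a single associative frame $(e_1,e_2,e_3)$ using the explicit forms of $\varphi_\chi$ and $*\varphi_\chi$, and then extends to every associative $3$-vector by $\G$-equivariance together with the transitivity of $\G$ on associative $3$-planes. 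Finally $\bigwedge^3_{27}$ is not contained in $W$: evaluating the form $\beta_h(X,Y,Z)=\varphi_\chi(hX,Y,Z)+\varphi_\chi(X,hY,Z)+\varphi_\chi(X,Y,hZ)$ attached to a traceless symmetric endomorphism $h$ on an associative $3$-vector produces a nonzero number. By irreducibility this forces $W=\bigwedge^3_7$, so $\dim W=7$ and $\dim\mathcal{T}(V)=49$, with no computer assistance.

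For the vanishing $\dim\mathcal{T}_{CR2}(V)=0$, I would first record the resulting $\G$-module structure $\mathcal{T}(V)\cong V^*\otimes\bigwedge^3_7\cong V\otimes V\cong\R\oplus V\oplus\g_2\oplus S^2_0 V$, a multiplicity-free sum of four pairwise non-isomorphic irreducibles of dimensions $1,7,14,27$. Because $\G$ acts transitively on $\mathbb{Gr}^+(1,V)=S^6$ and the construction entering \eqref{eq:with-wn-V} is $\G$-equivariant, $\mathcal{T}_{CR2}(V)$ is again a $\G$-submodule of $\mathcal{T}(V)$, hence the direct sum of a subset of these four irreducibles. To conclude that it is zero it then suffices to exhibit, in each of the four summands, one element on which the linear conditions \eqref{eq:with-wn-V} fail for a suitable unit $v$ and Hermitian frame.

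The main obstacle is precisely this last step: in contrast with the first equality, the conditions \eqref{eq:with-wn-V} couple the two slots of $A$ through the cross products $v\times(-)$, so the explicit evaluation on each irreducible component is unpleasant by hand. In practice I would carry it out exactly as in the proof of Proposition \ref{prop:cr17}, working in the octonion model $V=\Im\O$ and using the Monte Carlo scheme of Remark \ref{rem:montecarlo}: assemble the linear system consisting of the torsion equations defining $\mathcal{T}(V)$ together with the equations \eqref{eq:with-wn-V} sampled at a random finite set of unit vectors and Hermitian frames, and verify that its rank is the full $245$. Since the true solution space is contained in each sampled one, a single run returning nullity $0$ already proves $\mathcal{T}_{CR2}(V)=0$, and the relevant {\tt sagemath} code is a direct adaptation of the one provided in the Appendix.
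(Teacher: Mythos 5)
Your proposal is correct, and it genuinely improves on the paper for the first equality while coinciding with the paper for the second. The paper disposes of $\dim\mathcal{T}(V)=49$ simply by citing \cite[Lemma 4.1]{FG1982}; you instead give a self-contained argument, observing that $\mathcal{T}(V)=V^*\otimes W$ with $W$ the annihilator of the span of the associative $3$-vectors $\eta_2\wedge\eta_3\wedge(\eta_2\times\eta_3)$ (note every such generator is, after orthogonalization, a scalar multiple of a unit associative $3$-vector, so this span is the $\G$-span of a single associative frame), and then pinning down $W=\bigwedge^3_7$ component by component in $\bigwedge^3V^*=\bigwedge^3_1\oplus\bigwedge^3_7\oplus\bigwedge^3_{27}$: the $\bigwedge^3_1$- and $\bigwedge^3_{27}$-steps are exactly right, and for the latter a concrete witness makes it one line (take $h$ diagonal traceless with eigenvalues $a_1,\dots,a_7$ and $a_1+a_2+a_3\neq 0$; then $\beta_h(e_1,e_2,e_3)=a_1+a_2+a_3\neq0$). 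This buys you the module structure $\mathcal{T}(V)\cong V^*\otimes\bigwedge^3_7\cong V\otimes V\cong\R\oplus V\oplus\g_2\oplus S^2_0V$ as a byproduct, which the paper only quotes from \cite{FG1982} in Remark \ref{rem:fg7}. For $\dim\mathcal{T}_{CR2}(V)=0$ your final mechanism — sample finitely many frames, check the combined linear system has nullity $0$, and use that $\mathcal{T}_{CR2}(V)$ is contained in every sampled solution space — is exactly the paper's proof via the {\tt sagemath} code in the Appendix. Your multiplicity-free reduction (test one element in each of the four irreducibles, since any nonzero invariant subspace must contain one of them) is precisely the computer-free strategy the paper itself proposes but does not execute in Remark \ref{rem:fg7} and in item (5) of the Conclusions; you correctly identify the hand computation on the coupled conditions \eqref{eq:with-wn-V} as the obstacle and, like the paper, defer to the machine at that point. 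So: first equality, a genuinely different and more informative route; second equality, the same route as the paper, with a clean conceptual framing on top.
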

\begin{proof}It is well known that $\dim(\mathcal{T}(V))=49$ \cite[Lemma 4.1]{FG1982}. Then \eqref{eq:with-wn-V} imposes an infinite system of linear equations on $\mathcal{T}(V))$, parametrized by quadruples $(v,w_1,w_2,w_3)$ as in the statement of the Lemma. These are obviously not linearly independent. Yet it turns out that it is generally sufficient to sample ten random quadruples to impose 49 linearly independent equations. A {\tt sagemath} code doing this is provided in the Appendix. It runs in about 1 hour on a 2.4 Ghz 8core.

\end{proof}

\begin{remark}\label{rem:fg7}
Fernandez and Gray provide in \cite{FG1982} an explicit decomposition of $\mathcal{T}(V)$ into a orthogonal direct sum of four irreducible $G_2$ representations, of dimensions, 1, 7, 14, 27, respectively. It is likely that working with these and with standard basis quadruples in $(\mathrm{Im}(\mathbb{O}),\times)$ one can obtain $\dim(\mathcal{T}_{CR2}(V))=0$ by imposing by hand a suitable subset of the equations $\eqref{eq:with-wn-V}$ without relying on computer algebra. This would however presumably take much more than an hour to be done.
\end{remark}		

\begin{proposition}\label{prop:par7}  The second   CR-integrability condition holds for a 2-fold VCP structure $(g,\chi)$ on a 7-manifold $M^7$ if and only if the VCP structure $(g,\chi)$ is parallel.
\end{proposition}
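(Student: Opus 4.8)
The plan is to reduce the statement to the purely algebraic computation $\dim\mathcal{T}_{CR2}(V)=0$ already performed in Lemma \ref{lemma:dimensions-T}, and then to read off the differential conclusion. By Lemma \ref{lemma:cr-2-for-g2}, the second CR-integrability condition for $(\mathbb{Gr}^+(1,M^7),B,J_{g,\chi})$ is equivalent to the system \eqref{eq:with-wn} holding at every $x\in M$ and every $v\in\mathbb{G}$ for one (hence any) Hermitian basis $w_1,w_2,w_3$ of $(E_v^\perp,J_{E_v^\perp})$. The crucial observation is that \eqref{eq:with-wn} is nothing but the defining relation \eqref{eq:with-wn-V} of $\mathcal{T}_{CR2}$ evaluated on the element $A_x:=\nabla\varphi_\chi\bigr\vert_x\in T_x^*M\otimes\bigwedge^3 T_x^*M$, where $A_x(\eta_1;\eta_2\wedge\eta_3\wedge\eta_4)=(\nabla_{\eta_1}\varphi_\chi)(\eta_2,\eta_3,\eta_4)$. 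Thus the proof amounts to showing that this ``intrinsic torsion'' $A_x$ is forced to vanish.

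The one step requiring an honest computation, and the step I would carry out first, is checking that $A_x$ belongs to the space $\mathcal{T}(T_xM)$ of algebraic intrinsic torsions, i.e. that $(\nabla_X\varphi_\chi)(\eta_2,\eta_3,\eta_2\times\eta_3)=0$ for all $X,\eta_2,\eta_3$. I would obtain this by differentiating the pointwise VCP identity $\varphi_\chi(\eta_2,\eta_3,\eta_2\times\eta_3)=\|\eta_2\wedge\eta_3\|^2$. Extending $\eta_2,\eta_3$ to vector fields with $\nabla\eta_i\bigr\vert_x=0$ and using $\nabla g=0$, the right-hand side has vanishing derivative at $x$ in any direction $X$; on the left, the Leibniz rule leaves only $(\nabla_X\varphi_\chi)(\eta_2,\eta_3,\eta_2\times\eta_3)$ and $\varphi_\chi(\eta_2,\eta_3,(\nabla_X\chi)(\eta_2,\eta_3))$. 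Since $\varphi_\chi(a,b,c)=\langle\chi(a,b),c\rangle$ and $g$ is parallel, both terms equal $\langle(\nabla_X\chi)(\eta_2,\eta_3),\eta_2\times\eta_3\rangle$, so their sum being zero gives $(\nabla_X\varphi_\chi)(\eta_2,\eta_3,\eta_2\times\eta_3)=0$. This computation also records the identity $(\nabla_X\varphi_\chi)(a,b,c)=\langle(\nabla_X\chi)(a,b),c\rangle$, which I will reuse at the end.

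With $A_x\in\mathcal{T}(T_xM)$ established, the second CR-integrability condition becomes exactly the statement that $A_x\in\mathcal{T}_{CR2}(T_xM)$ for every $x\in M$. Here I would invoke Lemma \ref{lemma:dimensions-T}, which gives $\dim\mathcal{T}_{CR2}(V)=0$; hence $A_x=\nabla\varphi_\chi\bigr\vert_x=0$ at every point, so $\nabla\varphi_\chi\equiv 0$. By the identity recorded above, $\nabla\varphi_\chi=0$ is equivalent to $\nabla\chi=0$, i.e. to $\chi$ being parallel. Conversely, if $\chi$ is parallel then $\nabla\varphi_\chi=0$ and every term of \eqref{eq:with-wn} vanishes identically, so the second CR-integrability condition holds trivially. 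This settles both implications.

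The main obstacle is concentrated entirely in Lemma \ref{lemma:dimensions-T}, namely the vanishing $\dim\mathcal{T}_{CR2}(V)=0$: this is a $\G$-equivariant linear-algebra fact, verified there by sampling enough Hermitian quadruples to cut the $49$-dimensional space $\mathcal{T}(V)$ down to zero. Absent the computer-algebra verification, I would instead try to prove it representation-theoretically, by testing the linear constraint \eqref{eq:with-wn-V} against each of the four irreducible $\G$-summands $W_1\oplus W_7\oplus W_{14}\oplus W_{27}$ of $\mathcal{T}(V)$ and showing that none survives (cf. Remark \ref{rem:fg7}). Granting that lemma, all the remaining steps are routine assembly.
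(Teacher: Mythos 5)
Your proof is correct and takes essentially the same route as the paper, whose proof of Proposition \ref{prop:par7} is exactly the combination of Lemma \ref{lemma:cr-2-for-g2} with the vanishing $\dim\mathcal{T}_{CR2}(V)=0$ from Lemma \ref{lemma:dimensions-T}. Your only additions --- the explicit check that $\nabla\varphi_\chi\bigr\vert_x\in\mathcal{T}(T_xM)$ and the equivalence $\nabla\varphi_\chi=0\Leftrightarrow\nabla\chi=0$ --- are standard facts (cf.\ \cite{FG1982}) that the paper leaves implicit in its one-line proof, and your closing remark about a representation-theoretic alternative matches the paper's own Remark \ref{rem:fg7}.
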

\begin{proof}
Immediate from Lemmas \ref{lemma:cr-2-for-g2} and \ref{lemma:dimensions-T}.
\end{proof}

\subsection{The second  CR-integrability  for  a $8$-manifold $(M,g)$ with a VCP structure.}
The analysis of the second  CR-integrability  for  a $8$-manifold $(M,g)$ with a VCP structure goes precisely along the same lines as for the 7-dimensional case. The only change is that now $v$ is an orthonormal frame representing an element in  $\mathbb{G}=\mathbb{Gr}^+ (2, M)$ instead of a unit vecor representing an element in $\mathbb{Gr}^+ (1, M)$. With the same proof, we have the following 8-dimensional analogue of  Lemma \ref{lemma:cr-2-for-g2}.
\begin{proposition}\label{prop:base8}
The second   CR-integrability  holds  for  $(\mathbb{Gr}^+ (2, M^8), B, J_{g ,\chi})$ if and only if  for  any       $x \in M,    v  \in \mathbb{G}$ and some (and hence any) Hermitian basis  $w_1,w_2,w_3$ of $(E_v ^\perp,J_{E_v ^\perp})$ the following  conditions  hold

\begin{align}
(\nabla_{v \times w_q}\varphi_\chi) ( v\wedge w_j \wedge w_p)  - (\nabla_{v \times w_p}\varphi_\chi) ( v \wedge w_j\wedge w_q)\nonumber \\
+  (\nabla_{ w_q} \varphi_\chi) ( v\wedge (v \times  w_j) \wedge w_p)  - (\nabla_{ w_p}\varphi_\chi) ( v\wedge (v \times w_j)\wedge w_q) =0
\label{eq:with-wn8}
\end{align}
for  any   $j,p,q\in[1,6]$, where $w_{3+k}=v\times w_k$ for $k\in[1,3]$.
\end{proposition}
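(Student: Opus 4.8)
The plan is to repeat \emph{verbatim} the derivation that produced Lemma \ref{lemma:cr-2-for-g2} in the seven-dimensional case, carrying the single structural modification that the base point $v\in\mathbb{G}=\mathbb{Gr}^+(2,M^8)$ is now an oriented unit decomposable $2$-vector $v=v_1\wedge v_2$ rather than a unit tangent vector, and that the VCP-form $\varphi_\chi$ is now a $4$-form. The starting point is the universal reformulation \eqref{eq:pqrgen} of the second CR-integrability condition, which was derived in Subsection \ref{subs:FN} for arbitrary $r$ and so applies unchanged. Hence the whole task reduces to producing the eight-dimensional analogue of the translation Lemma \ref{lemma:to-prove-pqrgen}, namely the identities
\begin{align*}
\imath_{e_q}(\imath_{e_j}\tilde{\nabla}_{J_{g,\chi}e_p}\tilde\om)&=(\nabla_{v\times w_p}\varphi_\chi)(v\wedge w_j\wedge w_q),\\
\imath_{e_q}(\imath_{J_{g,\chi}e_j}\tilde{\nabla}_{e_p}\tilde\om)&=(\nabla_{w_p}\varphi_\chi)(v\wedge(v\times w_j)\wedge w_q),
\end{align*}
and then substituting these into \eqref{eq:pqrgen}, exactly as in the passage from Lemma \ref{lemma:to-prove-pqrgen} to Lemma \ref{lemma:cr-2-for-g2}. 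Since \eqref{eq:pqrgen} is intrinsic, the resulting conditions automatically hold for one Hermitian basis if and only if they hold for all, which accounts for the ``some (and hence any)'' clause.

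First I would set up the auxiliary geometry. Fixing a Hermitian basis $w_1,w_2,w_3$ of $(E_v^\perp,J_{E_v^\perp})$, putting $w_{3+k}=v\times w_k$, and choosing a horizontal orthonormal frame $(e_i)$ of $T_v^{\hor}\mathbb{G}$ with $d\pi_v(e_a)=w_a$, I would introduce normal coordinates at $\pi(v)$ with coordinate frame $(W_i)$ and form the horizontal lifts $\hat e_i$ of the $W_i$. The eight-dimensional analogues of Lemmas \ref{lemma:gs-prop-7-2} and \ref{lemma:gs-prop-7-2-bis} assert that $(\tilde{\nabla}_{\hat e_i}\hat e_j)^{\hor}\vert_v=0$ and that the horizontal lift of the parallel-transported $2$-plane has vanishing horizontal covariant derivative at $v$. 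These follow \emph{verbatim} from the Gray--K\"unzle formula \cite[Proposition 7.2(i)]{GK02}, because $\mathbb{Gr}^+(2,M)\to M$ is again a Riemannian submersion arising from the Levi-Civita connection on the vector bundle $\bigwedge^2 TM$, so the horizontal-lift covariant derivative is governed by the same vertical-valued curvature term, which dies under the horizontal projection. The only genuinely new bookkeeping is that parallel transport now acts on an oriented $2$-plane: I would transport the frame $(v_1,v_2)$ to vector fields $Z_1,Z_2$ on a normal neighbourhood and take their horizontal lifts $\mathcal{H}_1,\mathcal{H}_2$, so that $\nabla_u Z_1=\nabla_u Z_2=0$ at $\pi(v)$ for every $u\in T_{\pi(v)}M$.

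Next I would run the translation computation. Writing $f_p=J_{g,\chi}e_p$, so that $u_p=d\pi_v f_p=J_{E_v^\perp}(w_p)=v\times w_p$, and using that $\tilde\om$ depends only on horizontal components together with the eight-dimensional form of \eqref{eq:omega-via-phi}, namely $\tilde\om_y(X,Y)=\varphi_\chi(Z_1\wedge Z_2\wedge d\pi_yX\wedge d\pi_yY)$ for the $4$-form $\varphi_\chi$, the derivative $(\tilde{\nabla}_{\hat f_p}(\tilde\om(\hat e_j,\hat e_q)))\vert_v$ becomes, along the horizontal lift of the geodesic with initial velocity $u_p$, the ordinary derivative of $\varphi_\chi(Z_1,Z_2,W_j,W_q)$. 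Because $Z_1,Z_2,W_j,W_q$ are all parallel in the direction $u_p$ at $\pi(v)$, this equals $(\nabla_{u_p}\varphi_\chi)(v_1,v_2,w_j,w_q)=(\nabla_{v\times w_p}\varphi_\chi)(v\wedge w_j\wedge w_q)$, which is the first identity; the second is obtained identically after replacing $e_j$ by $J_{g,\chi}e_j$. Substituting both identities into \eqref{eq:pqrgen} then yields \eqref{eq:with-wn8} for all $j,p,q\in[1,6]$, which is the assertion.

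I expect the only real obstacle to be confirming that the two auxiliary lemmas survive the replacement of the sphere fiber $\mathbb{Gr}^+(1,T_xM)$ by the Grassmann fiber $\mathbb{Gr}^+(2,T_xM)$: in particular that the vertical curvature term in the Gray--K\"unzle formula still projects to zero horizontally, and that parallel transport of a decomposable unit $2$-vector stays decomposable and unit, so that $\mathcal{H}_1\wedge\mathcal{H}_2$ genuinely represents a path in $\mathbb{G}$. Both points are structural rather than computational: the first is immediate because the horizontal projection annihilates every vertical vector irrespective of the fiber, and the second holds because parallel transport is an orientation- and metric-preserving isomorphism commuting with $\wedge$. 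Everything past this is the same index bookkeeping as in the seven-dimensional proof, with one extra plane-slot inserted into each occurrence of $\varphi_\chi$.
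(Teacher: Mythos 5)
Your proposal is correct and coincides with the paper's own proof, which consists precisely of the remark that the argument ``goes precisely along the same lines as for the 7-dimensional case'' with $v$ now an oriented $2$-plane, i.e., transporting \eqref{eq:pqrgen} and the analogues of Lemmas \ref{lemma:gs-prop-7-2}, \ref{lemma:gs-prop-7-2-bis} and \ref{lemma:to-prove-pqrgen} verbatim to $\mathbb{Gr}^+(2,M^8)$. You in fact supply slightly more detail than the paper does (the parallel frame $(Z_1,Z_2)$ for the transported $2$-plane and the observation that parallel transport preserves decomposable unit $2$-vectors), all of which is sound.
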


Theorem \ref{thm:main}(7) for the (3,8) case can then be proved by
 the  following     reduction argument.  First  we  recall  that for any $x\in M^8$, one has   $ (\nabla \varphi_\chi)_x \in \mathcal{W}(T_xM) \subset T_xM^* \otimes \bigwedge^4  T_xM^*$, where  
 \begin{align*}
 \mathcal{W}(V)=\{A\in V^*\otimes \bigwedge^4 V^*\,|\, A(\eta_1;\eta_2 \wedge \eta_3 \wedge \eta_4 \wedge&\chi(\eta_2,\eta_3,\eta_4))=0,\,\\
 & \forall \eta_1,\eta_2,\eta_3,\eta_4\in V\},
\end{align*}
see \cite[\S 4]{Fernandez1986}.\footnote{
To emphasize the fact that $(\nabla \varphi_\chi)_x \in \mathcal{W}(T_xM)$, Fernandez calls $\mathcal{W}$ ``the space of covariant derivatives of the fundamental 4-form'' in \cite{Fernandez1986}.}  For any $\xi\in V$, we have a restriction operator
\[
\big\vert_{E_{\xi}^\perp}\colon \mathcal{W}(V)\to \mathcal{T}(E_{\xi}^\perp)
\]
given by
\[
A\vert_{E_{\xi}^\perp} (\eta_1; \eta_2 \wedge \eta_3\wedge \eta_4): = A (\eta_1; \xi \wedge  \eta_2 \wedge \eta_3\wedge \eta_4 ).
\]
Let us denote by $\mathcal{W}_{CR2}(V)$ the subspace of $\mathcal{W}(V)$ consisting of those elements $A$ such that  for  any  $v  \in \mathbb{Gr}^+ (2, V)$ and some (and hence any) Hermitian basis  $w_1,w_2,w_3$ of $(E_v ^\perp,J_{E_v ^\perp})$ the condition
\begin{align}
		A(v \times w_q; v \wedge w_j \wedge w_p)  - A(v \times w_p; v \wedge w_j \wedge w_q)\nonumber \\
		+  A( w_q; v \wedge( v \times  w_j) \wedge w_p)  - A( w_p; v \wedge (v \times w_j) \wedge w_q) =0
		\label{eq:with-wn-V2}
		\end{align}
holds for 	any $j,p,q\in[1,6]$, where $w_{3+k}=v\times w_k$ for $k\in[1,3]$.  It is straightforward  to check  that the restriction operator induces a restriction operator
\[
\big\vert_{E_{\xi}^\perp}\colon \mathcal{W}_{CR2}(V)\to \mathcal{T}_{CR2}(E_{\xi}^\perp).
\]
By Lemma \ref{lemma:dimensions-T} we therefore have that if  $A\in \mathcal{W}_{CR2}(V)$, then $A\vert_{E_{\xi}^\perp}=0$ for any $\xi$, and this means $A=0$.

\section{Conclusions and final remarks}\label{sec:fin}

(1)  In this paper  we unified  and extended  the construction  of a CR-twistor  space  by  LeBrun, Rossi and Verbitsky respectively,  to the case  when the underlying  Riemannian manifold  admits a  VCP structure. We solved the  question of  the formal integrability  of the
CR-structure on the twistor  space,  recovering   the results  by LeBrun and  Rossi respectively,  and   correcting the   result by Verbitsky.

(2)  We expressed  the  formal  integrability  of   a CR-structure  in terms of a torsion  tensor   on the  underlying   space. 
Our method  can be  applied    for    expressing the formal integrability  of   a CR-structure   $(B, J_B)$ on a  smooth manifold $M$ as follows.   First  we  pick up  any  complement  $B^\perp$ of $B$ in $TM$.
Then  we  choose a  metric  $g$ on $M$ such that   (i)  $B ^\perp$ is orthogonal  to $B$,  (ii) $g\vert_B$  is a  Hermitian metric   with respect  to  $J_B$.  Denote by $\Pi _{ B^\perp} $ and $\Pi_B$ the orthogonal  projections  to  $B^\perp$ and $B$ respectively. Define the tensor $T\in \bigwedge^2B^*\otimes TM$ as
\begin{align*}
T(X,Y)=&\underbrace{\Pi_{ B^\perp} ([J_B X, J_B Y] - [X, Y])}_{T^\ver(X,Y)}\\
&\qquad +\underbrace{ \Pi_B ([J_BX,J_BY]-[X,Y])-J_B\circ \Pi _B([X,J_BY]+[J_BX,Y])}_{T^\hor(X,Y)}.
\end{align*}
Then the CR-structure is formally integrable if and only if $T$ vanishes. More precisely,  the first CR-integrability  condition  for $(B, J_B)$ holds  if and only  the tensor  $  T^\ver$  vanishes and the second  CR-integrability condition holds if and only if the tensor $T^\hor$ that vanishes.
 Note that the  integrability  of a CR-structure  has been  investigated   from  the point  of views of  integrability and formal  integrability of  $G$-structures, see \cite[\S 1.6.1, Theorem 1.14]{DT2006}  for a detailed discussion. They also showed that  the intergrability of a CR-structure, viewed as a $G$-structure, implies that the  associated  Levi form vanishes \cite[p. 71]{DT2006}, see also   \cite[p.76]{BHLN20}.  We would like  to mention that   our expression  of the  integrability  of a CR-structure   in terms  of the vertical and horizontal  component of a torsion tensor is  reminiscent  to   O'Brian- Rawnsley's expression of the Nijenhuis  tensor of an almost complex structure on certain  twistor  spaces  in terms of   the  curvature  and torsion  tensor of the associated  connection \cite{OR85}.  

(3)  The characterization of   metric  of constant  curvature on  a Riemannian manifold    of   dimension at  least   3 used at the end of the proof of Theorem \ref{thm:main}(3)
  can be  reformulated in terms  of   representation theory  by saying that, given  $w_0 \in \mathbb{Gr}^+ (2, \R^n)$ and $R \in \mathcal{AC} (\R^n)$,  if   for any  $\gamma \in \SO (n)$, $n \ge  3$
\begin{equation}\label{eq:son}   
  {\gamma}^* R (w_0) \in \so (E_{w_0})
\end{equation}
then $R = \lambda R^{\Id}$ for some $\lambda\in \mathbb{R}$. The whole statement of Theorem \ref{thm:main}(3)
 can  be reformulated  in a similar way: given  $w_0 \in \mathbb{Gr}^+ (2, \R^n)$ and $R \in \mathcal{AC} (\R^n)$,  $n = 7$ or $n = 8$,   if  for  any  $\gamma \in  \G$ or   $\gamma \in \Spin (7)$  respectively,  we have
 \begin{equation}\label{eq:g2spin7}   
 \gamma^* R (w_0) \in  \Rr_{\om_0} \subset \so (\R^n),
 \end{equation}
 where $\Rr_{\om_0}$ is the linear subspace of $\so (\R^n)$ defined by \eqref{eq:CRcommuten},
 then $R = \lambda R^{\Id}$. 
 Using representation  theory, it is not hard to see  that the space  of $\G$-invariant  algebraic  curvatures   on $\R^7$ has dimension 1 and the  space of $\Spin(7)$-invariant  algebraic  curvature on $\R^8$  also has  dimension 1.  Thus the   equations \eqref{eq:g2spin7}  has only   invariant  solutions.  It would be interesting to find a  proof using only representation theory for   Theorem \ref{thm:main}(3) in the (2,7) case.
For this case,  one can compute that   the dimension  of $\Rr_{\om}$ in  the RHS of \eqref{eq:g2spin7}  is  $10$. 

(4)    Like in the (2,7) case, also in the (3,8) case Theorem  \ref{thm:main} (3) can be proved  by directly using   computer algebra, but      this   takes   much longer   machine  time with respect to the 7-dimensional case (approximatively 11 hours on a 2.4 Ghz 8core). One  could     shorten   the    machine  time by investing  human time   on  writing code for  decomposition  of   the space  of   algebraic  curvatures  as $\G$ -and $\Spin(7)$-modules.

(5)  We  gave a new  characterization  of  torsion-free  $\G$ and $\Spin (7)$-structures.  As  we  noted in Remark \ref{rem:fg7}, it would be  interesting  to find a proof of Lemma \ref{lemma:dimensions-T} which would be  only based on representation  theory.


\section*{Appendix: A code for the CR conditions on a 7-dimensional manifold with VCP}

To begin with, we define the command {\tt randomorthogononal2}. It produces a random element in $O(7)$ with rational entries as follows: two random 6-dimensional vectors with entries in $\{0,1\}$ are generated and stereographically projected on $S^7$ so to produce two unit vectors in $\mathbb{R}^7$. Then we produce the orthogonal reflections with respect to the orthogonal hyperplanes to these two vectors and multiply them. Elements in $SO(7)$ obtained as the multiplication of two reflections are general enough so that their first two columns can be an arbitrary orthonormal pair in $\mathbb{R}^7$. The pairs we produce are not uniformly distributed on the Stiefel manifold of pairs of orthonormal vectors in $\mathbb{R}^7$, but for our aims this is not important. For later use we also define the orthogonal projection on the hyperplane orthogonal to a unit vector in $\mathbb{R}^7$. 

\lstdefinestyle{custompy}{
  belowcaptionskip=1\baselineskip,
  breaklines=true,
  frame=L,
  xleftmargin=\parindent,
  language=C,
  showstringspaces=false,
  basicstyle=\footnotesize\ttfamily,
  keywordstyle=\bfseries\color{blue},
  commentstyle=\itshape\color{blue},
  identifierstyle=\color{blue},
  stringstyle=\color{blue},
}

\lstset{escapechar=@,style=custompy}

\begin{lstlisting}
def stereographic7(myvector):
    v=vector(QQ,[0,0,0,0,0,0,0])
    somma=sum(myvector[j]^2 for  j in [0..5])
    denom=1+somma
    v[6]=(-1+somma)/denom
    for i in [0..5]:
        v[i]=2*myvector[i]/denom
    return v
        
def reflection7(myvector):
    M=matrix(QQ,7)
    v=matrix(QQ,1,7)
    for h in [0..6]:
        v[0,h]=myvector[h]  
    for h in [0..6]:
         M[h]=matrix.identity(7)[h]-2*(matrix.identity(7)[h]*transpose(v))*v
         N=transpose(M)
    return N

 
def randomorthogonal2(): 
    A=matrix(QQ,7)   
    s1=[randint(0, 1) for i in [0..5]]
    t1=stereographic7(s1)
    s2=[randint(0, 1) for i in [0..5]]
    t2=stereographic7(s2)
    A=reflection7(t1)*reflection7(t2)   
    return A 
    
def projection7(myvector):
    M=matrix(QQ,7)
    v=matrix(QQ,1,7)
    for h in [0..6]:
        v[0,h]=myvector[h]  
    for h in [0..6]:
         M[h]=matrix.identity(7)[h]-(matrix.identity(7)[h]*transpose(v))*v
         N=transpose(M)
    return N   
\end{lstlisting}

\medskip
We define the 2-fold VCP on $\mathbb{R}^7$ by means of the 3-form
\[
e^{123}+e^{145}+e^{167}+e^{246}-e^{257}-e^{347}-e^{356}
\]
and define the operator $J_v=v\times -\colon \mathbb{R}^7\to \mathbb{R}^7$. When $v$ is a norm one vector, the operator $J_v$ induces a complex multiplication on the hyperplane $v^\perp$. 
\begin{lstlisting}
def unpermutede(myvector):
    a=0
    if myvector==[1,2,3]:
        a=1
    elif myvector==[1,4,5]:
        a=1
    elif myvector==[1,6,7]:
        a=1
    elif myvector==[2,4,6]:
        a=1
    elif myvector==[2,5,7]:
        a=-1
    elif myvector==[3,4,7]:
        a=-1
    elif myvector==[3,5,6]:
        a=-1
    return a
    
def e(myvector):
    a=0
    Pp=[]
    P=Permutations([1,2,3])
    segni=[]
    for p in P:
        segni.append(Permutation(p).sign())
    P=Permutations(myvector)
    for p in P:
        v=[]
        for j in [0..2]:
            v.append(p[j])
        Pp.append(v)       
    a=sum(segni[i]*unpermutede(Pp[i]) for i in [0..5])
    return a
    
def Mult(n):
    M=matrix(ZZ,7,7)
    for i in [1..7]:
        for j in [1..7]:
            if Permutations([n,i,j]).cardinality()==6:
                M[i-1,j-1]=e([n,i,j])
    return M
    
def J(v):
    M=sum(v[i]*Mult(i+1) for i in [0..6])
    return M
\end{lstlisting}
\medskip    
    
Next we impose the algebraic curvature equations on the set of variables $R_{ijkl}$ giving the coefficients of a 4-index tensor in $\mathbb{R}^7$. We collect all these equations in a list of equations named {\tt listone} and we extract the matrix of coefficients of the linear system given by the algebraic curvature equations in the variables $R_{ijkl}$ and call this matrix {\tt MAC}. In order to extract this matrix we use the fact that the coefficient $a_k$ of the variable $x^k$ in the linear equation $a_ix^i$ is $a_k=\partial_{x^k}(a_ix^i)$.    To check we have correctly implemented the algebraic curvature equations we compare the dimension of the space of solution of the algebraic curvature equations obtained from the rank of the matrix {\tt MAC} with the dimension given by \cite[Corollary 1.8.4, p. 45]{Gilkey2001}, finding they match.
    
\begin{lstlisting} 
dim=7
listone=[]
for i in [0..dim-1]:
    for j in [0..dim-1]:
        for k in [0..dim-1]:
            for l in [0..dim-1]:
                listone.append(var('R_%d%d%d%d' % (i,j,k,l))+var('R_%d%d%d%d' % (i,j,l,k)))
                listone.append(var('R_%d%d%d%d' % (i,j,k,l))+var('R_%d%d%d%d' % (j,i,k,l)))
                listone.append(var('R_%d%d%d%d' % (i,j,k,l))+var('R_%d%d%d%d' % (i,k,l,j))+var('R_%d%d%d%d' % (i,l,j,k)))
                listone.append(var('R_%d%d%d%d' % (i,j,k,l))-var('R_%d%d%d%d' % (k,l,i,j)))
                
n=0
MAC = matrix(ZZ,4*7^4,7^4)
for c in listone:
    for i in [0..6]:
        for j in [0..6]:
            for k in [0..6]:
                for l in [0..6]:
                    MAC[n,l+7*k+7^2*j+7^3*i]=derivative(c,var('R_%d%d%d%d' % (i,j,k,l)))
    n=n+1
    
print(7^4-MAC.rank(),1/12*7^2*(7^2-1));
\end{lstlisting}

\medskip
Now we implement the first CR condition for a triple $(w_1,w_2,w_1)$ where $(w_1,w_2)$ is a random orthonormal pair in $\mathrm{Im}(\O)$. To begin with we implement 
the bilinear function $w_1\otimes w_2\mapsto {w_1}^i{w_2}^jR_{ijkl}$. Since the random orthonormal pairs we produce have rational coefficients with respect to the standard basis of $\mathbb{R}^7$, also all of the matrices corresponding to curvature operators, projections and complex multiplications associated with these pairs will have rational coefficients. Since the equations defining the first CR condition are linear in the variables $R_{ijkl}$, it will be computationally convenient to multiply these matrices by a suitable integer to get rid of the denominators. To achieve this we collect the denominator appearing into a vector of rational numbers in the list of integers {\tt denominators}. Finally, we produce a list of 100 random first CR conditions. Each time we pick a random pair $(w_1,w_2)$ of orthonormal unit vectors in $\mathbb{R}^7$, we write the left hand side of the first CR equation $[\Pi_{\so(E_{w_2}^\perp)}R_{w_1\wedge w_2},J_{E_{w_2}^\perp}]=0$ in the form $\Pi_{\so(E_{w_2}^\perp)}R_{w_1\wedge w_2}J_{E_{w_2}^\perp}-J_{E_{w_2}^\perp}\Pi_{\so(E_{w_2}^\perp)}R_{w_1\wedge w_2}$ and multiply it by a suitable nonzero integer to get rid of the denominators. We collect all of these linear equations into a list and extract the matrix of coefficients in the variables $R_{ikl}$ as above. We denote the this matrix by {\tt MCR}. We join the matrices {\tt MAC} and {\tt MCR} into a single matrix {\tt M}. This matrix is the matrix of coefficients of the linear system consisting of the algebraic curvature equations (fixed) together with 100 random first CR equations. Finally, we compute the rank of this matrix to determine the dimension of the space of algebraic curvatures satisfying the 100 random first CR equations. This is an upper bound for the dimension of the space of algebraic curvatures satisfying all of the first CR equations.

\medskip
\begin{lstlisting}   
def ERRE(w1,w2):
    M=matrix(SR,7)
    for k in [0..6]:
        for l in [0..6]:
            M[k,l]=sum(sum(w1[i]*w2[j]*var('R_%d%d%d%d' % (i,j,k,l)) for j in [0..6]) for i in [0..6])
    return M

def denominators(myvector):
    v=matrix(ZZ,[[0,0,0,0,0,0,0]])
    for i in [0..6]:
        v[0,i]=myvector[i].denominator()
    return v

dim=7
sort=100
listone=[]
for p in [1..sort]:
    A=randomorthogonal2()
    w1=A[0]
    w2=A[1]
    d1=lcm(denominators(w1)[0])
    d2=lcm(denominators(w2)[0])
    d=d2^4*d1
    Pro=projection7(w2)
    Rr=ERRE(w1,w2)
    Jay=J(w2)
    M=d*(Pro*Rr*Pro*Jay-Jay*Pro*Rr*Pro)
    for l in [0..dim-1]:
        for h in [0..dim-1]:
            listone.append(M[l,h])
            
n=0
MCR = matrix(ZZ,sort*7^2,7^4)
for c in listone:
    for i in [0..6]:
        for j in [0..6]:
            for k in [0..6]:
                for l in [0..6]:
                    MCR[n,l+7*k+7^2*j+7^3*i]=derivative(c,var('R_%d%d%d%d' % (i,j,k,l)))
    n=n+1                

M=MAC.stack(MCR)

print(7^4-M.rank());
\end{lstlisting}

\medskip
One can write a code solving the first CR condition on $\mathbb{R}^8$ in essentially the same way: one uses the 4-form
\begin{align*}
e^{0123}+e^{0145}&+e^{0167}+e^{0246}-e^{0257}-e^{0347}-e^{0356}\\
&+e^{4567}+e^{2367}+e^{2345}+e^{1357}-e^{1346}-e^{1256}-e^{1247}
\end{align*}
to define the 3-fold VCP on $\mathbb{R}^8$ and uses the multiplication of three reflections to produce random orthonormal triples in $\mathbb{R}^8$.
We omit the details.
 \vskip .7cm
 
Now, we provide a code implementing the second CR condition on $\mathbb{R}^7$. It uses a few of the functions defined in the code implementing the second CR condition that are not repeated here. The strategy is very similar to what we did for the first CR condition. First we impose the antisymmetry conditions on the structure constants $A_{ijkl}$ of an element in $\bigotimes^4 V^*$, where $V=\mathbb{R}^7$, in order to have it be an element in $V^*\otimes \bigwedge^3 V^*$. As a check, at the end we compare the dimension of the space of solutions found this way with the expected dimension of $7\cdot {{7}\choose{3}}=245$.  
\begin{lstlisting}
dim=7
listone2=[]
for i in [0..dim-1]:
    for j in [0..dim-1]:
        for k in [0..dim-1]:
            for l in [0..dim-1]:
                listone2.append(var('A_%d%d%d%d' % (i,j,k,l))+var('A_%d%d%d%d' % (i,k,j,l)))
                listone2.append(var('A_%d%d%d%d' % (i,j,k,l))+var('A_%d%d%d%d' % (i,j,l,k)))


n=0
MACA = matrix(ZZ,4*7^4,7^4)
for c in listone2:
    for i in [0..6]:
        for j in [0..6]:
            for k in [0..6]:
                for l in [0..6]:
                    MACA[n,l+7*k+7^2*j+7^3*i]=derivative(c,var('A_%d%d%d%d' % (i,j,k,l)))
    n=n+1

print(7^4-MACA.rank(),7*7*5);
\end{lstlisting}
We implement 
the multilinear function $w_1\otimes w_2\otimes w_3\otimes w_4\mapsto {w_1}^i{w_2}^j{w_3}^k{w_4}^lA_{ijkl}$, randomly produce 300 orthonormal triples $\eta_1,\eta_2,\eta_3$ in $\mathbb{R}^7$ and for each of these triples we impose the equation $A(\eta_1;\eta_2\wedge\eta_3\wedge(\eta_2\times \eta_3))=0$. As a check, at the end we compare the dimension of the resulting space of solutions with the expected value of 49, the dimension of the space of algebraic intrinsic torsions on $V$.
\begin{lstlisting}
def AAA(w1,w2,w3,w4):
    M=sum(sum(sum(sum(w1[i]*w2[j]*w3[k]*w4[l]*var('A_%d%d%d%d' % (i,j,k,l)) for l in [0..6]) for k in [0..6]) for j in $
    return M  
    
dim=7
sort=300
listaccia=[]
for p in [1..sort]:
    A=randomorthogonal3()
    x=A[0]
    y=A[1]
    z=A[2]
    d1=lcm(denominators(x)[0])
    d2=lcm(denominators(y)[0])
    d3=lcm(denominators(z)[0])
    d=d1*d2^2*d3^2
    Jay=J(y)
    B=d*AAA(x,y,z,Jay*z)
    listaccia.append(B)

n=0
MCRA = matrix(ZZ,sort,7^4)
for c in listaccia:
    for i in [0..6]:
        for j in [0..6]:
            for k in [0..6]:
                for l in [0..6]:
                    MCRA[n,l+7*k+7^2*j+7^3*i]=derivative(c,var('A_%d%d%d%d' % (i,j,k,l)))
    n=n+1

MA=MACA.stack(MCRA)

print(7^4-MA.rank(),49);
    
\end{lstlisting} 
Finally we produce 10 random orthonormal bases $(w_1,w_2,w_3,w_4,w_5,w_6,v)$ with $(w_1,w_2,w_3)$ a Hermitian basis with respect to $J-v=v\times-$. To do this, we first produce a random orthonormal basis $(u_1,u_2,u_3,u_4,u_5,u_6,u_7)$; then using a reflection (if needed) we change the third vector into $u_1\times u_2$, while keeping $u_1$ and $u_2$ fixed. This way we obtain a orthonormal basis of the form $(u_1,u_2,u_1\times u_2,\tilde{u}_4,\tilde{u}_5,\tilde{u}_6,\tilde{u}_7)$. We set $w_1=u_1$, $w_2=u_2$, $w_3=u_1\times u_2$, $v=\tilde{u}_7$ and $w_4=v\times w_1$,  $w_5=v\times w_2$, $w_6=v\times w_3$. Next we implement the equations defining $\mathcal{T}_{CR1}(V)$ for each of these 7-ples and print the dimension of the space of solutions. Since in the first part of the code the reflection matrix was defined only when the input was a norm one vector, we implement at the beginning of the code here its version for arbitrary nonzero vectors. 
\begin{lstlisting}
def reflectionunnorm7(myvector):
    somma=sum(myvector[j]^2 for  j in [0..6])
    M=matrix(QQ,7)
    v=matrix(QQ,1,7)
    for h in [0..6]:
        v[0,h]=myvector[h]
    for h in [0..6]:
         M[h]=matrix.identity(7)[h]-(2/somma)*(matrix.identity(7)[h]*transpose(v))*v
         N=transpose(M)
    return N

def randomg2():
    A=matrix(QQ,7)
    s1=[randint(0, 1) for i in [0..5]]
    t1=stereographic7(s1)
    s2=[randint(0, 1) for i in [0..5]]
    t2=stereographic7(s2)
    s3=[randint(0, 1) for i in [0..5]]
    t3=stereographic7(s3)
    A=reflection7(t1)*reflection7(t2)*reflection7(t3)
    At=transpose(A)
    u1=At[0]
    u2=At[1]
    u3=At[2]
    J1=J(u1)
    u3good=J1*u2
    if u3good==u3:
        B=matrix.identity(7)
    else:
        z=u3good-u3
        B=reflectionunnorm7(z)
    C=transpose(B*A)
    v=C[6]
    D=C
    D[3]=J(v)*C[0]
    D[4]=J(v)*C[1]
    D[5]=J(v)*C[2]
    return D

def randomsimpleg2():
    A=matrix(QQ,7)
    A=matrix.identity(7)
    At=transpose(A)
    u1=At[0]
    u2=At[1]
    u3=At[2]
    J1=J(u1)
    u3good=J1*u2
    if u3good==u3:
        B=matrix.identity(7)
    else:
        z=u3good-u3
        B=reflectionunnorm7(z)
    C=transpose(B*A)
    v=C[6]
    D=C
    D[3]=J(v)*C[0]
    D[4]=J(v)*C[1]
    D[5]=J(v)*C[2]
    return D
    
dim=7
sort=10
listuccia=[]
for p in [1..sort]:
    A=randomg2()
    v=A[0]
    d0=lcm(denominators(v)[0])
    peco=[A[1],A[2],A[3],A[4],A[5],A[6]]
    Jv=J(v)
    for wq in peco:
        for wj in peco:
            for wp in peco:
                d1=lcm(denominators(wq)[0])
                d2=lcm(denominators(wj)[0])
                d3=lcm(denominators(wp)[0])
                d=d0^2*d1*d2*d3
                B=d*(AAA(Jv*wq,v,wj,wp)-AAA(Jv*wp,v,wj,wq)+AAA(wq,v,Jv*wj,wp)-AAA(wp,v,Jv*wj,wq))
                listuccia.append(B)

n=0
MCRAg2 = matrix(ZZ,sort*6^3,7^4)
for c in listuccia:
    for i in [0..6]:
        for j in [0..6]:
            for k in [0..6]:
                for l in [0..6]:
                    MCRAg2[n,l+7*k+7^2*j+7^3*i]=derivative(c,var('A_%d%d%d%d' % (i,j,k,l)))
    n=n+1

MAg2=MA.stack(MCRAg2)

print(7^4-MAg2.rank());    
    
        
    
\end{lstlisting}

\section*{Acknowledgement}  We  would like  to thank Svatopluk Kr\'ysl  for  his discussion on CR-structures  from       Cartan geometry  point of view the Editor and   the Referees  for   helpful comments,  which led us  to a better  result and better exposion of this  paper.


\begin{thebibliography}{99999} 
\bibitem[ACFR20]{ACFR20} D. Alekseevsky, I. Chrysikos, A. Fino and A. Raffero, Homogeneous 8-manifolds admitting invariant Spin(7)-structures, International J. of Math,  31(2020), No. 08, 2050060.
\bibitem[BHLN20]{BHLN20} J. Brinkschulte, D.  Hill,  J. Leiterer, M. Nacinovich, Aspects of the Levi form, Bollettino dell’Unione Matematica Italiana 13 (2020), 71--89.
\bibitem[Besse86]{Besse1986} A. L. Besse, Einstein manifolds, Springer 1986.
\bibitem[BG67]{BG1967} R.B. Brown, A. Gray, Vector cross products, Comment. Math. Helv. 42 (1967) 222-236.
\bibitem[Bejancu86]{bejancu} A. Bejancu, Geometry of CR submanifolds, Reidel (1986).
\bibitem[Bryant82]{bryant} R.L.Bryant, Submanifolds and special structures on the octonions, J. Diff. Geom. 17 (1982), 185--232.
\bibitem[Bryant87]{bryant87} R.L.Bryant, Metrics with Exceptional Holonomy, Annals of Mathematics 126 (1987), 525--576.
\bibitem[BGP14]{BGP2014} F. Bonetti, T.W. Grimm and T.G. Pugh, Non-Supersymmetric F-theory Compactifications on Spin(7) Manifolds, JHEP 01 (2014) 112.
\bibitem[CS09]{CS09} A. \v Cap, J. Slovak, Parabolic Geometries I, AMS, 2000
\bibitem[DT06]{DT2006} S. Dragomir and G. Tomassini, Differential Geometry
 and Analysis on CR Manifolds,
Progress in Mathematics, Volume 246, Birkh\"auser,  2006.
\bibitem[FG82]{FG1982}M.  Fern\'andez, A. Gray, Riemannian manifolds with structure group G2. Ann. Mat. Pura Appl. 32(1982), 19-45.
\bibitem[Fernandez1986]{Fernandez1986} M. Fern\'andez, A classification of Riemannian manifolds with structure group Spin(7). Ann. Mat. Pura Appl. 143(1986), 101-122.
\bibitem[FL21]{FL2021} D. Fiorenza  and   H. V. L\^e,  Formally integrable complex structures on higher dimensional knot spaces,  J. of Sympletic Geometry, Volume 19 (2021) Number 3, 507-529.
\bibitem[FKMS97]{FKMS1997} T. Friedrich, I. Kath, A. Moroianu, U. Semmelmann, On nearly parallel $\G$-structures.
J. Geom. Phys. 23 (1997), 259-286.
\bibitem[FN56a]{FN1956a} A. Fr\"olicher, A. Nijenhuis,  Theory of vector-valued differential forms. I. Derivations of the graded ring
of differential forms. Indag. Math. 18, 338-359 (1956).
\bibitem[FN56b]{FN1956b} A. Fr\"olicher, A. Nijenhuis,  Some new cohomology invariants for complex manifolds. I, II. Indag. Math.
18(540-552), 553-564 (1956).
\bibitem[GG1970]{GG1970} A Gray and P. Green, Sphere 
transitive structures  and triality automorphisms, Pacific J. Math. 34 (1970), 83-96.
\bibitem[Gilkey01]{Gilkey2001} P. B. Gilkey, Geometric Properties of Natural Operators Defined by the Riemann Curvature
Tensor. World Scientific Publishing Co., Singapore, New Jersey, London, Hong Kong, 2001.
\bibitem[Gray69]{Gray1969} A. Gray, Vector cross products on manifolds, Trans. Amer. Math. Soc. 141 (1969) 465-504.
\bibitem[GH80]{GH1980} A. Gray and L.M. Hervella, The Sixteen Classes of Almost Hermitian Manifolds 
and Their Linear Invariants,Annali di Matematica pura ed applicata 123, 35–58 (1980). 
\bibitem[GK02]{GK02}S. Gudmundsson, E. Kappos, On the Geometry of Tangent Bundles, Expo. Math. 20 (2002), 1-41.
\bibitem[GS02]{GS2002}  S. Gukov and J. Sparks, M theory on Spin(7) manifolds. 1, Nucl. Phys. B 625 (2002) 3.
\bibitem[HL82]{HL1982} R. Harvey, H.B. Lawson, Calibrated geometry, Acta Math. 148 (1982) 47-157.
\bibitem[Helgason78]{Helgason1978} S. Helgason,  Differential Geometry, Lie  Groups and Symmetric  Spaces, Academic Press, 1978.
\bibitem[Hitchin00]{Hitchin2000} N. Hitchin, The geometry of three-forms in 6 and 7 dimensions, J. Differential Geom., vol. 55 (2000), 
547-576.
\bibitem[Joyce00]{Joyce2000} D. D. Joyce, Compact manifolds with Special Holonomy, Oxford University Press, 2000.
\bibitem[Joyce07]{Joyce2007} D. D. Joyce, Riemannian holonomy  groups  and calibrated  geometry, Oxford Univ. Press, 2007.
\bibitem[KLS18]{KLS2018}K. Kawai, H. V. L\^e, L. Schwachh\"ofer, The Fr\"olicher-Nijenhuis bracket and the geometry of $\G$-and $\Spin(7)$-manifolds, Annali di Matematica 197(2018), 411-432.
\bibitem[KMS93]{KMS1993} I. Kolar, P. W. Michor and J. Slovak, Natural operators in differential geometry, Springer, 1993.
\bibitem[KN63]{KN1963} S. Kobayashi   and K. Nomidzu,  Foundations of differential Geometry,  vol. 1, Intersciences  Publishers,   1963.
\bibitem[KN69]{KN1969} S. Kobayashi   and K. Nomidzu,  Foundations of differential Geometry,  vol. 2, Intersciences  Publishers,   1969.
\bibitem[LM89]{LM89}H. B. Lawson and M.-L. Michelson, Spin Geometry, Princeton University Press, Princeton, 1989. 
\bibitem[LeBrun84]{LeBrun1984} C. LeBrun,  Twistor CR manifolds and three-dimensional conformal geometry,
Trans. Amer. Math. Soc 284 (1984) 601-616.
\bibitem[LeBrun93]{LeBrun1993} C. LeBrun,  A K\"ahler structure on the space of string worldsheets, Classical Quantum Gravity 10 (1993), no. 9, L141-L148.
\bibitem[Lempert93]{Lempert1993} L. Lempert, Loop spaces as complex manifolds, J. Differential Geom.
38 (1993), no. 3, 519-543.
\bibitem[LL07]{LL2007}  J. H. Lee  and N. C. Leung, Higher dimensional  knot spaces for manifolds with vector cross products, Advances in Mathematics  213(2007), 140-164.
\bibitem[LMES21]{LMES21} E. Loubeau, A. J.  Moreno, H. N.  S,  Earp,  J. Saavedra,  Harmonic $\Sp(2)$-invariant $\G$-structure  on the  $7$-sphere,  arXiv:2103.11552.
\bibitem[LPV08]{LPV2008} H. V. L\^e, M. Panak  and J. Van\v zura, Manifolds admitting stable forms. Comm. Math. Universitatis Carolinae, Vol. 49 (2008),  101-117.
\bibitem[OR85]{OR85} N. R. O'Brian and J. H. Rawnsley, Rwistor spaces, Ann. Global Anal. Geom. 3(1985), 29-58.
\bibitem[Rossi85]{Rossi1985} H. Rossi,  LeBrun's nonrealizability theorem in higher dimensions.
Duke Math. J. 52 (1985), no. 2, 457-474. 
\bibitem[Sasaki58]{Sasaki58} S. Sasaki, On the differential geometry of tangent bundles of Riemannian manifolds, Tohoku Math. J. 10 (1958), 338-354.
\bibitem[SW17]{SW2017} D. A. Salamon and T.  Walpuski, Notes on the octonions,  Proceedings of the 23rd Gokova Geometry-Topology Conference, pp. 1-85 (2017).
\bibitem[Verbitsky12]{Verbitsky2010} M. Verbitsky, A  formally  K\"ahler  structure on  a knot  space of  a $G_2$-manifold,   Sel. Math. New. Ser. (2012) 18:539-555,  arXiv:1003.3174.
\bibitem[Verbitsky11]{Verbitsky2011} M. Verbitsky,  A CR twistor space of a $G_2$-manifold, Differential Geometry and its Applications 29 (2011) 101-107.

\end{thebibliography}
\end{document}